\def\BState{\State\hskip-\ALG@thistlm}
\newtheorem{thm}{Theorem}[section]
\newtheorem{prop}[thm]{Proposition}
\newtheorem{cor}[thm]{Corollary}
\newtheorem{lem}[thm]{Lemma}
\newtheorem{claim}[thm]{Claim}
\theoremstyle{definition}
\newtheorem{defn}[thm]{Definition}
\theoremstyle{remark}
\newtheorem{quest}[thm]{Question}
\newtheorem{rem}[thm]{Remark}
\newtheorem{example}[thm]{Example}
\newcommand{\U}{ X}
\newcommand{\bU}{\overline{ X}}
\newcommand{\hU}{\partial_h{ X}}
\newcommand{\act}{\curvearrowright}
\newcommand{\p}{\mathcal P}
\newcommand{\f}{\mathscr F}
\newcommand{\PC}{\mathcal P_K(\mathscr F)}
\newcommand{\ax}{\mathrm{Ax}}
\newcommand{\diam }[1]{{\textbf{diam}\big(#1\big)}}
\newtheoremstyle{query}%
{}{}
{\color{red}}
{}
{\sffamily\bfseries}{:}{12pt}
{}
\theoremstyle{query}
\begin{document}

\title[An extreme  boundary of acylindrically hyperbolic groups]{An extreme  boundary of acylindrically hyperbolic groups}

\author{Wenyuan Yang}

\address{Beijing International Center for Mathematical Research\\
Peking University\\
 Beijing 100871, China
P.R.}
\email{wyang@math.pku.edu.cn}

\thanks{(W.Y.) Partially supported by National Key R \& D Program of China (SQ2020YFA070059) and  National Natural Science Foundation of China (No. 12131009 and No.12326601)}
 
\subjclass[2000]{Primary 20F65, 20F67, 37D40,46L35}

\date{\today}

\dedicatory{}

\keywords{Extreme proximity, $C^\ast$-selfless, horofunction boundary, north-south dynamics}

\begin{abstract} 
We prove that every acylindrically hyperbolic group admits a minimal and  extremely proximal  action on a compact metrizable space. If there are no nontrivial finite normal subgroups, then the action is topologically free. This answers positively a question of Ozawa and the applications to $C^\ast$-algebras are discussed. 

\end{abstract}

\maketitle

\section{Introduction}

Let $G$ be a discrete group. A \textit{compact $G$-space $Z$} is a compact topological space $Z$ equipped with a continuous action of $G$ by homeomorphism. It is called \textit{minimal}   if any $G$-orbit is dense in $Z$, and  \textit{strongly proximal} if  any $G$-orbit of a probability measure on $Z$ contains a Dirac measure in its weak closure. A compact $G$-space that is both minimal and strongly proximal is called \textit{$G$-boundary} in the sense of  Furstenberg. Equivalently, $Z$ is a $G$-boundary if, when identified with the set of Dirac measures, it is the unique minimal closed $G$-invariant subset of the compact space of probability measures on $Z$.   The \textit{Furstenberg  boundary} is a universal $G$-boundary so that every $G$-boundary is a continuous $G$-equivariant quotient of it.  We refer to \cite{Oza14} for more details.
\begin{defn}
A compact $G$-space $Z$ is called \textit{extremely proximal} in \cite{Gla74}  if for any compact set $F\neq Z$ and non-empty open set $O$ there is an element $g\in G$ such that $gF\subset O$. A minimal and extremely proximal $G$-space is called \textit{extreme boundary}  (or \textit{strong boundary} in \cite{LS96}) for $G$. 
\end{defn} 

Topological dynamics  on the $G$-boundary have important implications to $C^\ast$-algebra properties. 
A discrete group $G$ is said to be \textit{$C^\ast$-simple} if the reduced group $C^\ast$-algebra $C^\ast_r(G)$ is simple, i.e., has no non-trivial two-sided closed ideals.  A compact $G$-space $Z$  is \textit{topologically free} in \cite{LS96} (or \textit{slender} in \cite{HJ11}) if  the set of points with trivial stabilizer called \textit{free} points is  dense in $Z$. In \cite{KK17}, Kalantar and Kennedy  characterized  the $C^\ast$-simplicity by the topological freeness of the $G$-action on Furstenberg boundary. By the universality, it suffices to find one topological free $G$-boundary to validate the $C^\ast$-simplicity. 

Recently, Robert \cite{Ro25} introduced the notion of a $C^\ast$-selfless property, which among others implies $C^\ast$-simplicity, stable rank one, and strict comparison. This property has attracted considerable recent interest among researchers, e.g. the breakthrough work of Amrutam, Gao, Kunnawalkam Elayavalli, and Patchell in \cite{AGKEP}. Specifically, under the assumption of the rapid decay property, they proved that every acylindrically hyperbolic group without nontrivial finite kernel is $C^\ast$-selfless. Whether rapid decay property could be dropped was an open question. Our work is motivated by the following recent theorem of Ozawa \cite{Oza25}.

\begin{thm}\cite[Theorem 1]{Oza25}\label{Thm: Ozawa}
An infinite countable discrete group $G$ having a topologically-free extreme
boundary is $C^\ast$-selfless.    
\end{thm}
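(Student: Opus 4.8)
\smallskip
\noindent\emph{A proof proposal (the argument is not carried out in this note).}

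The plan is to reduce the statement to Robert's characterization of selflessness of a separable $C^\ast$-algebra $A$: it suffices to check that $A$ has no nonzero finite-dimensional representation and that $A$ meets Robert's positive criterion, which (in one of its equivalent forms) asks that $A$ contain, asymptotically centrally and approximately mutually orthogonally, arbitrarily many copies of ``enough of $A$'' --- concretely, that a suitable model algebra embed into the relative commutant $A^\omega\cap A'$ with the normalization demanded in \cite{Ro25}. For $A=C^\ast_r(G)$ the first condition is immediate: a minimal extremely proximal action is strongly proximal \cite{Gla74}, so a topologically free extreme boundary is in particular a topologically free $G$-boundary, hence $G$ is $C^\ast$-simple (a countable group admitting a topologically free boundary action is $C^\ast$-simple; cf.\ \cite{KK17}); thus $C^\ast_r(G)$ is simple, and since $G$ is infinite it is infinite-dimensional, so it admits no finite-dimensional representation. (The same input also forces $Z$ to be infinite: a finite minimal $G$-set would give $G$ a nontrivial finite-index kernel and hence no free point.)

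The substance of the proof is to distill from the extremely proximal dynamics the $C^\ast$-algebraic room required by Robert's criterion. I would set up the dynamical picture as follows: given a finite $F\subseteq G$, $\varepsilon>0$ and $n\in\mathbb N$, fix a free point $z_0\in Z$, a small open neighbourhood $U$ of $z_0$ with $fU\cap U=\emptyset$ for all $f\in F\setminus\{e\}$, and the proper compact set $K:=Z\setminus U$; then choose pairwise disjoint small open sets $U_1,\dots,U_n\subseteq U$ and, using extreme proximality, elements $g_1,\dots,g_n\in G$ with $g_i K\subseteq U_i$. Thus the $g_i$ crush everything outside a small $F$-separated region into pairwise disjoint tiny pockets deep inside $U$, and a ping-pong argument turns $\{g_1,\dots,g_n\}$ into a very independent, rapidly escaping family in $G$. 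The task is then to convert this combinatorial picture into honest elements of $C^\ast_r(G)$ --- approximately orthogonal positive contractions (or partial isometries) that asymptotically commute \emph{in norm} with $F$ --- and to assemble them, as $n\to\infty$, $\varepsilon\to 0$ and $F\uparrow G$, into the embedding of the model algebra into $C^\ast_r(G)^\omega\cap C^\ast_r(G)'$ demanded by \cite{Ro25}. I expect this conversion to proceed inside the reduced crossed product $C(Z)\rtimes_{\mathrm r}G$ --- which, by extreme proximality together with topological freeness, is simple and purely infinite --- by approximating suitable elements of $C^\ast_r(G)\subseteq C(Z)\rtimes_{\mathrm r}G$ by partial isometries built from the contracting data and adapted to the $z_0$-chart, and then reading off the commutator estimates from the escaping behaviour of the $g_i$.

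The hard part will be precisely this transfer. Extreme proximality is a statement about the \emph{topological} dynamics on $Z$; on its own it governs $C(Z)$-flavoured data, whereas Robert's criterion needs \emph{operator-norm} asymptotic centrality inside $C^\ast_r(G)$ --- and note that group unitaries $\lambda_g$ can never asymptotically commute in norm, so the required elements must be genuine (infinitely supported) elements of $C^\ast_r(G)$ whose near-commutation is forced by Haagerup-type estimates in the regular representation. Topological freeness is exactly what bridges the two worlds: at the free point $z_0$ the quasi-regular representation on $\ell^2(Gz_0)\cong\ell^2(G)$ is (equivalent to) the regular one and $C(Z)$ acts faithfully on the same space, so estimates performed in the $C(Z)$-picture genuinely detect $C^\ast_r(G)$, and it is under this hypothesis that a Powers/Haagerup-type averaging promotes ``small on a vector'' to ``small in norm.'' A secondary, essentially bookkeeping, issue is to make the approximate copies coherent in $F$, $\varepsilon$ and $n$ so that one obtains an honest $\ast$-homomorphism into the ultrapower relative commutant rather than merely an approximate one, and to match the exact normalization of \cite{Ro25}; simplicity of $C^\ast_r(G)$ supplies fullness for free.
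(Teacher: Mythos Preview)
This statement is not proved in the paper at all: it is quoted verbatim as \cite[Theorem 1]{Oza25} and used as a black box to derive Corollary \ref{MainCorollary} from Theorem \ref{MainThm}. There is therefore no ``paper's own proof'' to compare your proposal against.

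Your write-up is explicitly a plan rather than a proof, and you say so in the first line. As a roadmap toward Ozawa's result it is broadly sensible --- the reduction to Robert's criterion, the use of extreme proximality to produce families $g_1,\dots,g_n$ compressing $Z\setminus U$ into disjoint pockets of a free neighbourhood $U$, and the passage through the crossed product $C(Z)\rtimes_{\mathrm r}G$ with topological freeness supplying the link to $C^\ast_r(G)$ are all natural moves. But the central step you yourself flag --- promoting the topological compression data to genuine norm-asymptotically-central elements of $C^\ast_r(G)$ satisfying Robert's normalization --- is left entirely unspecified, and that is where the actual content of Ozawa's argument lives. As written this is an outline of where a proof should go, not a proof; for the purposes of this note the correct move is simply to cite \cite{Oza25}, as the paper does.
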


The class of acylindrically hyperbolic groups was introduced by Osin \cite{Osin6} (see \cite{DGO}) and have been intensively investigated in the last decade. See Definition \ref{defn: acylindrical action}. We refer to the survey \cite{OsinICM} for an overview of numerous examples and recent developments. In view of Theorem \ref{Thm: Ozawa}, Ozawa raised the question in  \cite{Oza25} whether every acylindrically hyperbolic group admits an extreme boundary. Many geometrically occurring groups   admit extreme boundaries; just to name a few, Gromov boundary of  hyperbolic groups,  Bowditch boundary of relatively hyperbolic groups, the limit set of convergence groups (\cite{Sun19}),  the visual boundary of CAT(0) groups with rank-one elements (\cite{Ballmann, H09a}), Thurston boundary of mapping class groups (\cite{thurston}), and the Outer space boundary  of $Out(F_n)$ (\cite{LL03}).  Crucially, each of these examples contains elements with north‑south dynamics (see Lemma \ref{loxo North-South dynamics}). A general answer had remained unknown; the main result of this note answers Ozawa’s question affirmatively.

\begin{thm}\label{MainThm}
Let $G$ be a non-elementary acylindrically hyperbolic group.  Then $G$ admits a minimal,  and extremely proximal boundary action on a compact metrizable space. If  $G$ contains no  nontrivial finite normal subgroup, then the action is topologically free.     
\end{thm}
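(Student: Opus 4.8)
The plan is to realize the extreme boundary as a suitable subset of the horofunction boundary $\hU$ of a Cayley graph $\mathrm X = \mathscr G(G, S)$ with respect to a well-chosen (possibly infinite) generating set. Since $G$ is acylindrically hyperbolic, a theorem of Osin provides a (possibly infinite) generating set $S$ for which $\mathrm X$ is hyperbolic, non-elementary, and the action $G \act \mathrm X$ is acylindrical and contains a loxodromic WPD element; equivalently $G$ is non-elementarily hyperbolic relative to a suitable hyperbolically embedded subgroup, or one may pass to the associated projection-complex / hyperbolic model. The first step is to fix such $\mathrm X$ and to consider the \emph{reduced} horofunction boundary, i.e.\ the closure $\Lambda(Go) \subseteq \hU$ of the orbit of the basepoint together with, more importantly, the subset of horofunctions that are limits along sequences converging to the Gromov boundary $\partial \mathrm X$. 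Let $Z$ be the smallest closed $G$-invariant subset of this compact metrizable space containing the loxodromic fixed points; minimality of $Z$ follows because every loxodromic element has attracting/repelling fixed points in $Z$ and the loxodromic fixed point pairs are dense (this uses non-elementarity and acylindricity, cf.\ the north-south dynamics of loxodromics on $\partial \mathrm X$ and its lift to horofunctions).

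The second and central step is \textbf{extreme proximality}. Given a compact $F \subsetneq Z$ and a non-empty open $O \subseteq Z$, I want an element $g$ with $gF \subseteq O$. The mechanism is north-south dynamics: a loxodromic WPD element $h$ acts on $Z$ (or on a $G$-equivariant quotient of it) with a unique attracting point $h^+$ and repelling point $h^-$, and high powers $h^n$ contract the complement of any neighborhood of $h^-$ into any neighborhood of $h^+$. To conclude, I first move $O$ so that it contains a point $z_0 \in Z$ of loxodromic type, pick a loxodromic $h$ with $h^+$ close to $z_0$ (density of loxodromic fixed points), and then use a group element $a$ to move $F$ off the repelling point $h^-$ — this is possible precisely because $F \neq Z$ and $Z$ is minimal, so $G \cdot (Z \setminus F)$ is all of $Z$ and in particular some translate $aF$ avoids a neighborhood of $h^-$. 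Then $h^n a F \subseteq O$ for $n$ large. The delicate point here is establishing \emph{uniform} north-south dynamics on the horofunction-type boundary $Z$ rather than just on $\partial \mathrm X$: horofunctions record more than the endpoint, so I must check that the ``Busemann cocycle'' contribution is also controlled under $h^n$, which is where acylindricity (uniform WPD constants) enters. I expect \emph{this uniformity} to be the main obstacle, and it is likely handled by the standard contraction/divergence estimates for WPD or contracting elements together with properness of the reduced horofunction boundary along Gromov-convergent sequences.

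The third step is \textbf{topological freeness} under the hypothesis of no nontrivial finite normal subgroup. By a theorem of Dahmani--Guirardel--Osin, an acylindrically hyperbolic group with no finite normal subgroup has trivial ``finite radical'', and more relevantly it contains infinitely many ``independent'' loxodromic elements generating free products and acting with disjoint fixed point sets. A point $z \in Z$ has nontrivial stabilizer only if some $g \neq 1$ fixes $z$; an infinite-order $g$ would force $z$ to be one of its two loxodromic fixed points, and a torsion or non-loxodromic $g$ fixing a loxodromic-type horofunction can be ruled out by acylindricity (bounded coarse stabilizers of quasigeodesics). Thus the non-free points form a countable union of pairs of loxodromic fixed points, which is meager; since $Z$ is a compact metrizable space without isolated points (minimality plus non-elementarity), the free points are dense by Baire. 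One must check $Z$ is perfect — this follows from minimality together with $|Z| = \infty$. Finally, if $G$ \emph{does} have a nontrivial finite normal subgroup $N$, the action on $Z$ factors through $G/N$ which is again acylindrically hyperbolic without finite normal subgroup (or at least retains minimality and extreme proximality), giving the first assertion of the theorem in general; extreme proximality is clearly inherited.

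In summary: (1) choose the acylindrical hyperbolic action and form $Z$ inside the reduced horofunction boundary; (2) prove minimality via density of loxodromic fixed points; (3) prove extreme proximality via uniform north-south dynamics, using $F \neq Z$ and minimality to clear the repelling point — this is the crux; (4) deduce topological freeness from the Baire category argument on the meager set of loxodromic fixed points when there is no finite normal subgroup, and reduce to this case by passing to $G/N$ otherwise. Then Theorem~\ref{Thm: Ozawa} applies to give the $C^\ast$-selfless conclusion stated in the discussion.
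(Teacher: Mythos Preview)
Your overall strategy---build the extreme boundary inside the horofunction compactification of a hyperbolic model and use north-south dynamics of loxodromics---is the same as the paper's. But there is a genuine gap at exactly the point you flag as ``the crux'', and it is not the uniformity issue you suspect.

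The problem is that a loxodromic $h$ does \emph{not}, in general, have a unique attracting horofunction. The local minimum map $\Pi:\partial_h^\infty X\to\partial X$ is typically many-to-one: the preimage $\Pi^{-1}(h^+)$ is an entire finite-difference class $[h^+]$ of horofunctions, preserved by $h$ only as a set. North-south dynamics on $\overline X_h$ then gives you $h^n F\to [h^+]$, not $h^n F\to\{\text{point}\}$; so if your open set $O$ meets $[h^+]$ without containing all of it, you cannot conclude $h^n F\subset O$. Your appeal to ``standard contraction/divergence estimates for WPD elements'' does not resolve this: those estimates control the Gromov-boundary endpoint, not the particular horofunction representing it.

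The paper's fix is to abandon the generic Cayley-graph model and pass to the Bestvina--Bromberg--Fujiwara projection complex $\mathcal X$, whose structural lemmas (bounded geodesic image, forcing lemma) allow one to \emph{construct} specific loxodromics $g=hf^n$ whose fixed classes $[g^\pm]$ are provably singletons (Proposition~\ref{prop: minimal loxodromic elements}). Only for these ``minimal'' loxodromics is genuine north-south dynamics on $\overline{\mathcal X}_h$ available (Lemma~\ref{lem: minimal loxodromic with ns dynamics}), and $\partial G$ is then defined as the closure of \emph{their} fixed points, not of all loxodromic fixed points. Your argument as written has no mechanism to produce such elements.

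For topological freeness your Baire sketch also has a gap: you assert that an elliptic $g$ fixing a ``loxodromic-type horofunction'' is ruled out by acylindricity, but this is not immediate---elliptic elements in acylindrical actions can have unbounded coarse-fixed-point sets along rays. The paper instead works with Myrberg limit points: it shows $\Pi^{-1}$ is injective over them (again using the projection-complex structure), and then argues directly that a Myrberg point cannot be fixed by any nontrivial element when the finite radical is trivial, using that the axis family $\{g\cdot\mathrm{ax}(f)\}$ has bounded intersection for a well-chosen $f$ with $E(f)=\langle f\rangle$.
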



It is a standard fact that if   some  element in $G$ admits north–south dynamics on a minimal $G$-space $Z$, then $Z$ must be an extremely proximal boundary. In what follows, we will explain the challenges in finding such elements and outline our approach.

\subsection*{The construction of extreme boundary.} By definition, every acylindrically hyperbolic group $G$ acts  non-elementarily on a (typically non-locally compact) hyperbolic space $X$. It is well-known that  every loxodromic element in $G$ exhibits north–south dynamics on the Gromov boundary $\partial X$. However, $\partial X$ is generally not compact since $X$ is not locally compact. On the other hand, Gromov introduced  the \textit{horofunction compactification} that applies to any metric space and does yield a compact space $\overline{X}_h:=X\cup \partial_h X$ (in which $X$ is not necessarily open though). An explicit description of the resulting space is notoriously difficult to obtain in concrete examples, and the $G$-action on $\overline{X}_h$ can be pathological: it is generally neither minimal nor does it satisfy the north--south property for loxodromic isometries. The proof of Theorem \ref{MainThm} lies in overcoming these by producing a compact extreme boundary inside $\overline{X}_h$ for a well-chosen candidate $X$. Let us explain the construction as follows:
\begin{enumerate}
\item We replace $X$ with Bestvina-Bromberg-Fujiwara's projection complex $\mathcal X$ constructed from a $G$-invariant family of loxodromic axis in $X$. See \textsection \ref{subsec: projection complex}.
\item By the work of Maher-Tiozzo \cite{MT}, there is a \textit{local minimum} map $\Pi$ (as described in Lemma \ref{localminimalmap}) from  the horofunction compactification $\overline{ \mathcal X}_h$ to the union of  $ \mathcal X$ and its Gromov boundary $\partial   \mathcal X$. \begin{enumerate}[label=(\roman*)]
    \item 
    The $\Pi$-preimage of $\partial   \mathcal X$ consists of  the horofunctions $\phi$ with minimum at $-\infty$: $\inf_{x\in X} \phi(x)=-\infty$, which we denote by $\partial_h^\infty \mathcal X$.  
    \item 
    The map $\Pi$ is   continuous on $\partial_h^\infty X$,  but sends  horofunctions in  $\overline{ \mathcal X}_h\setminus \partial_h^\infty \mathcal X$ to uniformly bounded (usually non-singleton) subsets in ${ \mathcal X}$. See Example \ref{example-horofunctions}.
\end{enumerate}

\item It is useful to partition the horofunction boundary $\partial_h^\infty X$ into  the finite difference classes, denoted as $[\xi]$ with $\xi\in \partial_h^\infty X$, in which any two horofunctions have finite $L^\infty$-difference. In \cite{YANG22}, a general theory of topological dynamics on the horofunction boundary could be obtained modulo such a finite difference partition. For instance, loxodromic elements (strongly contracting elements there) $g\in G$ admit  north-south dynamics relative to the two fixed classes denoted by $[g^-], [g^+]$. The real challenge is that whether these fixed classes could be made singleton for \textit{certain} elements. See \cite[Example 5.17]{MWY25B} for cubical actions of free groups where certain loxodromic elements have non-singleton fixed classes. 

Now, using favorable properties on quasi-trees  from \cite{BBF} (recalled in \textsection \ref{subsec: projection complex}), we  construct specific loxodromic elements $g \in G$ so that the fixed classes are singletons: $[g^-]=\{g^-\},[g^+]=\{g^+\}$. Moreover, the north–south dynamics on the Gromov boundary could be lifted to the horofunction compactification $\overline{\mathcal{X}}_h$ via  the local minimum map. The desired extreme boundary in Theorem \ref{MainThm} is then obtained as the closure of the $G$-orbit of the fixed points $g^\pm$  of $g$ in $\overline{\mathcal{X}}_h$.  See \textsection \ref{subsec: horofunction boundary of projection complex}.
\end{enumerate}

In passing, we consider the following illustrative example of the horofunction compactification.
\begin{example}\label{example-horofunctions}
Let $X$ be a hyperbolic graph  as depicted in Figure \ref{fig:non-singleton infimum}. There is a map from $X$ to the wedge graph of infinitely many rays, so that each 4-circle   $(o,x,y,z_n)$ in $X$ is sent to the origin identified. Each ray in $X$ determines a horofunction  $b_{\xi_n}$. The sequence of $b_{\xi_n}$ tends to a horofunction $b_\eta$ which admits finite infimum:  $b_\eta (x)= b_\eta(y) = -1 =\inf b_\eta$ and $b_\eta(z)\ge 0$ for any $z\in X\setminus \{x,y\}$.  Thus, $b_\eta$ lies in $\overline{X}_h\setminus (X\cup \partial_h^\infty X)$. In fact, $\overline{X}_h= \{b_\eta\} \cup X\cup \partial_h^\infty X$, where $X$ comprises precisely the isolated points.  Further, $\Pi: \overline{X}_h \to X\cup \partial X$ sends $b_\eta$ to $\{x,y\}$ and acts as an identification map elsewhere.
\begin{figure}
    \centering

\tikzset{every picture/.style={line width=0.75pt}} 

\begin{tikzpicture}[x=0.75pt,y=0.75pt,yscale=-1,xscale=1]

\draw    (222.67,189) -- (175.25,222.5) ;
\draw    (222.67,202.33) -- (222.67,189) ;
\draw    (175.25,222.5) -- (242,195.67) ;
\draw    (222,222.5) -- (242,195.67) ;
\draw  [dash pattern={on 4.5pt off 4.5pt}]  (222,222.5) -- (222,207) ;
\draw    (175.25,222.5) -- (199.33,261) ;
\draw [shift={(199.33,261)}, rotate = 57.97] [color={rgb, 255:red, 0; green, 0; blue, 0 }  ][fill={rgb, 255:red, 0; green, 0; blue, 0 }  ][line width=0.75]      (0, 0) circle [x radius= 3.35, y radius= 3.35]   ;
\draw [shift={(175.25,222.5)}, rotate = 57.97] [color={rgb, 255:red, 0; green, 0; blue, 0 }  ][fill={rgb, 255:red, 0; green, 0; blue, 0 }  ][line width=0.75]      (0, 0) circle [x radius= 3.35, y radius= 3.35]   ;
\draw    (199.33,261) -- (222,222.5) ;
\draw [shift={(222,222.5)}, rotate = 300.49] [color={rgb, 255:red, 0; green, 0; blue, 0 }  ][fill={rgb, 255:red, 0; green, 0; blue, 0 }  ][line width=0.75]      (0, 0) circle [x radius= 3.35, y radius= 3.35]   ;
\draw    (175.25,222.5) -- (202.67,182.33) ;
\draw    (202.67,182.33) -- (210.67,200.33) ;
\draw  [dash pattern={on 4.5pt off 4.5pt}]  (210.67,200.33) -- (222,222.5) ;
\draw    (167.33,183.67) -- (175.25,222.5) ;
\draw    (167.33,183.67) -- (192.67,201) ;
\draw  [dash pattern={on 4.5pt off 4.5pt}]  (192.67,201) -- (222,222.5) ;
\draw    (242,195.67) -- (246.5,116) ;
\draw [shift={(246.67,113)}, rotate = 93.23] [fill={rgb, 255:red, 0; green, 0; blue, 0 }  ][line width=0.08]  [draw opacity=0] (8.93,-4.29) -- (0,0) -- (8.93,4.29) -- cycle    ;
\draw [shift={(242,195.67)}, rotate = 273.23] [color={rgb, 255:red, 0; green, 0; blue, 0 }  ][fill={rgb, 255:red, 0; green, 0; blue, 0 }  ][line width=0.75]      (0, 0) circle [x radius= 3.35, y radius= 3.35]   ;
\draw    (222.67,189) -- (225.23,114.66) ;
\draw [shift={(225.33,111.67)}, rotate = 91.97] [fill={rgb, 255:red, 0; green, 0; blue, 0 }  ][line width=0.08]  [draw opacity=0] (8.93,-4.29) -- (0,0) -- (8.93,4.29) -- cycle    ;
\draw [shift={(222.67,189)}, rotate = 271.97] [color={rgb, 255:red, 0; green, 0; blue, 0 }  ][fill={rgb, 255:red, 0; green, 0; blue, 0 }  ][line width=0.75]      (0, 0) circle [x radius= 3.35, y radius= 3.35]   ;
\draw    (202.67,182.33) -- (205.22,114.66) ;
\draw [shift={(205.33,111.67)}, rotate = 92.16] [fill={rgb, 255:red, 0; green, 0; blue, 0 }  ][line width=0.08]  [draw opacity=0] (8.93,-4.29) -- (0,0) -- (8.93,4.29) -- cycle    ;
\draw [shift={(202.67,182.33)}, rotate = 272.16] [color={rgb, 255:red, 0; green, 0; blue, 0 }  ][fill={rgb, 255:red, 0; green, 0; blue, 0 }  ][line width=0.75]      (0, 0) circle [x radius= 3.35, y radius= 3.35]   ;
\draw    (167.33,183.67) -- (167.97,114) ;
\draw [shift={(168,111)}, rotate = 90.53] [fill={rgb, 255:red, 0; green, 0; blue, 0 }  ][line width=0.08]  [draw opacity=0] (8.93,-4.29) -- (0,0) -- (8.93,4.29) -- cycle    ;
\draw [shift={(167.33,183.67)}, rotate = 270.53] [color={rgb, 255:red, 0; green, 0; blue, 0 }  ][fill={rgb, 255:red, 0; green, 0; blue, 0 }  ][line width=0.75]      (0, 0) circle [x radius= 3.35, y radius= 3.35]   ;
\draw  [dash pattern={on 0.84pt off 2.51pt}]  (174.67,161.67) -- (191.33,161.67) ;

\draw (194.67,263.4) node [anchor=north west][inner sep=0.75pt]    {$o$};
\draw (226,217.07) node [anchor=north west][inner sep=0.75pt]    {$x$};
\draw (162,216.73) node [anchor=north west][inner sep=0.75pt]    {$y$};
\draw (246,186.73) node [anchor=north west][inner sep=0.75pt]    {$z_{1}$};
\draw (121.33,171.4) node [anchor=north west][inner sep=0.75pt]    {$\eta \leftarrow z_{n}$};
\draw (186,169.73) node [anchor=north west][inner sep=0.75pt]    {$z_{3}$};
\draw (224,175.07) node [anchor=north west][inner sep=0.75pt]    {$z_{2}$};
\draw (242,96.4) node [anchor=north west][inner sep=0.75pt]    {$\xi _{1}$};
\draw (218.67,95.73) node [anchor=north west][inner sep=0.75pt]    {$\xi _{2}$};
\draw (198,96.4) node [anchor=north west][inner sep=0.75pt]    {$\xi _{3}$};
\draw (128.67,95.73) node [anchor=north west][inner sep=0.75pt]    {$\eta \leftarrow \xi _{n}$};

\end{tikzpicture}
    \caption{Example of horofunctions with infimum realized at non-singleton sets}
    \label{fig:non-singleton infimum}
\end{figure}    
\end{example}

\subsection*{Extreme boundary from curve graphs and coned-off Cayley graphs.} Many of the features we utilize in the projection complex are, in fact, already present in curve graphs of closed surfaces and coned-off Cayley graphs of relatively hyperbolic groups. In Section \ref{sec: extreme bdry for curves and coned-off}, we carry out the above construction for their horofunction compactifications and document the following finer structural properties.

Let $G$ be the orientation-preserving mapping class group of a closed oriented surface $\Sigma$ with genus greater than $2$. The curve graph $\mathcal C(\Sigma)$ of $\Sigma$ is a Gromov hyperbolic space on which $G$ acts  non-elementarily and acylindrically \cite{MinM2,Bow08}. 

\begin{thm}\label{thm: intro curve graph boundary}(=\ref{thm: curve graph boundary})
There exists  a unique $G$-minimal closed subset (denoted as $\partial G$) in the horofunction compactification $\overline{\mathcal C(\Sigma)}_h$    so that $\partial G\cap \mathcal C(\Sigma)$ is exactly the set of all non-separating curves. Moreover, the local minimum map   restricts to be injective over the Myrberg limit set in $\partial \mathcal C(\Sigma)$.       
\end{thm}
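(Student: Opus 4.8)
The plan is to apply the construction of \textsection\ref{subsec: horofunction boundary of projection complex} directly to the curve graph $\mathcal C(\Sigma)$ itself --- it is hyperbolic and carries a non-elementary acylindrical $G$-action --- obtaining a pseudo-Anosov $g\in G$ with north--south dynamics on $\overline{\mathcal C(\Sigma)}_h$ and fixed horofunctions $g^+,g^-\in\partial_h^\infty\mathcal C(\Sigma)$, and setting $\partial G:=\overline{G\cdot\{g^+,g^-\}}$, which is minimal and extremely proximal by the proof of Theorem~\ref{MainThm}. Three things remain. For the \emph{uniqueness} of $\partial G$ among $G$-minimal closed subsets of $\overline{\mathcal C(\Sigma)}_h$, it suffices to show every nonempty closed $G$-invariant $Y\subseteq\overline{\mathcal C(\Sigma)}_h$ contains $\partial G$. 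Take a minimal $M\subseteq Y$. First, $M$ meets $\partial_h^\infty\mathcal C(\Sigma)$: if some $\phi\in M$ has infimum $-\infty$ we are done; otherwise $\Pi(\phi)$ is bounded near a vertex $v$ and $\phi-b_v$ is bounded (finite difference, cf.~\cite{YANG22} and Lemma~\ref{localminimalmap}), where $b_v$ is the horofunction of $v$; choosing a loxodromic $f\in G$ with axis near $v$ and passing to a subsequential limit of $(f^n\phi)_n$, the pointwise convergence $b_{f^nv}\to b_{\xi_f^+}$ (with $\xi_f^+$ the attracting endpoint of $f$) produces a point of $M$ at bounded difference from the Busemann function $b_{\xi_f^+}$, hence in $\partial_h^\infty\mathcal C(\Sigma)$. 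Now with $\phi\in M\cap\partial_h^\infty\mathcal C(\Sigma)$: either $\phi=g^-$, or north--south dynamics gives $g^n\phi\to g^+$; either way $g^\pm\in M$, so $\partial G=\overline{G\cdot\{g^+,g^-\}}\subseteq M$ and minimality forces $M=\partial G$.

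For the identification of $\partial G\cap\mathcal C(\Sigma)$, pick a non-separating curve $c_0$ with $d(c_0,\mathrm{Ax}(g))\le 1$ (every curve is disjoint from some non-separating one). Then $(g^kc_0)_k$ is a quasi-geodesic of non-separating curves and, applying north--south dynamics to $b_{c_0}$, we get $g^\pm=\lim_{k\to\pm\infty}b_{g^kc_0}$. Writing $N=\{b_w:w\ \text{non-separating}\}$, a $G$-invariant subset of $\overline{\mathcal C(\Sigma)}_h$, this gives $g^\pm\in\overline N$ and hence $\partial G=\overline{G\cdot\{g^\pm\}}\subseteq\overline N$; so $\partial G\cap\mathcal C(\Sigma)\subseteq\overline N\cap\mathcal C(\Sigma)$. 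The latter equals $N$: if $v$ is separating with $\Sigma\setminus v=\Sigma_1\sqcup\Sigma_2$, the subgraph of $\mathcal C(\Sigma)$ on the link of $v$ is the join $\mathcal C(\Sigma_1)\ast\mathcal C(\Sigma_2)$, and for curves $z_i\in\mathcal C(\Sigma_i)$ (both pieces have positive genus) a sequence $b_{w_n}\to b_v$ would force $v$ onto geodesics $[z_1,w_n]$ and $[z_2,w_n]$ for large $n$, hence force the neighbour of $v$ along $[v,w_n]$ to lie in both $\mathcal C(\Sigma_1)$ and $\mathcal C(\Sigma_2)$ --- impossible; so $b_v\notin\overline N$. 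For the reverse inclusion it suffices, by transitivity of $G$ on non-separating curves and $G$-invariance of $\partial G$, to put one $b_v$ ($v$ non-separating) into $\partial G=\overline{G\cdot g^+}$. Here the link of $v$ is $\mathcal C(\Sigma\setminus v)$, an infinite-diameter hyperbolic space with a non-elementary $\mathrm{Stab}_G(v)$-action, and the attracting fixed points of conjugates of $g$ are dense in $\partial\mathcal C(\Sigma)$: for each finite set $F$ of curves one chooses a conjugate $g_1=hgh^{-1}$ with quasi-axis $(g_1^kv)_k$ through $v$ whose attracting ray leaves $v$ in a direction diverging from every member of $F$; then $g_1^+=hg^+\in\partial G$, and evaluating $g_1^+=\lim_kb_{g_1^kv}$ at each $z\in F$ gives $g_1^+(z)=b_v(z)$. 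Letting $F$ exhaust the curves yields $g_1^+\to b_v$, so $b_v\in\partial G$, and $\partial G\cap\mathcal C(\Sigma)=N$.

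For the injectivity of $\Pi$ over Myrberg points, let $\xi$ be a Myrberg point and $\phi_1,\phi_2$ horofunctions with $\Pi(\phi_i)=\xi$. Each differs by a bounded amount from the Busemann function at $\xi$, so $\psi:=\phi_1-\phi_2$ is bounded with $\psi(o)=0$, $o$ the basepoint. The Myrberg property says the geodesic $[o,\xi)$ returns, with uniformly bounded error, into every $G$-translate of the axis of every loxodromic; choosing group elements realising these returns and passing to limits, the quasi-tree estimates of \cite{BBF} controlling Gromov products along $[o,\xi)$ force $\psi$ to be invariant under a loxodromic element, hence constant, hence identically $0$. Thus $\phi_1=\phi_2$, i.e.\ $\Pi$ has singleton fibres over the Myrberg limit set.

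The main obstacle is the inclusion $N\subseteq\partial G$: showing that non-separating curves genuinely lie in the minimal set $\partial G$ amounts to pushing the horofunction $g^+$ exactly onto a vertex, which needs a precise grip on the finite-difference classes of horofunctions on $\mathcal C(\Sigma)$ and on how geodesic rays fan out at a non-separating curve (in the spirit of \cite{YANG22}); the complementary exclusion of separating curves, by contrast, is an elementary consequence of the join structure of their links.
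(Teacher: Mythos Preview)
Your construction of $\partial G$ and its uniqueness via north--south dynamics parallel the paper's (Proposition~\ref{prop: minimal loxodromic elements} and Lemma~\ref{lem: minimal loxodromic with ns dynamics} adapted to $\mathcal C(\Sigma)$, with Masur--Minsky's bounded geodesic image theorem replacing the BBF projection-complex estimates). Your exclusion of separating curves via the join structure of their links is essentially a direct proof that separating curves are dead ends; the paper instead quotes this from Birman--Menasco and invokes Lemma~\ref{lem: dead end is isolated}, but both routes are fine.

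There are, however, two genuine gaps. First, the inclusion $\mathcal C^{ns}(\Sigma)\subseteq\partial G$: your proposed mechanism---choosing conjugates $hgh^{-1}$ whose axis passes through $v$ and ``leaves $v$ in a direction diverging from every member of $F$''---does not explain why such a direction exists in $\mathcal C(\Sigma)$ or why it forces $v$ to be a guard between each $z\in F$ and the attracting endpoint. You correctly flag this as the main obstacle, but the sketch does not overcome it. The paper's Lemma~\ref{lem: non-separating curve} uses a specific mapping-class-group tool: a \emph{partial pseudo-Anosov} $f$ supported on $S=\Sigma\setminus v$ and fixing $v$. Since $f|_S$ acts loxodromically on $\mathcal C(S)$, the subsurface projection $d_S(u,f^nu)\to\infty$ for any $u$ adjacent to $v$; then Lemma~\ref{lem: Masur Minsky BGI} forces $v$ onto every geodesic from a fixed curve $w$ to $f^n\xi$ once $n\gg0$, and Lemma~\ref{lem: accumulation point criterion} gives $f^n\xi\to v$.

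Second, your Myrberg injectivity argument is not correct. The curve graph is hyperbolic but not a quasi-tree, so the BBF estimates of \cite{BBF} do not apply to $\mathcal C(\Sigma)$ directly, and there is no mechanism by which the bounded function $\psi=\phi_1-\phi_2$ becomes invariant under a loxodromic element (nor would such invariance force $\psi$ to be constant). The paper's approach (Lemma~\ref{lem: exist K local large sequence} with Lemma~\ref{lem: guard in curve graph}) is to show that a geodesic ray to a Myrberg point contains an unbounded sequence of \emph{$K$-locally large} non-separating curves $\alpha$, meaning $d_{\Sigma\setminus\alpha}$ of the two neighbouring vertices is large. Masur--Minsky BGI then makes each such $\alpha$ a guard between any fixed $z$ and the tail of the ray, and Lemma~\ref{lem: minimal class criterion} forces any two horofunctions over $\xi$ to coincide. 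The substitute for BBF's Lemma~\ref{ForcingLem} in this setting is precisely the combination of subsurface projections and Lemma~\ref{lem: Masur Minsky BGI}; this is the missing ingredient in both of your gaps.
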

The notion of the Myrberg limit set,  due to P. Myrberg \cite{Myr31} in Fuchsian groups, forms a distinguished class of limit points and is receiving many recent interests.  In \cite{MWY25B}, certain Myrberg limit points are proved to be  free points  in the action of discrete groups on the visual boundary  of CAT(0) spaces and of Coxeter groups on the horofunction boundary. The Hausdorff dimension of Myrberg limit set has been explicitly computed for proper actions on hyperbolic spaces  (\cite{MY25}).

In this paper, we define Myrberg limit points  in the Gromov boundary for possibly non-proper actions (Definition \ref{defn: Myrberg}). The injectivity property stated in the ``moreover" part also holds for the projection complex (Lemma \ref{lem: local minimum map injective Myrberg}). This then implies that the free points appearing in topologically free actions in Theorem \ref{MainThm} are in fact preimages of Myrberg limit points in the Gromov boundary. 

Let $(G,\mathcal P)$ be a non-elementary relatively hyperbolic group, where $\mathcal P$ is a finite collection of infinite subgroups.  Let $X$ denote the Farb's \textit{coned-off Cayley graph} of $G$ with the set $\mathcal C(\mathcal P)$  of newly-added \textit{cone points} (\cite{Farb}). Denote by $\partial_B G$  the Bowditch boundary, which as a set is the union of $\mathcal C(\mathcal P)$ (a.k.a. parabolic points) and $\partial X$ (a.k.a. conical points). We refer to \cite{Bow1} and  \textsection\ref{subsec: coned-off cayley graphs} for details.
\begin{thm}\label{thm: intro coned graph boundary}(\ref{thm: coned graph boundary})
There is a unique $G$-invariant closed subset (denoted as $\partial G$) in the horofunction compactification $\bU_h$ so that $\partial G\setminus \mathcal C(\mathcal P)$ is contained in $\partial_h^\infty X$. Moreover, the identity map on $\mathcal C(\mathcal P)$ extends continuously to the local minimum map from $\partial G$  to  $\partial_B G$.  
\end{thm}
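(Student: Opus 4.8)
The plan is to transport the general construction for projection complexes, summarized in the three-step procedure of the introduction, into the setting of the coned-off Cayley graph $X$ of the relatively hyperbolic pair $(G,\mathcal P)$, and then to identify the resulting objects with the classical parabolic/conical picture of the Bowditch boundary. First I would recall from \cite{Farb} that $X$ is $\delta$-hyperbolic and that $G$ acts on it coboundedly and (by Osin--Bowditch) acylindrically away from the cone points; in particular $G$ is acylindrically hyperbolic, so Theorem \ref{MainThm} already furnishes a minimal, extremely proximal boundary $\partial G \subseteq \overline{X}_h$ arising as the closure of the $G$-orbit of the attracting/repelling pair $g^\pm$ of a suitable loxodromic $g$. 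Uniqueness of this $G$-invariant closed subset follows from minimality together with the north--south dynamics of $g$: any nonempty closed $G$-invariant set must contain both $g^-$ and $g^+$, hence all of $\partial G$, and conversely $\partial G$ is itself minimal. So the content of the theorem is the structural description of $\partial G$ in terms of $\mathcal C(\mathcal P)$ and $\partial_B G$.

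The second step is to run the local-minimum map $\Pi\colon \overline{X}_h \to X \cup \partial X$ of Lemma \ref{localminimalmap} in this concrete example and match it against the structure of the Bowditch boundary. The key geometric input here is that a cone point $c_P \in \mathcal C(\mathcal P)$ of $X$ has the property that the sets $X \setminus B(c_P, r)$ for $r \to \infty$ exhaust $X$ only \emph{outside} the coset $gP$ it comes from, so the distance function $x \mapsto d_X(c_P, x)$ is bounded by $2$ on the infinite coset $gP$; consequently each cone point \emph{already lies in $\overline X_h$} (it is a vertex of $X$, not a point at infinity, but the point is that the horofunctions detecting the parabolic directions of $gP$ are exactly those whose local-minimum set is $\{c_P\}$). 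Thus I would show: (i) the horofunctions in $\overline X_h$ whose infimum is attained and whose local-minimum set is a cone point $c_P$ form exactly the fiber of $\partial G$ over the parabolic point $c_P \in \mathcal C(\mathcal P) \subseteq \partial_B G$; (ii) every other point of $\partial G$ has infimum $-\infty$, i.e.\ lies in $\partial_h^\infty X$, which gives the asserted inclusion $\partial G \setminus \mathcal C(\mathcal P) \subseteq \partial_h^\infty X$; and (iii) on $\partial_h^\infty X$ the map $\Pi$ lands in $\partial X$ (the conical points), is continuous there by Lemma \ref{localminimalmap}(ii), is $G$-equivariant, and descends — after collapsing finite-difference classes of horofunctions as in the framework of \cite{YANG22} — to the canonical $G$-equivariant surjection $\partial X \to \partial_B G$ that realizes the Bowditch boundary as a quotient of the Gromov boundary of the coned-off graph.

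The third step is to assemble (i)--(iii) into the single continuous equivariant map $\partial G \to \partial_B G$ extending $\mathrm{id}_{\mathcal C(\mathcal P)}$. On the conical part this is the map of step (iii); on the parabolic part it sends the whole fiber over $c_P$ to $c_P$, which is compatible because the parabolic fiber in $\partial_h^\infty X$'s complement is, by construction, the $\overline X_h$-closure of the $g^\pm$-orbit points that escape into $gP$, and these are exactly the directions Bowditch collapses to the parabolic point. Continuity at a parabolic point is the delicate part and I would argue it using the definition of extreme proximity: a neighborhood basis of $c_P$ in $\partial_B G$ pulls back, via the coning construction, to a neighborhood basis in $\overline X_h$ because a sequence converging to $c_P$ in $\overline X_h$ must eventually be ``visible only through $gP$,'' forcing its $\Pi$-images to converge to $c_P$.

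\medskip

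The step I expect to be the main obstacle is \emph{continuity of the extended map at the parabolic points}, i.e.\ the interface in step three. Away from $\mathcal C(\mathcal P)$ everything is governed by the already-established continuity of $\Pi$ on $\partial_h^\infty X$ and by $G$-equivariance; but at a cone point $c_P$ one must control how horofunctions with infimum $-\infty$ (conical directions heading ``into'' the horoball of $gP$) degenerate onto the single horofunction realizing $c_P$, and this requires a quantitative statement that the only way to approach $c_P$ in $\overline X_h$ is for the Busemann cocycle to be dominated by the $d_X(c_P,\cdot)$-function on larger and larger balls — essentially a fineness/boundedness estimate for the coned-off graph in the spirit of Bowditch's fine hyperbolic graphs. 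I would isolate this as a lemma, prove it from the acylindricity of the $G$-action on $X$ together with the bounded-coset property of cone points, and then feed it into the continuity argument.
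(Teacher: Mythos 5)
Your overall strategy is the same as the paper's: rerun the construction of Proposition \ref{prop: minimal loxodromic elements} on the coned-off graph itself to get loxodromic elements with minimal fixed $[\cdot]$-classes, let $\partial G$ be the closure of the $G$-orbit of those fixed points (minimality and uniqueness then follow from north--south dynamics as in Lemma \ref{limitsetbigtwo}), and use the local minimum map to relate $\partial G$ to $\partial_B G$. One small correction at the outset: Theorem \ref{MainThm} does not "already furnish'' a boundary inside $\overline{X}_h$, because its boundary lives in the horofunction compactification of the projection complex built from $X$, not of $X$; the construction genuinely has to be redone on $\hat\Gamma(G,S)$, which is what you (and the paper) in effect propose.

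The genuine gap is in your step two. You assert the dichotomy that every point of $\partial G$ either has its infimum attained at a single cone point or lies in $\partial_h^\infty X$, but you give no mechanism ruling out the other possibilities: a horofunction in $\partial G$ with finite infimum attained on a non-singleton set, or attained at a group-element vertex. This is exactly the content of the paper's Lemma \ref{lem: no finite horofunction}, which is the technical heart of the theorem: if $\min b_\xi>-\infty$, then the minimizer cannot be a group element (finite valence forces a neighbour with strictly smaller value along the approximating geodesics), so it is a cone point $c(gP)$; and if a second minimizer $y$ existed, the approximating geodesics $[z_n,c(gP)]$ must penetrate $gP$ unboundedly, whence the BCP property (Lemma \ref{lem: BCP property}) forces every $[z_n,y]$ through $c(gP)$ and gives $b_\xi(y)=b_\xi(c(gP))+1$, a contradiction. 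This yields $\overline{X}_h=X\cup\partial_h^\infty X$ and hence $\partial G\setminus\mathcal C(\mathcal P)\subset\partial_h^\infty X$. Without it, your claims (i) and (ii) are statements of what must be proved rather than proofs. Conversely, you locate the main difficulty in continuity at the parabolic points, which the paper regards as routine once Lemma \ref{lem: no finite horofunction} and Lemma \ref{lem: cone points are not isolated} (cone points are accumulated by $\partial_h^\infty X$, via translating a ray by $gp_ng^{-1}$ and using BCP to make $c(gP)$ a guard) are in place; your proposed "visible only through $gP$'' lemma is essentially the BCP property and would work. Finally, in your step (iii) there is no surjection $\partial X\to\partial_B G$ to descend to: $\partial_B G=\mathcal C(\mathcal P)\sqcup\partial X$ as a set, and on $\partial G\setminus\mathcal C(\mathcal P)$ the required map is just $\Pi$ followed by the inclusion of $\partial X$ as the conical points.
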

 
The distinctions between the statements of Theorems \ref{thm: intro curve graph boundary} and \ref{thm: intro coned graph boundary} may not be immediately apparent. We wish to emphasize that in Theorem \ref{thm: intro curve graph boundary}, it remains unknown whether $\overline{\mathcal{C}(\Sigma)}_h \setminus \partial_h^\infty \mathcal{C}(\Sigma)$ coincides exactly with $\mathcal{C}(\Sigma)$. More precisely,
\begin{quest}\label{quest: curve graph}
On the curve graph, does there exist a horofunction $\phi:\mathcal C(\Sigma)\to \mathbb R$ so that $\phi\notin \partial_h^\infty \mathcal{C}(\Sigma)$:  the minimum
$\{x\in \mathcal C(\Sigma): \inf \phi =\phi(x)\}$ is non-singleton?
\end{quest}
If this is fortunately false, the local minimum map would be continuous, and a better description of $\partial G$ will be available as in Theorem \ref{thm: intro coned graph boundary}. In particular, this would imply that the Gromov boundary of curve graphs could be compactified by the set of non-separated essential curves under the quotient topology. This approach would yield models that are more efficient than the compact spaces in \cite{DHS17,Hamen25}, as those already include the Gromov boundary of curve graphs.

In light of Theorem \ref{MainThm}, it would be interesting to establish a compact boundary characterization for acylindrically hyperbolic groups.
Notably, geometrically finite convergence groups acting on compact spaces have been used to characterize relatively hyperbolic groups \cite{Bow98, Yaman}, a formulation that has proven useful in many applications.
For non‑compact spaces, a boundary characterization of acylindrically hyperbolic groups has already been provided in \cite{Sun19}.  

\subsection*{$C^\ast$-algebra applications.}
As fore-mentioned, we are motivated by Ozawa's Theorem \ref{Thm: Ozawa} that topologically free  extreme boundary implies the $C^\ast$-selfless. We then obtain  from Theorem \ref{MainThm}.
\begin{cor}\label{MainCorollary}
Let $G$ be a non-elementary acylindrically hyperbolic group. Then $G$ is $C^\ast$-selfless if and only if  $G$ contains no  nontrivial finite normal subgroup.     
\end{cor}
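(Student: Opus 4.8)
The plan is to derive Corollary \ref{MainCorollary} from Theorem \ref{MainThm}, Theorem \ref{Thm: Ozawa}, and a standard obstruction argument for the converse. Recall that $C^\ast$-selflessness implies $C^\ast$-simplicity, and a classical fact (going back to Powers / the work surrounding the Kalantar--Kennedy characterization) is that a $C^\ast$-simple group cannot contain a nontrivial finite normal subgroup: indeed any amenable, hence finite, normal subgroup $N\trianglelefteq G$ forces $C^\ast_r(G)$ to carry a nontrivial ideal (e.g. the kernel of the map $C^\ast_r(G)\to C^\ast_r(G/N)$ is nonzero when $N$ is finite nontrivial, using that $C^\ast_r(N)$ embeds and is finite-dimensional). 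This gives the ``only if'' direction with essentially no extra work, so I would dispatch it in one sentence with the appropriate citation.

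For the ``if'' direction, suppose $G$ is non-elementary acylindrically hyperbolic with no nontrivial finite normal subgroup. By Theorem \ref{MainThm}, $G$ admits a minimal, extremely proximal boundary action on a compact metrizable space $Z$, and under the no-finite-normal-subgroup hypothesis this action is topologically free. Since $G$ is non-elementary acylindrically hyperbolic it is in particular infinite and countable. Hence all hypotheses of Ozawa's Theorem \ref{Thm: Ozawa} are met, and we conclude $G$ is $C^\ast$-selfless. I would remark that an extreme boundary is automatically a $G$-boundary in Furstenberg's sense (extreme proximality implies strong proximality, as noted implicitly after the definition), so there is no mismatch in terminology between Theorem \ref{MainThm} and Theorem \ref{Thm: Ozawa}.

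There is essentially no obstacle here: the corollary is a direct logical combination of a cited theorem (Ozawa), the paper's own main theorem, and a textbook fact about finite normal subgroups obstructing $C^\ast$-simplicity. The only point requiring mild care is making sure the ``infinite countable'' hypothesis of Theorem \ref{Thm: Ozawa} is visibly satisfied --- which it is, since acylindrically hyperbolic groups are by definition countable (they act on a separable hyperbolic space) and non-elementarity rules out finiteness --- and that one correctly attributes the converse implication. I expect to present this as a short two- or three-line proof immediately following the statement.

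\begin{proof}[Proof of Corollary \ref{MainCorollary}]
Suppose first that $G$ contains no nontrivial finite normal subgroup. Being non-elementary acylindrically hyperbolic, $G$ is an infinite countable discrete group. By Theorem \ref{MainThm}, $G$ admits a minimal and extremely proximal boundary action on a compact metrizable space which, under the present hypothesis, is topologically free; note that an extreme boundary is in particular a $G$-boundary, and a topologically free extreme boundary is what is required. Hence Theorem \ref{Thm: Ozawa} applies and $G$ is $C^\ast$-selfless.

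Conversely, assume $G$ is $C^\ast$-selfless. Then $G$ is $C^\ast$-simple, so $C^\ast_r(G)$ has no nontrivial two-sided closed ideals. If $N\trianglelefteq G$ were a nontrivial finite normal subgroup, then $N$ is amenable, and the canonical surjection $C^\ast_r(G)\to C^\ast_r(G/N)$ would have nonzero kernel (its restriction to $C^\ast_r(N)\subset C^\ast_r(G)$ is not injective, as $C^\ast_r(N)$ is a nonzero finite-dimensional algebra collapsing under the averaging over $N$), contradicting simplicity. Therefore $G$ has no nontrivial finite normal subgroup.
\end{proof}
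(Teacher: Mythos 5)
Your proof is correct and follows exactly the route the paper intends: the ``if'' direction is Theorem \ref{MainThm} fed into Ozawa's Theorem \ref{Thm: Ozawa}, and the ``only if'' direction is the standard observation that $C^\ast$-selflessness implies $C^\ast$-simplicity, which is obstructed by any nontrivial finite (indeed amenable) normal subgroup --- the paper leaves this converse implicit, and your argument via the kernel of $C^\ast_r(G)\to C^\ast_r(G/N)$ (or equivalently the central projection $\frac{1}{|N|}\sum_{n\in N}\lambda_n$) fills it in correctly.
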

This removes the rapid decay assumption of $G$ in \cite{AGKEP}.
Upon completion of this note, we became aware of a recent update  to the paper \cite{Oza25}, in which Ozawa already independently proves Corollary \ref{MainCorollary}.  In contrast to our method via  his Theorem \ref{Thm: Ozawa}, his approach is based on de la Harpe's axiomatization of Powers' argument \cite{Pow75,HG86} and does not use the compactness. We refer to \cite[Theorem 14]{Oza25} for details. 

In the course of this work, we took a curious look at  several closely-related properties in $C^\ast$-algebras, such as the free semi-group property \cite{DH99}, the $P_{nai}$ property \cite{BCH94}, and paradoxical towers \cite{GGKN}. We use  boundary actions  to provide a concise and elementary proof that these properties hold for a \textit{tamed} class of non-elementary action of $G$ on a hyperbolic space $X$. The notion of a tamed action   (see Definition \ref{defn: tamed action}) includes the class of acylindrically hyperbolic groups. Thus, we  generalize the corresponding results in acylindrically hyperbolic groups were proved in  \cite[Proposition 4.1(a)]{GO20}, \cite[Theorem 0.1]{AD19}, and \cite[Proposition 4.7]{GGKN}. We refer to Propositions \ref{prop: free semgigroups} and \ref{prop: paradoxical towers} for detailed statements.
 

\setcounter{tocdepth}{2} \tableofcontents
\subsection*{Acknowledgments}
We are grateful to Xin Ma for bringing our attention to the reference \cite{Oza25}, and to Kang Li for the reference \cite{GGKN}, as well as for their insightful discussions on $C^\ast$-algebras. Thanks also goes to Adrien Le Boudec for pointing out a short proof of topological freeness  (Remark \ref{rmkmonolith}). 

\section{Preliminaries}

In the first two subsections, we  review the standard  facts in hyperbolic geometry about Gromov boundary and limit sets, and then define the notion of Myrberg limit points. From \textsection \ref{sec: horofunction boundary}, we  introduce  the horofunction boundary, and the finite difference partition on it, and focus on its relation with Gromov boundary. Several elementary results    proved in \textsection \ref{sec: minimal and isolated points} shall be used later on.     

\subsection{Hyperbolic spaces and Gromov boundary}
A geodesic metric space $(X,d)$ is called \textit{proper} if every closed ball is compact. A map $\phi:I\subset \mathbb R\to X$ from an interval $I$ is called a \textit{$c$-quasi-geodesic} for some $c\ge 1$ if $c^{-1}|s-t|-c\le d(\phi(s),\phi(t))\le c|s-t|+c$ for any $s, t\in I$.

We say that $X$  is  \textit{$\delta$-hyperbolic} for some $\delta\ge 0$ if any geodesic triangle is $\delta$-thin. Define the Gromov product as $\langle x,y\rangle_z=(d(x,z)+d(y,z)-d(x,y))/2$ for $x,y,z\in X$. A sequence of points $x_n\in X$ \textit{converges to infinity} if $\langle x_n, x_m\rangle_o\to \infty$ as $n,m\to \infty$. Two such sequences $x_n,y_n$ are  \textit{equivalent} if $\langle x_n, y_m\rangle_o\to \infty$ as $n,m\to \infty$. Let $\partial X$ denote the Gromov boundary  of $X$, which consists of equivalent classes of $x_n\in X$ converging to infinity. We could equip $\partial X$ with a metrizable topology by the so-called visual metric.  A sequence $x_n\in X$ converging to infinity may be represented by a quasi-geodesic ray $\gamma$ whose quasi-geodesicity parameters depend only on $\delta$. 

When $X$ is proper,   $\partial X$ is a compact  space. In general, it  may not be compact, which is  mostly interesting in the present paper.

\subsection{Limit set and Myrberg limit points}\label{sec: limit set and Myrberg} 
Let $G$ be a  group acting (not necessarily properly) by isometry on $X$.
We say that a point $\xi\in \partial X$ is called \textit{limit point} if there exists $g_n\in G$ so that $g_no \to \xi$ for some (or any) $o\in X$ as $n\to\infty$. The set of limit points forms the  \textit{limit set} denoted as $\Lambda G$.

An isometry $g$ is called \textit{loxodromic} if the \textit{quasi-axis}  $\ax(g)=\cup_{n\in \mathbb Z} g^n[o,go]$ is a quasi-geodesic. The  quasi-geodesics $\ax(g)$ defines the  two fixed points in $\partial X$ denoted by $g^-,g^+$, which are the limit points of $g^no$ as $n\to -\infty$ (resp. $n\to +\infty$). It is well-known that loxodromic isometries admit \textit{north-south dynamics} relative to their two fixed points on $\partial X$, stated as follows. 

\begin{lem}\label{loxo North-South dynamics}
Let $g$ be a loxodromic isometry on $X$ with fixed points $g^-\ne g^+$. Then for any open neighborhoods $U$ and $V$ of $g^-$ and $g^+$ respectively, there exists $n_0$ so that $g^n(\partial X\setminus U)\subset V$ and $g^{-n}(\partial X\setminus V)\subset U$ for any $n\ge n_0$.  
\end{lem}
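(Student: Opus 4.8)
The plan is to reduce the statement to a single coarse estimate on Gromov products seen from a basepoint moving out along $\ax(g)$, using that the Gromov product is isometry-invariant. Let $o$ be the basepoint used to define $\ax(g)=\bigcup_{n\in\mathbb Z}g^{n}[o,go]$ (so $o\in\ax(g)$); the visual topology on $\partial X$ does not depend on this choice, and the ``shadow'' sets $W_R(\eta):=\{\xi\in\partial X:\langle\eta,\xi\rangle_o>R\}$ form a neighbourhood basis at each $\eta\in\partial X$. Picking $R_0$ with $W_{R_0}(g^-)\subset U$ and $R_1$ with $W_{R_1}(g^+)\subset V$, we have $\partial X\setminus U\subset\{\xi:\langle g^-,\xi\rangle_o\le R_0\}$, so it suffices to find $n_0$ with $\langle g^+,g^n\xi\rangle_o>R_1$ for every $n\ge n_0$ and every $\xi$ satisfying $\langle g^-,\xi\rangle_o\le R_0$; the second inclusion then follows by applying this to $g^{-1}$, whose repelling and attracting fixed points are $g^+$ and $g^-$.

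For such a $\xi$, set $w:=g^{-n}o\in\ax(g)$. Since $g$ is an isometry fixing $g^+$, we have $\langle g^+,g^n\xi\rangle_o=\langle g^+,\xi\rangle_w$, so I must show this is large for $n$ large. First, $g^{-n}o\to g^-$ gives $\langle g^-,w\rangle_o\to\infty$, so once $n$ is large the $\min$-inequality for Gromov products forces $\langle w,\xi\rangle_o\le R_0+O(\delta)$ (otherwise $\langle g^-,\xi\rangle_o$ would exceed $R_0$); hence the standard basepoint-change estimate $\langle\xi,o\rangle_w\ge d(o,w)-\langle\xi,w\rangle_o-O(\delta)$ yields $\langle\xi,o\rangle_w\ge d(o,w)-R_0-O(\delta)$. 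Second, $w$, $o$ and $g^+$ occur in that order along the quasi-geodesic $\ax(g)$, so $[w,o]$ is, up to a constant $C_0$ depending only on $\delta$ and the quasi-geodesic constant of $\ax(g)$, an initial segment of a quasi-geodesic ray from $w$ to $g^+$; thus $\langle o,g^+\rangle_w\ge d(o,w)-C_0$.

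Combining the two estimates with $\langle g^+,\xi\rangle_w\ge\min(\langle g^+,o\rangle_w,\langle o,\xi\rangle_w)-O(\delta)$ gives $\langle g^+,\xi\rangle_w\ge d(o,w)-\max(R_0,C_0)-O(\delta)$. As $g$ is loxodromic, $d(o,w)=d(o,g^no)\to\infty$, so for all $n$ past some $n_0$ (depending only on $g$, $\delta$, $R_0$, $R_1$) the right-hand side exceeds $R_1$, uniformly over the admissible $\xi$. This proves the claim, hence the lemma.

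I expect the work to be bookkeeping rather than conceptual. The points needing care are: (a) that the sets $W_R(\eta)$ form a neighbourhood basis and that Gromov products, the $\min$-inequality, and the basepoint-change estimate extend to boundary points with a uniform $O(\delta)$ error — all standard for Gromov hyperbolic spaces and valid without local compactness, so the non-properness of $X$ is harmless; and (b) the claim that ``$w,o,g^+$ in order along $\ax(g)$'' forces $\langle o,g^+\rangle_w\ge d(o,w)-C_0$, which is an application of the stability (Morse) lemma for quasi-geodesics in hyperbolic spaces.
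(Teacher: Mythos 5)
Your proof is correct. The paper states this lemma as a well-known fact and gives no proof at all, so there is nothing to compare against; your argument via shadow neighbourhoods $W_R(\eta)=\{\xi:\langle\eta,\xi\rangle_o>R\}$, the isometry-invariance identity $\langle g^+,g^n\xi\rangle_o=\langle g^+,\xi\rangle_{g^{-n}o}$, the basepoint-change formula, and Morse stability of the quasi-axis is the standard one, and all the ingredients you invoke (the $\min$-inequality with uniform $O(\delta)$ error for boundary Gromov products, stability of quasi-geodesics) do hold in non-proper $\delta$-hyperbolic geodesic spaces, so the argument is sound as written.
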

By this reason, $g^-$ and $g^+$  are refereed to as the \textit{repeller} and  \textit{attractor}, which shall be switched for the inverse  $g^{-1}$. We write $g^\pm=\{g^-,g^+\}$.

Two loxodromic isometries are called \textit{weakly independent} if their fixed points are disjoint in $\partial X$. Equivalently, their axis have bounded projection.
An isometric action of $G$ on $X$ is called \textit{non-elementary} if $G$ contains two weakly independent loxodromic isometries. 

\begin{defn}\cite{Osin6}\label{defn: acylindrical action}
We say that a non-elementary action of $G$ on $X$  is \textit{acylindrical} if   for any $r>0$ there exist $L,N<\infty$ with the following property. If $d(x,y)>L$ for $x,y\in X$, then $\sharp \{g\in G: d(x,gx)\le r, d(y,gy)\le r\}\le N$.    
\end{defn}

If the action is acylindrical, then every loxodromic element $g\in G$ is contained in a unique maximal elementary subgroup denoted by $E(g)$ (\cite[Lemma 6.5]{DGO}). Equivalently, $E(g)=\{f\in G: fg^\pm=g^\pm\}$ is exactly the set-stabilizer of the two fixed points $g^-,g^+$ in $\partial X$. Two loxodromic elements $h,k$ are called \textit{independent} if $gE(h)g\ne E(k)$ for any $g\in G$. This is a stronger notion of independence, as explained in the next lemma.

\begin{lem}\label{lem: acylin independent}\cite[Theorem 6.8]{DGO}
Assume that $G$ acts non-elementarily and acylindrically on a hyperbolic space $X$.
If two loxodromic elements $h,k\in G$ are independent, then the family $\f=\{g\ax(h), g\ax(k): g\in G\}$ has bounded projection property. That is, there exists  $\tau>0$ so that for any $\alpha\ne \beta\in \f$, the shortest projection of $\alpha$ to $\beta$ has diameter at most $\tau$.    
\end{lem}



In what follows, assume only that  $G$ acts non-elementarily on a hyperbolic space $X$.

The following elementary lemma will be frequently used. It is refereed to as Extension Lemma in \cite{YANG10}, which holds for weakly independent contracting isometries on a general metric space.
\begin{lem}\label{extend3}
Let $F$ be a set of three weakly independent loxodromic isometries on $X$. Then there exist $c>1, \tau>0$ depending $o$ and the axis $\ax(f)$ with $f\in$ $F$ so that 
$$
\forall f\ne f'\in F:\quad \min\{\pi_{\ax(f)}([o,go]),\pi_{\ax(f')}([o,go])\}\le \tau
$$
for any $g\in G$. 
In particular, for any $g,h\in G$, there exists $f\in F$ so that $gfh$ labels a $c$-quasi-geodesic. 
\end{lem}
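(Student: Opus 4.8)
The plan is to isolate a combinatorial statement about one geodesic issuing from $o$ and the three axes, and then read off both halves of the lemma. The feature I would exploit is that \emph{all three quasi-axes pass through the basepoint $o$}: by definition $\ax(f)=\bigcup_{n}f^{n}[o,fo]\supset[o,fo]\ni o$. I would use two standard inputs. (a) Each $\ax(f)$ is a uniformly Morse quasi-geodesic, hence satisfies a contracting / bounded-geodesic-image property with constants $R,B_0$ depending only on $\delta$, $o$ and the axes; concretely, since $o\in\ax(f)$, for any $y$ with $\pi_{\ax(f)}([o,y])>B_0$ the \emph{initial} subsegment of $[o,y]$ of length $\pi_{\ax(f)}([o,y])-O(R)$ stays within $R$ of $\ax(f)$. (b) Weak independence says $B:=\sup_{f\ne f'\in F}\pi_{\ax(f)}(\ax(f'))<\infty$; and since $o\in\ax(f)\cap\ax(f')$, the bounded set $\pi_{\ax(f)}(\ax(f'))$ contains $\pi_{\ax(f)}(o)=o$, hence lies within $B$ of $o$.

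Granting these, I would prove the \emph{Key Claim}: there is $\tau>0$, depending only on $\delta,R,B$ and the axis constants, so that every geodesic $[o,y]$ has $\pi_{\ax(f)}([o,y])>\tau$ for at most one $f\in F$. Suppose not: for some $y$ and distinct $f,f'\in F$, both $\pi_{\ax(f)}([o,y])>\tau$ and $\pi_{\ax(f')}([o,y])>\tau$, with $\tau$ chosen $\gg B+R+\delta$. By (a), the initial subsegments of $[o,y]$ of length $\tau-O(R)$ stay within $R$ of $\ax(f)$ and of $\ax(f')$ respectively; these overlap, so a point $w\in[o,y]$ with $d(o,w)\approx\tau/2$ satisfies $d(w,a),d(w,a')\le R$ for some $a\in\ax(f)$ and $a'\in\ax(f')$, whence $d(a,a')\le 2R$ and $\pi_{\ax(f)}(a')$ lies within $O(R)$ of $a$. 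On the other hand, $a'\in\ax(f')$ gives $\pi_{\ax(f)}(a')\in\pi_{\ax(f)}(\ax(f'))$, which by (b) is within $B$ of $o$; and $d(o,a)\approx d(o,w)\approx\tau/2$. Combining these estimates yields $\tau/2\le B+O(R)$, contradicting the choice of $\tau$. Feeding $y=go$ into the Key Claim gives the displayed inequality. The one genuinely delicate point — which I expect to be the main obstacle — is establishing the contracting / fellow-travelling estimate (a) with constants uniform in $y$ and in $f\in F$; modulo that, the contradiction is exactly as above. This is in essence the Extension Lemma of \cite{YANG10}, proved there for weakly independent contracting isometries of an arbitrary metric space.

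For the ``in particular'', fix $g,h\in G$ and apply the Key Claim to the geodesics $[o,g^{-1}o]$ and $[o,ho]$: the sets $B_g=\{f\in F:\pi_{\ax(f)}([o,g^{-1}o])>\tau\}$ and $B_h=\{f\in F:\pi_{\ax(f)}([o,ho])>\tau\}$ each contain at most one element, so since $\card F=3$ there is $f\in F\setminus(B_g\cup B_h)$. After replacing the elements of $F$ by a fixed power $f^{n_0}$ with $n_0$ uniform — harmless, as $\ax(f^{n_0})$ still contains $o$ and stays within finite Hausdorff distance of $\ax(f)$, so everything above persists — I may assume $d(o,fo)$ exceeds the threshold in Gromov's local-to-global principle. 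Then the path $P=[o,go]\,\ast\,g[o,fo]\,\ast\,gf[o,ho]$ from $o$ to $gfh\cdot o$ has a long middle piece $g[o,fo]\subset g\,\ax(f)$ lying on a uniform quasi-geodesic, and backtracking $\le\tau$ at both junction points, since $\pi_{g\,\ax(f)}([o,go])=\pi_{\ax(f)}([o,g^{-1}o])\le\tau$ (as $f\notin B_g$) and $\pi_{g\,\ax(f)}(gf[o,ho])=\pi_{\ax(f)}([o,ho])\le\tau$ (using that $\ax(f)$ is exactly $f$-invariant and $f\notin B_h$). A concatenation of uniform quasi-geodesics with a long middle piece and bounded backtracking at the junctions is a $c$-quasi-geodesic for some $c=c(\delta,\tau,\text{axis constants})$, which is precisely the assertion that $gfh$ labels a $c$-quasi-geodesic.
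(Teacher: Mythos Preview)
Your proposal is correct and follows essentially the same approach as the paper: the displayed inequality comes from the pairwise bounded projection of the three axes (the paper states this in one line; you unpack it via the contracting property and the observation that $o\in\ax(f)$ for each $f$), and the ``in particular'' is deduced from the local-to-global principle for quasi-geodesics after a pigeonhole on $|F|=3$. Your write-up is more detailed than the paper's two-sentence sketch, and your remark about passing to a uniform power $f^{n_0}$ to get $d(o,fo)$ above the local-to-global threshold is exactly how the lemma is applied throughout the paper.
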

\begin{proof}
The displayed equation follows from the fact that the axis of three elements in $F$ has bounded projection. The ``in particular" statement then follows from the well-known fact in hyperbolic geometry that local quasi-geodesics are quasi-geodesics.      
\end{proof}

We remark that a proof of the following well-known fact in a much greater generality could find in \cite[Lemma 3.13]{YANG22} for contracting elements in groups acting on the so-called convergence boundary.
\begin{lem}\label{lem: double dense elements}
Let $g,h$ be two weakly independent loxodromic isometries on $X$. Then for all $n\gg 0$,  $g^nh^n$ is a loxodromic element. Moreover,  the attractor and repeller    of $g^nh^n$ tend to $ g^+$ and $h^-$ respectively.      
\end{lem}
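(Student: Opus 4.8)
The plan is to prove Lemma \ref{lem: double dense elements} by a direct application of the Extension Lemma \ref{extend3} together with the north-south dynamics on $\partial X$ and the standard fact that long local quasi-geodesics built from weakly independent axes are global quasi-geodesics.

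First I would set up the quasi-geodesic estimate. Since $g,h$ are weakly independent loxodromics, their quasi-axes $\ax(g)$ and $\ax(h)$ have bounded projection to each other. Fix $o\in X$ on (or near) both axes. Standard hyperbolic geometry (as invoked in the proof of Lemma \ref{extend3}) says there is a constant $c>1$ so that for all sufficiently large $n$ the concatenation
\[
[o, g^n o] \cup [g^n o, g^n h^n o]
\]
is a $c$-quasi-geodesic: the first segment fellow-travels a long subray of $\ax(g)$, the second fellow-travels a long subray of $g^n\ax(h)$, and the bounded-projection hypothesis forces the turning at $g^n o$ to be a definite-angle corner, so the broken path is a local quasi-geodesic on a scale that grows with $n$, hence a global one. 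Iterating $g^n h^n$ and translating, the bi-infinite path $\bigcup_{k\in\mathbb Z}(g^nh^n)^k\big([o,g^n o]\cup[g^n o, g^n h^n o]\big)$ is likewise a quasi-geodesic with parameters independent of $n$ (for $n$ large), because each turn is again a bounded-projection corner of the same type. This exhibits $\ax(g^n h^n)$ as a quasi-geodesic, so $g^n h^n$ is loxodromic for all $n\gg 0$.

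Next I would locate the fixed points. The repeller $(g^nh^n)^-$ is the limit of $(g^nh^n)^{-k}o$ as $k\to\infty$, and the attractor $(g^nh^n)^+$ the limit as $k\to+\infty$. From the quasi-geodesic description above, the forward ray from $o$ first runs along $\ax(g)$ a distance comparable to $n\ell(g)$ and only then turns; so as $n\to\infty$ the initial segment $[o, g^n o]$ exhausts the forward ray of $\ax(g)$ and the Gromov product $\langle (g^nh^n)^+ , g^+\rangle_o\to\infty$, i.e. $(g^nh^n)^+\to g^+$ in $\partial X$. Symmetrically, the backward ray from $o$ runs along $(g^nh^n)^{-1}\ax(h)=$ (a translate of) $\ax(h^{-1})$ for a long initial stretch before turning, giving $(g^nh^n)^-\to h^-$. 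One can alternatively phrase this via Lemma \ref{loxo North-South dynamics}: for fixed neighborhoods $U\ni g^+$, $V\ni h^-$, once $n$ is large enough $g^n$ pushes everything outside a small neighborhood of $g^-$ into $U$ and $h^n$ pushes everything outside a small neighborhood of $h^+$ into $h^-$'s... — but the cleanest route is the Gromov-product estimate from the explicit quasi-axis.

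The main obstacle is the first step: making precise, with constants independent of $n$, that the broken geodesic is a quasi-geodesic, and in particular that the quasi-geodesicity parameters of $\ax(g^nh^n)$ do not degenerate as $n\to\infty$. This is exactly the content behind the bounded-projection/local-quasi-geodesic machinery of \cite{YANG10,YANG22}, so rather than reprove it I would cite \cite[Lemma 3.13]{YANG22} (as the excerpt already flags) for the general statement on contracting elements, and only indicate the specialization: weakly independent loxodromics on a hyperbolic space are contracting with pairwise bounded projections, the hypotheses of that lemma are met, and its conclusion is precisely that $g^nh^n$ is loxodromic with attractor and repeller converging to $g^+$ and $h^-$. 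The remaining verification that our $g,h$ fall under that framework is routine.
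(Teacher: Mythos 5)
Your argument is correct and matches the paper's treatment: the paper itself gives no proof of this lemma, simply deferring to \cite[Lemma 3.13]{YANG22} in the preceding remark, which is exactly the citation you fall back on, and your sketch (bounded projections of the weakly independent axes make the broken path $\bigcup_k (g^nh^n)^k([o,g^no]\cup[g^no,g^nh^no])$ a local, hence global, quasi-geodesic with uniform constants for $n\gg 0$, after which the Gromov-product estimates along the initial forward segment $[o,g^no]\subset\ax(g)$ and backward segment $[o,h^{-n}o]\subset\ax(h)$ give $(g^nh^n)^\pm\to g^+,h^-$) is the standard argument behind that reference. The only cosmetic slip is describing the backward ray as running along $(g^nh^n)^{-1}\ax(h)$; the segment of the quasi-axis adjacent to $o$ in the backward direction is $[o,h^{-n}o]$, which lies on $\ax(h)$ itself, but this does not affect the conclusion.
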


The following fact is also well-known \cite[8.2.H]{Gro}; we include a proof to illustrate several arguments used later on.   
\begin{lem}\label{limitsetbigtwo}
Assume that $G$ acts non-elementarily on $X$. Then 
\begin{enumerate}[label=(\roman*)]
    \item The action of $G$ on the limit set $\Lambda G$ is minimal: any $G$-orbit in $\Lambda G$ is dense. 
    \item 
    The set of fixed point pairs of all loxodromic isometries is dense in $\Lambda G\times \Lambda G$.
\end{enumerate}
\end{lem}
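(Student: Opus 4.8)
The plan is to prove both parts using the Extension Lemma \ref{extend3} together with the north-south dynamics of loxodromic isometries (Lemma \ref{loxo North-South dynamics}). For part (i), I would fix a loxodromic isometry $g\in G$ with attractor $g^+$; since $G$ is non-elementary, such an element exists. Given an arbitrary limit point $\xi\in\Lambda G$ and an arbitrary nonempty open set $O\subset \Lambda G$, I want to find $h\in G$ with $h\xi\in O$. First choose a loxodromic $f$ whose attractor $f^+$ lies in $O$; this is possible because the $G$-orbit of $g^+$ accumulates anywhere the limit set does — more carefully, pick any limit point $\eta\in O$, approximate it by $g_n o\to\eta$, and conjugate $g$ by $g_n$ to get loxodromics with attractors converging to $\eta$, so one of them has attractor in $O$. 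Now I use the Extension Lemma with a set $F$ of three weakly independent loxodromics (available by non-elementarity, after replacing by large powers if needed so that their axes have bounded projection): for the pair of group elements witnessing $\xi$ as a limit, there is $f'\in F$ whose axis is not closely projected onto by $[o,g_k o]$, which lets me conclude that for large powers $m$, the element $(f^m)$ applied to $\xi$ lands near $f^+\in O$ by the north-south dynamics of $f$, provided $\xi$ is not already the repeller $f^-$. To handle the case $\xi=f^-$, I first move $\xi$ slightly by some fixed loxodromic whose fixed points avoid $f^\pm$, which is again guaranteed by having three weakly independent loxodromics around.

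For part (ii), the strategy is to combine part (i)-style arguments with Lemma \ref{lem: double dense elements}. Given a target pair $(\alpha,\beta)\in\Lambda G\times\Lambda G$ and neighborhoods $U\ni\alpha$, $V\ni\beta$, I want a loxodromic whose repeller lies in $U$ and whose attractor lies in $V$. By the density argument in part (i), choose loxodromics $a,b\in G$ with $a^-\in U$ and $b^+\in V$; after passing to conjugates I may further arrange that $a$ and $b$ are weakly independent (their four fixed points are distinct), using the three-weakly-independent-loxodromics input once more to perturb if some fixed points coincide. Then Lemma \ref{lem: double dense elements} applied to the pair $(b,a)$ (in the right order, with large $n$) produces the loxodromic $b^n a^n$ whose attractor tends to $b^+\in V$ and whose repeller tends to $a^-\in U$; for $n$ large enough both fixed points lie in $V$ and $U$ respectively, so the fixed-point pair of $b^n a^n$ lies in $U\times V$. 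This shows the set of fixed-point pairs is dense.

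The main obstacle I anticipate is the careful bookkeeping needed to ensure weak independence at every step, since $X$ is not assumed proper and $\partial X$ is not compact, so one cannot extract convergent subsequences of boundary points freely. The key technical device throughout is the Extension Lemma \ref{extend3}, which converts a combinatorial fact about bounded projections between axes into the statement that a suitable product $gfh$ labels a uniform quasi-geodesic; this is precisely what substitutes for the compactness argument one would use in the proper case, and getting the quantifiers right (the dependence of the quasi-geodesic constant only on $\delta$ and the chosen axes, uniformly in $g,h$) is where the real work lies. Once the quasi-geodesic is in hand, reading off the limiting behavior of the endpoints is standard hyperbolic geometry.
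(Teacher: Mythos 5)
Your proposal is correct and follows essentially the same route as the paper: part (i) combines the Extension Lemma \ref{extend3} (to show attractors of loxodromics accumulate on every limit point) with the north--south dynamics of Lemma \ref{loxo North-South dynamics}, and part (ii) feeds two loxodromics with fixed points in the prescribed neighborhoods into Lemma \ref{lem: double dense elements}, exactly as the paper does. The only point to tighten is your opening step in (i): for a \emph{fixed} $g$, the conjugates $g_n g g_n^{-1}$ need not have attractors $g_n g^+$ converging to $\eta$ merely because $g_n o\to\eta$ --- one must choose, for each $n$, an element $f'\in F$ via the Extension Lemma so that $[o,g_n o]$ has bounded projection to $g_n\ax(f')$ and then use $g_n (f')^+$, which is precisely what the paper does and what you in effect invoke a sentence later.
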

\begin{proof}
Fix a loxodromic isometry $h\in G$ on $X$ with fixed points $h^-,h^+\in \partial X$. Since $h$ admits north-south dynamics on  $\Lambda G$, we see that the closure $\Lambda$ of the set $\{gh^-,gh^+: g\in G\}$ is the unique $G$-minimal subset of $\partial X$. Indeed, let $A$ be a $G$-invariant closed subset in $\partial X$. Then $|A|\ge 3$; otherwise it contradicts the existence of two weakly independent loxodromic isometries. To see $\Lambda\subset A$, we choose a point $x\in A\setminus \{h^-,h^+\}$. As $h^nx\to h^+$, we have $h^+\in A$, so $\Lambda \subset A$. We now prove that $\Lambda =\Lambda G$. It suffices to prove $\Lambda G\subset \Lambda$. For any $p\in \Lambda G$, there exists $g_n\in G$ so that $g_no\to p$. Choose $f\in F$ by Lemma \ref{extend3} so that $[o,g_no]$ has bounded $\tau$-projection to $g_n\ax(f)$ for some $\tau>0$ independent of $g_n$. Then it is easy exercise that $g_nf^+\to p$.  

To prove \textit{(ii)}, we first note that any open subset $U$ contains the fixed points of some loxodromic element $g$. Indeed, let $h,k$ be two weakly independent loxodromic elements. By \textit{(i)},  there is some $f\in G$ so that $(fhf^{-1})^+=fh^+\in U$. By the north-south dynamics for $fhf^{-1}$, since $fh^\pm\cap fk^\pm=\emptyset$,  for all large  $n\gg 1$,  the fixed points $fh^nf^{-1} (fk^\pm)$ of $fh^n k h^{-n}f^{-1}=(fh^nf^{-1}) (fkf^{-1}) (fh^{-n}f^{-1})$ are contained in $U$. This also implies that for any loxodromic element $h$ there are infinitely many loxodromic elements that are weakly independent with $h$.

Given $a\ne b\in \Lambda G$, let us take disjoint open neighborhoods $U$ and $V$ of $a$ and $b$. By the above discussion, there exist two loxodromic isometries $h, k$ so that $h^\pm\in U, k^\pm\in V$. By Lemma \ref{lem: double dense elements}, for $n\gg 1$, $g_n:=h^nk^n$ is loxodromic isometry with fixed points $g_n^+\to h^+$ and $g_n^-\to h^-$. Thus, $(g_n^+,g_n^-)\in (U,V)$. As $U,V$ are arbitrary the double density follows.   
\end{proof}

\begin{defn}\label{defn: Myrberg}
A limit point $p\in \Lambda G$ is called \textit{Myrberg limit point} if $\Lambda G\times \Lambda G$ is contained in the accumulation points of  $\{(gx,gp): g\in G\}$ for some  (or any) $x\in X$. That is, for any $\xi, \eta\in \Lambda G$, there exists $g_n\in G$ so that $g_nx\to \xi$ and $g_np\to \eta$.    
\end{defn}
\begin{lem}\label{lem: Myrberg chracterization}\cite[Lemma 2.11]{MY25}
A  point $p\in \Lambda G$ is a {Myrberg} limit point if and only if for any $c>1$ there exists $R=R(c,\delta)$ with the following property. Given a point $x\in X$, let $[x,p]$ be a $c$-quasi-geodesic ray ending at $\xi$ and   $f\in G$  be  a loxodromic isometry. Then there exists a sequence of $g_n\in G$ so that the diameter of $[x,p]\cap N_R(g_n\ax(f))$  tends to $\infty$.    
\end{lem}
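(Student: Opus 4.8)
The plan is to prove the two implications separately. One direction uses only the stability (Morse) property of quasi-geodesics in a hyperbolic space; the other combines the density of loxodromic fixed-point pairs (Lemma \ref{limitsetbigtwo}) with a recentering trick along $\ax(f)$. For the forward direction (Myrberg $\Rightarrow$ geometric condition) I would fix $c>1$ and a loxodromic $f$; since $f^{\pm n}o\to f^{\pm}$, both $f^-,f^+$ lie in $\Lambda G$, so I may apply Definition \ref{defn: Myrberg} with $(\xi,\eta)=(f^-,f^+)$ to obtain $g_n\in G$ with $g_nx\to f^-$ and $g_np\to f^+$. Then $g_n[x,p]=[g_nx,g_np]$ is a $c$-quasi-geodesic whose two endpoints (one in $X$, one in $\partial X$) converge to the two endpoints of the bi-infinite quasi-geodesic $\ax(f)$; by the Morse lemma with endpoints at infinity there is a constant $R$ — depending on $c$, $\delta$, and the quasi-geodesicity constant of $\ax(f)$, the last of which can be taken arbitrarily close to $1$ after replacing $f$ by a power — such that the diameter of $[g_nx,g_np]\cap N_R(\ax(f))$ is at least $\langle g_nx,f^-\rangle_o+\langle g_np,f^+\rangle_o-O(1)$, which tends to $\infty$. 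Applying the isometry $g_n^{-1}$ rewrites this as the assertion for the sequence $(g_n^{-1})$.

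For the converse (geometric condition $\Rightarrow$ Myrberg) I would work with $\mathcal A\subseteq\Lambda G\times\Lambda G$, the set of accumulation points of $\{(gx,gp):g\in G\}$: this is a closed, $G$-invariant set (the orbit maps accumulate in the closed $G$-invariant limit set), and $p$ is Myrberg exactly when $\mathcal A=\Lambda G\times\Lambda G$. Since the set of loxodromic fixed-point pairs is dense in $\Lambda G\times\Lambda G$ by Lemma \ref{limitsetbigtwo}(ii), it suffices to place $(f^-,f^+)$ in $\mathcal A$ for every loxodromic $f$. Fixing such an $f$ with stable translation length $\ell>0$, a basepoint $o\in\ax(f)$, and a parametrization of $\ax(f)$ by $\mathbb R$, I would apply the hypothesis to $f$ and to the ray $[x,p]$ — and, crucially, to its sub-rays $[y,p]$ — to get $g_n\in G$ for which $g_n^{-1}[x,p]$ runs within a uniform constant of $\ax(f)$ from a point near $\ax_f(u_n)$ to a point near $\ax_f(v_n)$, with $|v_n-u_n|\to\infty$; since two quasi-geodesics in a hyperbolic space that come close at two points stay close in between, this is coarsely all of $g_n^{-1}[x,p]\cap N_R(\ax(f))$. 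I would then replace $g_n^{-1}$ by $h_n:=f^{-N_n}g_n^{-1}$, choosing $N_n$ so that the overlap is recentered to straddle $\ax_f(0)$ symmetrically with both halves of length $\to\infty$; reading off the two ends of the quasi-geodesic ray $h_n[x,p]$ then gives $\langle h_nx,\cdot\rangle_o\to\infty$ against one end of $\ax(f)$ and $\langle h_np,\cdot\rangle_o\to\infty$ against the other, so $(h_nx,h_np)$ converges to $(f^-,f^+)$ or $(f^+,f^-)$; using $G$-invariance and closedness of $\mathcal A$ together with the flipping of fixed points afforded by weakly independent loxodromics, one promotes this to $(f^-,f^+)\in\mathcal A$. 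Hence $\mathcal A$ is closed and contains a dense subset, so $\mathcal A=\Lambda G\times\Lambda G$.

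The forward implication is routine once one has the Morse lemma. I expect the real work, and the main obstacle, to be in the converse, where two points are genuinely delicate: (i) the long overlap must occur early enough along the ray relative to its own length, so that translating by a power of $f$ can push the ray's initial point towards one end of $\ax(f)$ without dragging its terminal point off the other — this is what forces one to use the hypothesis for all sub-rays $[y,p]$, not merely for $[x,p]$; and (ii) the direction in which the ray traverses the overlap is not a priori controlled, so a separate argument (via the $G$-invariance and closedness of $\mathcal A$, or via the minimality/proximality of the boundary action) is needed to turn the pair one obtains into the specific pair $(f^-,f^+)$. I expect point (i), together with the coarse-geometry bookkeeping about how a quasi-geodesic can enter and leave $N_R(\ax(f))$ in a hyperbolic space, to be the technical heart of the proof; the dependence of $R$ on the quasi-geodesicity constant of $\ax(f)$, noted above, is only a minor nuisance.
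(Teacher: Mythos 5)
The paper offers no proof of this lemma---it is quoted verbatim from \cite[Lemma 2.11]{MY25}---so there is nothing in-house to compare against; I am assessing your sketch on its own terms. Your forward direction is correct and standard: apply Definition \ref{defn: Myrberg} to the pair $(f^-,f^+)$, use stability of quasi-geodesics to get a long overlap of $g_n[x,p]$ with $N_R(\ax(f))$, and translate back by $g_n^{-1}$. (The dependence of $R$ on the quasi-geodesicity constant of $\ax(f)$ is, as you say, immaterial for how the lemma is used.) Your worry (i) about sub-rays is also not a real obstruction: after recentering, the entry point of the overlap coarsely separates $h_nx$ from the basepoint along the ray wherever the overlap occurs, so both Gromov products $\langle h_nx, f^-\rangle_o$ and $\langle h_np, f^+\rangle_o$ diverge without invoking the hypothesis for $[y,p]$.

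The genuine gap is exactly your point (ii), and the tools you name do not close it as stated. Since $\ax(f)=\ax(f^{-1})$, the hypothesis is blind to orientation, so your recentering argument only yields, for each loxodromic $f$, that $(f^-,f^+)\in\mathcal A$ \emph{or} $(f^+,f^-)\in\mathcal A$, i.e.\ $\mathcal A\cup\sigma(\mathcal A)=\Lambda G\times\Lambda G$ where $\sigma$ swaps coordinates. Closedness plus diagonal $G$-invariance of $\mathcal A$ do not by themselves promote the wrong-orientation pair to the right one: the $G$-orbit of a single off-diagonal pair $(f^+,f^-)$ is \emph{not} dense in $\Lambda G\times\Lambda G$, because $\langle gf^+,gf^-\rangle_o=\langle f^+,f^-\rangle_{g^{-1}o}$ is large (so the pair is visually close to the diagonal) unless $g^{-1}o$ stays within bounded distance of $\ax(f)$; off the diagonal the orbit closure is essentially just the set of fixed-point pairs of the conjugates of $f$ whose axes pass near $o$. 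What does repair the argument is a topological transitivity statement for the diagonal action on $\Lambda G\times\Lambda G$: for any nonempty open boxes $U_1\times V_1$ and $U_2\times V_2$ there is $g$ with $g(U_1\times V_1)\cap (U_2\times V_2)\ne\emptyset$ (take a loxodromic $F$ with $F^-\in U_1$, $F^+\in V_2$, and use north--south dynamics of $F^{\pm n}$ to send some point of $V_1$ into $V_2$ and to pull some point of $U_2$ back into $U_1$). This implies that every closed $G$-invariant subset with nonempty interior is everything; then from $\mathcal A\cup\sigma(\mathcal A)=\Lambda G\times\Lambda G$ either $\mathcal A$ already has interior, or its complement is a nonempty open subset of the closed invariant set $\sigma(\mathcal A)$, forcing $\sigma(\mathcal A)=\Lambda G\times\Lambda G$ and hence $\mathcal A=\Lambda G\times\Lambda G$. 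You should either supply this transitivity lemma or replace the unoriented overlap condition by an oriented fellow-traveling statement before the converse can be considered proved.
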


The next fact for the proper action on CAT(0) spaces with rank-one elements  is proved in (\cite[Lemma 3.20]{MWY25B}). Moreover, for a proper action on hyperbolic spaces, the Hausdorff dimension of Myrberg limit set in Gromov boundary is computed as growth rate of the action which is positive (\cite[Theorem 1.10]{MY25}).  
\begin{lem}\label{lem: uncountable Myrberg}
Assume that $G$ acts non-elementarily on a hyperbolic space $X$. Then Myrberg limit points are uncountable in $\Lambda G$.    
\end{lem}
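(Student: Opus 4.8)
The plan is to inject the Cantor set $\{0,1\}^{\mathbb N}$ into the Myrberg limit set by coding each binary sequence $\omega$ into a quasi-geodesic ray of $X$ whose endpoint $p_\omega\in\partial X$ is forced to be a Myrberg limit point. Fix $o\in X$ and, using non-elementarity, three pairwise weakly independent loxodromic isometries forming a set $F$ to which the Extension Lemma (Lemma~\ref{extend3}) applies; write $a,b$ for two of them. Since $\Lambda G$ is $G$-minimal (Lemma~\ref{limitsetbigtwo}(i)) it is the closure of a single $G$-orbit, hence separable, so $\Lambda G\times\Lambda G$ has a countable dense subset; feeding a countable basis into the construction of Lemma~\ref{limitsetbigtwo}(ii) produces a countable family of loxodromic isometries $\{f_j\}_{j\ge1}$ with $\{(f_j^-,f_j^+)\}$ dense in $\Lambda G\times\Lambda G$. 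Enumerate $\mathbb N\times\mathbb N$ as a sequence of tasks $(j_k,m_k)_{k\ge1}$; the $k$-th task will be discharged by a power $f_{j_k}^{N_k}$ with $N_k$ large enough that $[o,f_{j_k}^{N_k}o]$ is an arc of $\ax(f_{j_k})$ of length at least $m_k$.

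For $\omega\in\{0,1\}^{\mathbb N}$, let $W_\omega$ be the infinite word obtained by alternating the task pieces $f_{j_k}^{N_k}$ with bit pieces $a^{n_k}$ (if $\omega_k=0$) or $b^{n_k}$ (if $\omega_k=1$), with $n_k$ above a uniform threshold, and inserting between each pair of consecutive pieces a glue element of $F$ furnished by the Extension Lemma so that the concatenation is a local, hence by the local-to-global principle for quasi-geodesics in $\delta$-hyperbolic spaces a global, quasi-geodesic with constants depending only on $\delta$, $o$ and the finitely many axes involved — in particular not on $\omega$. Its vertices $\gamma o$ (partial products $\gamma\in G$) converge to a point $p_\omega\in\partial X$, so $p_\omega\in\Lambda G$, and by the Morse lemma $[o,p_\omega]$ lies within a uniform Hausdorff distance $R$ of $W_\omega$.

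I would then check the two properties. Injectivity: if $\omega,\omega'$ first differ at $k$ then $W_\omega,W_{\omega'}$ agree up to a point $\gamma o$ and thereafter run, within a bounded initial portion, along a translate of $\ax(a)$ and of $\ax(b)$; weak independence of $a,b$ bounds $\langle p_\omega,p_{\omega'}\rangle_{\gamma o}$ by a constant, so $p_\omega\ne p_{\omega'}$. Myrberg property: fix $j$ and let $k$ range over the tasks with $j_k=j$. The piece $f_j^{N_k}$ contributes to $W_\omega$ a segment $\gamma_k[o,f_j^{N_k}o]$, a translate of an arc of $\ax(f_j)$ of length $\ge m_k$ which $[o,p_\omega]$ $R$-fellow-travels. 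Putting $g_k:=f_j^{-\lfloor N_k/2\rfloor}\gamma_k^{-1}$ recenters this arc at $o$; since $g_k[o,p_\omega]$ is a quasi-geodesic ray passing within $R$ of $o$ and running along $\ax(f_j)$ for at least $\lfloor N_k/2\rfloor$ fundamental domains on each side of $o$ before leaving, and (being a quasi-geodesic) not returning near $o$, the geodesic $[o,g_ko]$ fellow-travels the ray $[o,f_j^-)$ and $[o,g_kp_\omega]$ the ray $[o,f_j^+)$ for lengths tending to $\infty$; hence $g_ko\to f_j^-$ and $g_kp_\omega\to f_j^+$. Thus the condition of Definition~\ref{defn: Myrberg} holds at every pair $(f_j^-,f_j^+)$. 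Finally, the property ``there exist $g_n\in G$ with $g_no\to\xi$ and $g_np_\omega\to\eta$'' is closed in $(\xi,\eta)$ (given it for $(\xi_m,\eta_m)\to(\xi,\eta)$, a diagonal choice works because the Gromov product with boundary points is coarsely ultrametric), so it passes from the dense set $\{(f_j^-,f_j^+)\}$ to all of $\Lambda G\times\Lambda G$, and $p_\omega$ is a Myrberg limit point. This exhibits an injection $\{0,1\}^{\mathbb N}\hookrightarrow\Lambda G$ whose image consists of Myrberg limit points, so they are uncountable.

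The step I expect to require the most care is keeping the quasi-geodesic, and hence Morse, constants uniform across the infinitely many gluings and over all $\omega$ simultaneously: this needs the Extension Lemma coupled with a concatenation-stable local-to-global statement of the kind developed in \cite{BBF} and used in the proof of the Extension Lemma in \cite{YANG10}, and it is exactly what makes the separation estimate and the fellow-travelling estimate uniform. A secondary point is the reduction from arbitrary loxodromic isometries to the countable family $\{f_j\}$ (cf.\ the proof of Lemma~\ref{lem: Myrberg chracterization}), which uses separability of $\Lambda G$ — harmless once $G$ is assumed countable, as it may be here — together with the closedness of the convergence condition above; one could instead index tasks by (group element, length) attached to a single loxodromic, but that route is messier.
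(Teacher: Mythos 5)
Your proof is correct and follows essentially the same route as the paper: both build uncountably many quasi-geodesic rays by concatenating loxodromic pieces glued via the Extension Lemma (Lemma \ref{extend3}), arrange that each ray fellow-travels translates of the axes of a countable exhausting family of loxodromic isometries for diameters tending to infinity (forcing the Myrberg property), and separate endpoints by bounded-projection arguments applied to the encoding pieces. The only minor differences are that the paper encodes $\mathbb N^\infty$ via the powers $f_i^{2n_i}$ and certifies the Myrberg property through the characterization of Lemma \ref{lem: Myrberg chracterization}, whereas you encode $\{0,1\}^{\mathbb N}$ via a binary choice of axis and verify Definition \ref{defn: Myrberg} directly by a density-plus-closedness argument.
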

\begin{proof}
The  proof  is identical to \cite[Lemma 3.20]{MWY25B} for CAT(0) spaces, even though here the action  is not assumed to be proper. For completeness, we sketch the construction of uncountable Myrberg points.  Let $\mathcal L=\{h_1,h_2,\cdots, h_n,\cdots \}$ be the set of all loxodromic isometries. We define a map $\mathbb N^\infty\to \Lambda_{m} G$ which is proved to injective. 

Let $\omega=(n_1,n_2,\cdots, n_i,\cdots)\in \mathbb N^\infty$ be a sequence of positive integers. Consider the infinite word  $W=\prod_{i=1}^\infty h_i\cdot f_i^{2n_i}$ where $f_i\in F$ are chosen by Lemma \ref{extend3}. Denoting $g_n=\prod_{i=1}^{n-1} h_i\cdot f_i^{2n_i}$ for $n\ge 2$ and $g_1=1$, we    concatenat  paths labeled by $h_i f_i^{n_i}$ as follows:
$$\gamma=\bigcup_{i\ge 1} \left(g_i [o, h_io][h_io, h_if_i^{2n_i}]\right)$$
which is a $c$-quasi-geodesic ray for some $c\ge 1$.  By Lemma \ref{lem: Myrberg chracterization}, we see  that $\gamma$ ends at a Myrberg limit point.  Hence, it remains to prove  that the assignment 
$$
\begin{aligned}
\Phi: \;  \mathbb N^\infty&\longrightarrow \Lambda G\\
\omega &\longmapsto \Phi(\omega)
\end{aligned}
$$
is injective. We refer to \cite[Lemma 3.20]{MWY25B} for a proof.  
\end{proof}

\subsection{Horofunction boundary and Gromov boundary}\label{sec: horofunction boundary}
We first recall the definition of horofunction boundary and setup the notations for further reference. 

Let $X$ be a (possibly non-proper) metric space with a basepoint $o\in X$. Let $\mathrm{Lip}_1^o(X)$ denote the set of 1-Lipchitz functions that vanish at $o$. The function space $\mathrm{Lip}_1^o(X)$ is compact endowed  with the compact-open topology. For  each $y \in  X$, we define a Lipschitz map $b_y^o:  X \to \mathbb R$ in $\mathrm{Lip}_1^o(X)$    by $$\forall x\in X:\quad b^o_y(x)=d(x, y)-d(o,y).$$ The closure $\overline{X}_h$  of $\{b^o_y: y\in  \U\}$  defines a compact space which we call \textit{horofunction compactification}, in which  $ X$ is dense, but not necessarily open though.  Explicitly,  $\overline{X}_h$ consists of all pointwise limits  of $b_{y_n}$ for some sequence of points $y_n\in X$, which we  denote by $b_\xi$ and write $y_n\to\xi$.  According to the context, we use $\xi$ and $b_\xi$ to denote  the  elements  in $\overline{X}_h$.  The complement, denoted by $\hU$, of $ \U$ in $\bU_h$ is called  the \textit{horofunction boundary}. All elements in $\overline{X}_h$ are called \textit{horofunctions}.  If $X$ is a separable metric space, then $\overline{X}_h$ is metrizable by \cite[Proposition 3.1]{MT}. 

\begin{rem}\label{rem: topology on horofunctions}
The compact-open topology is the same as the pointwise convergence topology on $\mathrm{Lip}_1^o(X)$. When $X$ is proper, it is also the same as the uniform convergence topology on bounded sets. In general, $\mathrm{Lip}_1^o(X)$ with the latter topology may not be  compact.  Let us illustrate this for the example that $X$ is a locally infinite tree.
\begin{enumerate}
    \item 
    The above horofunction compactification $\overline{X}_h$ is the union of $X$ and the ends of $X$. The vertices in $X$ with finite valence are isolated points, while the others are accumulation points of the geodesic rays (i.e. ends of $X$) issuing from it.
    \item 
    The accumulation points of $X$ in $\mathrm{Lip}_1^o(X)$ under the uniform convergence topology on bounded sets are just the ends of $X$. In general, if $X$ is a CAT(0) space, then the accumulation points form exactly the visual boundary of $X$.
\end{enumerate}

\end{rem}


 

The topology of horofunction boundary is independent of  the choice of   basepoints, since if $d(x, y_n)-d(o, y_n)$ converges as $n\to \infty$, then so does $d(x, y_n)-d(o', y_n)$ for any $o'\in \U$. Moreover, the corresponding horofunctions differ by an additive amount: $$b^o_\xi(\cdot)-b_\xi^{o'}(\cdot)=b_\xi^o(o'),$$ so we will omit the upper index $o$. Every isometry $\phi$ of $X$ induces a homeomorphism on $\overline{X}_h$:  
$$
\forall y\in X:\quad\phi(\xi)(y):=b_\xi(\phi^{-1}(y))-b_\xi(\phi^{-1}(o)).
$$

\subsection{Finite difference relation and boundary comparison}\label{sec: finite difference relation}
Two horofunctions $b_\xi, b_\eta$ have   \textit{$K$-finite difference} for some $K\ge 0$ if the $L^\infty$-norm of their difference is $K$-bounded: $$\|b_\xi-b_\eta\|_\infty = \sup_{x\in \U}|b_\xi(x)-b_\eta(x)|\le K.$$ 
We say that $b_\xi,b_\eta$ has finite difference if they have \textit{$K$-finite difference} for some $K$ depending on $\xi,\eta$. 
The   \textit{$[\cdot]$-class} of     $b_\xi$ consists of  horofunctions $b_\eta$ so that $b_\xi, b_\eta$ have   finite difference.  The loci   $[b_\xi]$  of    horofunctions $b_\xi$ form a \textit{finite difference equivalence relation} $[\cdot]$ on $\hU$. The \textit{$[\cdot]$-closure} denoted as $[\Lambda]$ of a subset $\Lambda\subseteq \hU$ is the union of $[\cdot]$-classes of all points in $\Lambda$. We say that $\Lambda$ is \textit{saturated} if $[\Lambda]=\Lambda$.

Note that all horofunctions $b_x$ with $x\in X$ are in the same $[\cdot]$-class denoted by $[X]$. 
The following fact follows directly by definition.
\begin{lem}\label{bddLocusLem}
Let $b_{x_n} \to b_\xi\in \hU$ and  $b_{y_n}\to b_\eta\in \hU$ with $x_n,y_n\in X$. If  $\sup_{n\ge 1}d(x_n, y_n)<\infty$, then  $[b_\xi]=[b_\eta]$. 
\end{lem}

\textbf{Convention.} Until the end of this subsection, assume that $X$ is a $\delta$-hyperbolic graph with each edge assigned unit length. For technical reasons, we consider the horofunction compactification of the vertex set of $X$ so that horofunctions are integer valued. This loses  no essentials since its $[\cdot]$-closure recovers the whole compactification of $X$.

Let $\partial_h^\infty X$ denote the set of horofunctions $b_\xi$ with $\inf(b_\xi)=-\infty$. It is clear that the minimum of $b_x$ with $x\in X$ is realized  at the unique point $x$. Thus, $X$ is disjoint with $\partial_h^\infty X$, so  $\partial_h^\infty X$ is necessarily contained in $\partial_h X=\overline{X}_h\setminus X$, which justifies the notations.  We now recall the local minimum map defined by Maher-Tiozzo \cite{MT}.
\begin{defn}\cite[Definition 3.11]{MT}\label{defn: local minimal map}
The \textit{local minimum} map $\pi: \overline{X}_h\to X\cup \partial X$ is defined as follows.
\begin{enumerate}
    \item If $\xi \in  \overline{X}_h\setminus \partial_h^\infty X$,  then define $$\Pi(\xi):=\{x\in X: b_\xi(x)\le \min b_\xi\}$$
where the minimum exists since  $X$ is assumed to be a graph and $b_\xi$ is integer valued. 
    \item 
    If $\xi \in \partial_h^\infty X$,  then choose a sequence of points $x_n\in X$ so that $b_\xi(x_n)\to -\infty$, and define $$\Pi(\xi):=\lim_{n\to\infty} x_n$$
\end{enumerate}    
\end{defn}
\begin{rem}\label{rem: finite horofunctions}
If $X$ is a proper space, then  $\overline{X}_h= X\cup \partial_h^\infty X$. This equality also holds in many non-locally finite examples  (e.g.  $X$ is a locally infinite tree as noted in Remark \ref{rem: topology on horofunctions}, and is a locally infinite quasi-median graph in \cite[Theorem 4.38]{MWY25B}). In general, it is possible that $\overline{X}_h\setminus (X\cup \partial_h^\infty X)$ is  non-empty.  That is, there exists a horofunction $h$ (e.g. Example \ref{example-horofunctions}) with at least two points realizing the infimum of $h$.  
\end{rem}

\begin{lem}\label{localminimalmap}\cite[Lemma 3.12]{MT}
Assume that $X$ is a possibly non-proper $\delta$-hyperbolic geodesic space for some $\delta>0$. Then there exists a constant $K=K(\delta)$ so that  the local minimum map $\Pi: \overline{X}_h\to X\cup \partial X$ is a  $G$-equivariant map  with the following properties.
\begin{enumerate}
    \item $\Pi: \partial_h^\infty X\to \partial X$ is a $G$-equivariant continuous map, where any two horofunctions in $\Pi^{-1}(p)$ are $K$-bounded for every $p\in \partial X$.
    \item The image $\Pi(b_\xi)$ has  diameter at most $K$ for any $\xi\in \overline{X}_h\setminus \partial_h^\infty X$.
\end{enumerate}
\end{lem}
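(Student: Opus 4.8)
The plan is to derive everything from one translation of $\delta$-hyperbolicity into horofunction language. Writing $b_{y_k}\to b_\xi$ for the defining sequence and recalling $b_\xi(w)-b_\xi(w')=\lim_k(d(w,y_k)-d(w',y_k))$, I would feed $d(u,y_k)=d(o,y_k)+b_\xi(u)+o_k(1)$ into the thin–triangle inequality $\langle u,v\rangle_o\ge\min\{\langle u,y_k\rangle_o,\langle v,y_k\rangle_o\}-\delta$ and pass to the limit to obtain
\begin{equation*}
2\langle u,v\rangle_o\ \ge\ \min\{\,d(o,u)-b_\xi(u),\ d(o,v)-b_\xi(v)\,\}-2\delta\qquad(u,v\in X).\tag{$\star$}
\end{equation*}
This is the only place non-properness genuinely intervenes: $y_k$ need not sub-converge in $\partial X$, so compactness of the boundary is unavailable and all estimates must be run through $(\star)$ and thin triangles. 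As an immediate consequence, for $\xi\in\partial_h^\infty X$ and $x_n$ with $b_\xi(x_n)\to-\infty$ one has $d(o,x_n)\ge -b_\xi(x_n)\to\infty$, so $(\star)$ with $u=x_n,v=x_m$ forces $\langle x_n,x_m\rangle_o\to\infty$; hence $(x_n)$ converges to infinity, and interleaving two such sequences shows the limit in $\partial X$ is independent of the choice, so $\Pi$ is well defined. $G$-equivariance is formal from $b_{\phi\xi}(w)=b_\xi(\phi^{-1}w)-b_\xi(\phi^{-1}o)$, which preserves $\inf=-\infty$, permutes minimizing sets in case (1), carries minimizing sequences to minimizing sequences, and intertwines with the homeomorphic $G$-action on $\partial X$.

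\emph{Bounded image (property (2)).} For $\xi\in\partial_h^\infty X$ this is vacuous, so suppose $m=\min b_\xi$ is finite and $x,z\in\Pi(\xi)$, i.e. $b_\xi(x)=b_\xi(z)=m$. Fix a large $k$ and examine the $\delta$-thin triangle $x,z,y_k$: the point $c_k\in[x,z]$ at distance $\langle z,y_k\rangle_x=\tfrac12(d(x,z)+b_{y_k}(x)-b_{y_k}(z))$ from $x$ lies within $\delta$ of $[x,y_k]$, and since $b_{y_k}(x)-b_{y_k}(z)\to0$ we get $d(x,c_k)\to\tfrac12 d(x,z)$, so $c_k$ converges to the midpoint $c$ of $[x,z]$. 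As $b_{y_k}$ decreases at unit speed along $[x,y_k]$ from $x$, a point $c_k'\in[x,y_k]$ with $d(c_k,c_k')\le\delta$ gives $b_{y_k}(c_k)\le b_{y_k}(x)-d(x,c_k')+\delta\le b_{y_k}(x)-\tfrac12 d(x,z)+2\delta+o_k(1)$; letting $k\to\infty$, $m\le b_\xi(c)\le m-\tfrac12 d(x,z)+2\delta$, whence $d(x,z)\le 4\delta$.

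\emph{Comparison principle, boundedness of fibres, and continuity.} The heart of (1) is the claim that for $\xi\in\partial_h^\infty X$ with $p=\Pi(\xi)$ and a geodesic ray $\gamma$ from $o$ to $p$, the horofunction $b_\xi$ differs from the Busemann function $b_\gamma\colon w\mapsto\lim_t(d(w,\gamma(t))-t)$ by at most a constant $C(\delta)$. To prove it I would fix $w$, take $x_n\to p$ with $b_\xi(x_n)\to-\infty$, note that $\langle x_n,y_k\rangle_o=\tfrac12(d(o,x_n)-b_\xi(x_n))+o_k(1)$ forces $[o,x_n]$ and $[o,y_k]$ to $\delta$-fellow travel for a length $\ge -b_\xi(x_n)-o_k(1)\to\infty$, and combine this with $[o,x_n]\to\gamma$ to conclude that for $n$ then $k$ large the segments $[o,y_k]$ and $\gamma$ $\delta$-fellow travel past a length $L(n)\to\infty$; a thin–triangle computation then gives $d(w,y_k)-d(o,y_k)=d(w,\gamma(L(n)))-L(n)+O(\delta)$, and letting $k\to\infty$ and then $n\to\infty$ yields $|b_\xi(w)-b_\gamma(w)|\le C(\delta)$. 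If $\Pi(\xi)=\Pi(\eta)=p$ then $b_\xi,b_\eta$ are each within $C(\delta)$ of $b_\gamma$, so $\|b_\xi-b_\eta\|_\infty\le 2C(\delta)$; taking $K$ to dominate this and the $4\delta$ above gives the single constant in the statement. For continuity on $\partial_h^\infty X$: if $\xi_n\to\xi$ there, put $p=\Pi(\xi)$, $p_n=\Pi(\xi_n)$, fix a ray $\gamma$ to $p$ and a large $t$; the comparison principle gives $b_\xi(\gamma(t))\le -t+C(\delta)$, so pointwise convergence gives $b_{\xi_n}(\gamma(t))\le -t+C(\delta)+1$ for $n$ large, and applying $(\star)$ to $\xi_n$ with $u=\gamma(t)$ and $v$ far toward $p_n$ (where $b_{\xi_n}$ is very negative) forces $\langle\gamma(t),v\rangle_o\ge t-O(\delta)$, placing $\gamma(t)$ in the $O(\delta)$-neighbourhood of $[o,p_n]$; since $\gamma(t)$ also lies on $[o,p]$ at distance $t$ from $o$, the two rays $\delta$-fellow travel up to distance $t-O(\delta)$, i.e. $\langle p,p_n\rangle_o\ge t-O(\delta)$, and as $t$ is arbitrary, $p_n\to p$.

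\emph{Main obstacle.} The substantive step is the comparison principle: a horofunction in $\partial_h^\infty X$ is determined, up to an additive error depending only on $\delta$, by the single boundary point $\Pi(\xi)$. The difficulty is exactly the non-properness — the approximating sequence $y_k$ may fail to sub-converge in $\partial X$ and may stray arbitrarily far from $[o,p]$ — so $b_\xi$ cannot be linearised along $[o,p]$ via compactness of the boundary; one must instead manufacture, by hand, a long fellow-travelling segment between $[o,y_k]$ and $\gamma$ out of the sole information that $b_\xi$ takes arbitrarily negative values along some sequence tending to $p$. Once this coarse rigidity is in place, well-definedness, $G$-equivariance, the diameter bound and the continuity statement all fall out of $(\star)$ and thin-triangle geometry.
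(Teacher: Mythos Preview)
The paper does not prove this lemma; it is quoted verbatim from Maher--Tiozzo \cite{MT} with a citation and no argument. So there is nothing in the paper to compare against, and your write-up is in fact supplying what the paper omits. Judged on its own terms, your outline is essentially the Maher--Tiozzo argument and is sound: the Gromov-product inequality $(\star)$ is exactly the right substitute for boundary compactness, the thin-triangle midpoint trick gives the $4\delta$ diameter bound on the minimizing set, and the comparison with a Busemann function along a ray to $p=\Pi(\xi)$ delivers both the fibre bound and continuity.

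One small repair is needed. You invoke ``a geodesic ray $\gamma$ from $o$ to $p$'', but in a non-proper hyperbolic space such a ray need not exist; the paper itself only guarantees a \emph{quasi}-geodesic ray with constants depending on $\delta$ (see the paragraph introducing $\partial X$). This is harmless for your argument---all the fellow-travelling and Busemann estimates go through for a $(c,c)$-quasi-ray with $c=c(\delta)$, at the cost of enlarging $C(\delta)$---but the sentence should be rewritten accordingly. A second cosmetic point: in the diameter argument you pass from $b_{y_k}(c_k)$ to $b_\xi(c)$; strictly speaking you need the $1$-Lipschitz bound $|b_{y_k}(c_k)-b_{y_k}(c)|\le d(c_k,c)\to 0$ before taking the limit, which you use implicitly in the $o_k(1)$. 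With those two tweaks the proof is complete.
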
 

In the remainder of this subsection, we fix  a loxodromic isometry  $g$ on $X$. The preimages under $\Pi$ of the two fixed points of $g$ in $\partial X$ gives the two  $[\cdot]$-classes $[g^-]\ne [g^+]$ in $\partial_h^\infty X$ which are preserved by $g$. Let $\ax(g)$ denote the quasi-axis and $\pi_{\ax(g)}: X\to \ax(g)$ the shortest projection map. We wish to extend the map to the boundary points in $\hU$. 

\begin{lem}\label{lem: shortest projection for bdry points}
There exists a constant $D=D(\delta)>0$ with the following property. 

Consider    $\xi,\eta$ in $\partial_h^\infty X$ with $[\xi]=[\eta]$, and let $x_n,y_n\in X$   so that $x_n\to \xi$ and $y_n\to \eta$. Then for some large $n_0$, the set  $\{\pi_{\ax(g)}(x_n),\pi_{\ax(g)}(y_n): n\ge n_0\}$ has  diameter at most   $D$.      
\end{lem}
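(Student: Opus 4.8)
The goal is to control the shortest projections $\pi_{\ax(g)}(x_n)$ and $\pi_{\ax(g)}(y_n)$ when $x_n \to \xi$ and $y_n \to \eta$ with $[\xi] = [\eta]$ and both in $\partial_h^\infty X$. The plan is to argue in two stages: first, a \emph{within-one-sequence} bound showing that $\pi_{\ax(g)}(x_n)$ is Cauchy (essentially bounded tail) along a sequence defining a single horofunction in $\partial_h^\infty X$; second, a \emph{cross-sequence} bound using the finite difference hypothesis $[\xi]=[\eta]$.

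\begin{proof}[Proof sketch]
Write $\alpha = \ax(g)$ and let $\pi = \pi_\alpha : X \to \alpha$ be the shortest projection, which is coarsely well defined and coarsely Lipschitz with constants depending only on $\delta$ and the quasi-geodesic constant of $\alpha$ (standard hyperbolic projection estimates).

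\textbf{Step 1: one sequence.} Fix $x_n \to \xi$ with $\xi \in \partial_h^\infty X$, so $b_\xi(x_n) \to -\infty$; in particular $d(o,x_n) \to \infty$ and $\langle o, x_n\rangle$ relative to the basepoint of $\alpha$ behaves controllably. The key point is that for a horofunction with infimum $-\infty$, the defining sequence $x_n$ converges to a well-defined Gromov boundary point $\Pi(\xi) = p \in \partial X$ (Lemma \ref{localminimalmap}), hence $\langle x_n, x_m\rangle_o \to \infty$. Now I use the standard fact that in a $\delta$-hyperbolic space, if $\langle x_n, x_m \rangle_o$ is large, then the geodesics $[o,x_n]$ and $[o,x_m]$ fellow-travel for a long initial segment, and consequently their projections to $\alpha$ agree up to a bounded error: the projection $\pi(x)$ depends, up to $O(\delta)$, only on the first $O(1)$-thick portion of $[o,x]$ once $x$ is ``past'' $\alpha$ in the relevant sense. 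More precisely, I would invoke the Behrstock-type inequality / bounded geodesic image: $\pi(x_n)$ can only vary when $[o,x_n]$ passes within bounded distance of $\alpha$; once $x_n \to p$, either $p \in \alpha^{\pm}$ (the endpoints of $\alpha$) or $[o,p)$ projects to a bounded segment of $\alpha$, and in both cases $\{\pi(x_n) : n \ge n_0\}$ has diameter $\le D_1(\delta)$ for some $n_0$. This gives the desired bound for a single sequence.

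\textbf{Step 2: cross-sequence via finite difference.} Since $[\xi] = [\eta]$, we have $\|b_\xi - b_\eta\|_\infty \le K$ for some $K = K(\xi,\eta)$; by Lemma \ref{localminimalmap}(1) (or directly), this forces $\Pi(\xi) = \Pi(\eta) = p \in \partial X$ — two horofunctions in $\partial_h^\infty X$ at bounded $L^\infty$-distance have the same ``minimum at infinity.'' Indeed, if $b_\xi(x_n) \to -\infty$ then $b_\eta(x_n) \le b_\xi(x_n) + K \to -\infty$, so the sequence $x_n$ also defines $\eta$'s local minimum direction, i.e. $\Pi(\eta) = \lim x_n = p$ as well. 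Hence $y_n \to p$ too. Now apply Step 1 to the single boundary point $p$: both $\{\pi(x_n)\}$ and $\{\pi(y_n)\}$ have bounded-diameter tails, and since they both converge (coarsely) to the projection of $p$, comparing them gives $\mathrm{diam}\{\pi(x_n), \pi(y_n) : n \ge n_0\} \le D(\delta)$ with $D$ depending only on $\delta$.

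\textbf{Main obstacle.} The subtle point is Step 1: making precise, \emph{with constants depending only on $\delta$}, the claim that the projection to $\alpha$ of a sequence converging to a fixed Gromov boundary point eventually stabilizes. The case analysis on whether $p$ lies in $\partial\alpha = \{g^-,g^+\}$ or not, and handling the quasi-geodesic (rather than geodesic) nature of $\alpha$, is where care is needed; the cleanest route is probably the bounded geodesic image theorem applied to $[o, x_n]$ versus $[o, x_m]$, using $\langle x_n, x_m\rangle_o \to \infty$ to say these geodesics diverge only far from $\alpha$ for large $n,m$. Everything else is routine bookkeeping with hyperbolic inequalities.
\end{proof}
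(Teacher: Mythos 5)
Your route is genuinely different from the paper's. The paper never passes to the Gromov boundary: it argues directly that if $d_{\ax(g)}(x_n,y_n)>D$ then, by the contracting property of the axis, the geodesic $[x_n,y_n]$ runs alongside a long segment $[u,v]\subset \ax(g)$; along that segment the cocycle $b_\xi(u)-b_\xi(v)=\lim\big(d(u,x_n)-d(v,x_n)\big)$ is about $-d(u,v)$ (it decreases toward $x_n$) while $b_\eta(u)-b_\eta(v)$ is about $+d(u,v)$, so $\|b_\xi-b_\eta\|_\infty\ge 2d(u,v)-4R>K$, contradicting $[\xi]=[\eta]$. This uses the finite-difference hypothesis quantitatively and never needs to identify $\lim x_n$ with $\lim y_n$ in $\partial X$. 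Your plan---reduce to the statement that both sequences converge to the same Gromov boundary point $p$ and then apply bounded geodesic image to $[x_n,y_m]$, which eventually avoids a neighborhood of $\ax(g)$ when $p\notin\{g^-,g^+\}$---is viable, but as written it has two concrete gaps.

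First, your Step 1 assertion that the projections stabilize ``in both cases,'' including $p\in\{g^-,g^+\}$, is false: if $x_n\to g^+$ (e.g.\ $x_n=g^no$), then $\pi_{\ax(g)}(x_n)$ escapes along the axis and no tail of the projection set has bounded diameter. The lemma is only true, and is only used (see Definition \ref{defn: bdry projection for loxo} and Corollary \ref{lem: projection continuity}), under the implicit hypothesis $[\xi]\ne[g^\pm]$; you must exclude that case, not absorb it. Second, Step 2 hinges on ``$b_\xi(x_n)\to-\infty$,'' which does not follow from $b_{x_n}\to b_\xi$ even when $\inf b_\xi=-\infty$: in a tree, attach to a ray $\gamma$ at each $\gamma(t_n)$ a branch of length $R_n\ge t_n$ with endpoint $x_n$; then $b_{x_n}\to b_{\gamma(\infty)}$ pointwise while $b_{\gamma(\infty)}(x_n)=R_n-t_n\ge 0$. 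The conclusion you want ($\lim x_n=\lim y_n=p$ in $\partial X$) is still true---any sequence representing a horofunction in $\partial_h^\infty X$ converges in $\partial X$ to its image under the local minimum map, and $\Pi(\xi)=\Pi(\eta)$ follows by applying the finite-difference bound to an actual minimizing sequence for $b_\xi$---but your stated mechanism is the wrong one and must be replaced by an appeal to (the proof of) Lemma \ref{localminimalmap}. With those two repairs your argument goes through, at the cost of invoking more machinery than the paper's self-contained two-sided estimate.
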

\begin{proof}
By Lemma \ref{localminimalmap},  $|B_\xi(u,v)-B_\eta(u,v)|\le K$ for any $u,v\in X$. By the contracting property of $\ax(g)$,   there is a constant $R$ depending on the quasi-geodesicity constant of $\ax(g)$ with the following property. If $d_{\ax(g)}(x,y)>R$,  then $[x,y]$ intersects the $R$-neighborhood of $\ax(g)$ with diameter greater than $d_{\ax(g)}(x,y)-R$. Let $L$ denote the diameter of a fundamental domain for $\langle g\rangle$ on $\ax(g)$. Setting $D:=K+5R+2L$, we shall prove $d_{\ax(g)}(x_n,y_n)\le D$ for any $n\gg 1$.  

Indeed, assume by   contradiction that $d_{\ax(g)}(x_n,y_n)> K+5R+2L$ for infinitely many $n\ge 1$. By the above discussion, $[x_n,y_n]\cap N_R(\ax(g))$   has diameter greater than $K+4R+2L$. Up to  $\langle g\rangle$-translation of $[x_n,y_n]$, we may assume that there are two points $u,v\in \ax(g)$ so that $d(u,v)>4K-R$ and $u,v\in N_R([x_n,y_n])$ for these $n$. Let $u_n,v_n\in [x_n,y_n]$ so that $d(u,u_n),d(v,v_n)\le R$ and $d(x_n,u_n)<d(x_n,v_n)$. By definition,
$$B_\xi(u,v)=\lim_{x_n\to \xi} d(u,x_n)-d(v,x_n)\le -d(u,v)+2R$$ and $$B_\eta(u,v)=\lim_{x_n\to \xi} d(u,x_n)-d(v,x_n)\ge d(u,v)-2R$$
which yields $|B_\xi(u,v) - B_\eta(u,v)| \ge 2d(u,v)-4R >K$. This is a contradiction. The lemma is proved.
\end{proof}

\begin{defn}\label{defn: bdry projection for loxo}
We define a projection map $\pi_{\ax(g)}: \bU_h\setminus [g^\pm]\to \ax(g)$ as follows.

If $\xi\in \bU_h\setminus \partial_h^\infty X$,  define $\pi_{\ax(g)}(\xi)=\pi_{\ax(g)}(x)$ for some $x\in \Pi(\xi)$.

If $\xi\in \partial_h^\infty X$, fix a sequence of points $z_n\in X\to [\xi]$ and  define $\pi_{\ax(g)}(\xi):=\pi_{\ax(g)}(z_n)$ for any large $n\gg 1$.
\end{defn}
Of course, the map $\pi_{\ax(g)}$  depends on the choice of $x\in \Pi(\xi)$, and the sequence $\{z_n: n\ge 1\}$, so it is coarsely defined: for different choices, the projection differs up to finite uniform neighborhood. Even though, a certain coarse continuity holds as follows.
\begin{cor}\label{lem: projection continuity}
Let $D$ be as in Lemma \ref{lem: shortest projection for bdry points}.
Let $\xi\in \bU_h\setminus [g^\pm]$ and $x_n\in X \to \xi$. Then for any large enough $n\gg 1$,  $\pi_{\ax(g)}(x_n)$ is contained in the $2D$-neighborhood of $\pi_{\ax(g)}(\xi)$.     
\end{cor}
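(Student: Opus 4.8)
The statement asserts a coarse continuity for the (coarsely defined) projection map $\pi_{\ax(g)}$ extended to $X\cup \partial_h X\setminus [g^\pm]$, so the plan is to split into the same two cases used in Definition \ref{defn: bdry projection for loxo} and to reduce everything to Lemma \ref{lem: shortest projection for bdry points} together with Lemma \ref{bddLocusLem}. First I would record the trivial sanity check: if $\xi\in X$ then for $x_n\to\xi$ eventually $x_n=\xi$ (vertices are isolated or at worst the statement is vacuous up to adjusting $n$), and the continuity of the ordinary shortest-projection map $\pi_{\ax(g)}$ on the contracting quasi-axis handles the case $x_n\in X$; so the content is when $\xi\in\partial_h X\setminus[g^\pm]$.

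The main case: let $\xi\in\partial_h X\setminus[g^\pm]$ and $x_n\to\xi$ in $X\cup\partial_h X$. The key point is to pass from the convergence $x_n\to\xi$ in the horofunction topology to a statement about \emph{representing sequences in $X$} that are uniformly close. Concretely, for each $n$ choose a point $z_n^{(k)}\in X$ with $z_n^{(k)}\to x_n$ as $k\to\infty$ (taking $z_n^{(k)}=x_n$ when $x_n\in X$), and a sequence $w_m\in X$ with $w_m\to\xi$. By a diagonal argument, since $b_{x_n}\to b_\xi$ pointwise, one can select a single sequence $u_n\in X$ (e.g.\ $u_n=z_n^{(k(n))}$ for suitable $k(n)$) with $u_n\to\xi$ in $\overline X_h$; then by Lemma \ref{bddLocusLem} (or rather its proof technique) this does not directly bound $d(u_n,w_n)$, so instead I would argue as follows. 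Fix the defining sequence $z_n\in X$ with $z_n\to[\xi]$ used to define $\pi_{\ax(g)}(\xi)$ via Definition \ref{defn: bdry projection for loxo}. For each fixed large $n$, the convergence $x_n\to\xi$ means $b_{x_n}$ is pointwise close to $b_\xi$; choosing $u_n\in X$ representing $x_n$ accurately enough on the finitely many relevant points (those near $\ax(g)$ between the two projection points), I get that $u_n$ and $z_{m}$ for large $m$ have the property that their horofunctions agree up to a uniform constant on the segment of $\ax(g)$ joining $\pi_{\ax(g)}(u_n)$ and $\pi_{\ax(g)}(z_m)$; running the Busemann-function comparison exactly as in the proof of Lemma \ref{lem: shortest projection for bdry points} (the inequalities $B_\xi(u,v)\le -d(u,v)+2R$ versus $B_{x_n}(u,v)\ge d(u,v)-2R$ when the projections are far apart) forces $d_{\ax(g)}(\pi_{\ax(g)}(u_n),\pi_{\ax(g)}(z_m))\le D$ for $n,m$ large. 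Since $\pi_{\ax(g)}(\xi)$ is within a uniformly bounded amount of $\pi_{\ax(g)}(z_m)$ (same lemma, applied with $\eta=\xi$) and $\pi_{\ax(g)}(x_n)$ is within a uniformly bounded amount of $\pi_{\ax(g)}(u_n)$ by the coarseness in Definition \ref{defn: bdry projection for loxo}, adding the constants gives the $2D$-neighborhood bound after possibly enlarging $D$ to absorb the coarseness constants (the statement already tolerates this since $D$ depends only on $\delta$ through Lemma \ref{lem: shortest projection for bdry points}).

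The step I expect to be the main obstacle is the passage from the pointwise/compact-open convergence $x_n\to\xi$ to a \emph{uniform} comparison of Busemann functions on the relevant part of $\ax(g)$: a priori $b_{x_n}\to b_\xi$ only pointwise, and the location of the two projection points $\pi_{\ax(g)}(x_n)$ and $\pi_{\ax(g)}(\xi)$ is not known in advance, so one must be careful that the set of points of $\ax(g)$ on which one needs control does not escape to infinity as $n\to\infty$. The way around this is that if $d_{\ax(g)}(\pi_{\ax(g)}(x_n),\pi_{\ax(g)}(\xi))$ were large for infinitely many $n$, the contracting property of $\ax(g)$ (Lemma \ref{lem: shortest projection for bdry points}'s input) localizes the comparison to a bounded segment depending only on $\delta$ and the quasi-geodesic constants of $\ax(g)$; on that bounded segment pointwise convergence on finitely many vertices suffices, and the same contradiction as in Lemma \ref{lem: shortest projection for bdry points} applies. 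I would therefore phrase the argument as a proof by contradiction mirroring that lemma, which keeps the bookkeeping minimal and reuses the constant $D=K+5R+2L$ verbatim (up to the additive coarseness adjustment already built into the $2D$ in the statement).
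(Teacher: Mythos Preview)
Your proposal is essentially correct and expands on the paper's own proof, which is a two-sentence sketch invoking Lemma \ref{lem: shortest projection for bdry points} for $\xi\in\partial_h^\infty X$ and declaring that ``for other points, it is just the shortest projection of points in $X$, so the coarse continuity holds well.'' One correction to your sanity check: vertices need not be isolated in $\overline X_h$ --- the paper explicitly exhibits non-isolated vertices (Remark \ref{rem: topology on horofunctions}, Lemma \ref{lem: non-separating curve}) --- so the case $\xi\in X$ is not vacuous; however, your contradiction argument via the Busemann comparison on a bounded segment of $\ax(g)$ applies uniformly to that case as well, so this does not affect the overall plan.
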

\begin{proof}
The case   for boundary points  $\xi\in \partial_h^\infty X\setminus  [g^\pm]$ holds by Lemma 
\ref{lem: shortest projection for bdry points}. For the case $\xi\in X$ there is nothing to do, as $\pi_{\ax(g)}(x_n)$ is exactly the shortest projection of points in $X$. It remains to consider the case $\xi\in \partial_hX\setminus \partial^\infty X$. We recall $\Pi(\xi)=\{x\in X: b_\xi(x)\le \min b_\xi\}$ and $\Pi(x_n)=\{x_n\}$. Since $b_{x_n}\to b_{\xi}$ pointwise and horofunctions are integer-valued, it is straightforward to check that $x_n$ is eventually contained in $\Pi(\xi)$ so the conclusion follows as well in this case.
\end{proof}

\subsection{Recognizing the minimal and isolated points}\label{sec: minimal and isolated points}
In this subsection, assume that $X$ is a general metric graph.
We give a few elementary criteria regarding  which points in $\bU_h$ are minimal and isolated.  We say that a boundary point $\xi\in \partial_h X$ is \textit{minimal} if  its equivalent class $[\xi]$ is singleton.
The vertices  with finite degree are clearly isolated. In a locally infinite graph, dead ends are also isolated.

\begin{defn}
Let $o,x\in X$ be two points. We say that $x$ is a \textit{dead end} relative to $o$ if no geodesic from $o$ to $x$ could extend further: there is  no point $y\in X$ so that $d(o,y)=d(o,x)+1$.      
\end{defn}

\begin{lem}\label{lem: dead end is isolated}
Assume that $x\in X$ is a dead end relative to some $o\in X$. Then the horofunction $b_x$ is an isolated point in $\overline X_h$.    
\end{lem}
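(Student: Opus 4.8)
The plan is to show that a dead end $x$ relative to $o$ cannot be approximated by horofunctions $b_{y}$ with $y\neq x$, so that $b_x$ is isolated in $\overline{X}_h$. The key observation is that the value of $b_y$ at the point $x$ itself already detects whether $y=x$. Indeed, for any $y\in X$ we have $b_y(x)=d(x,y)-d(o,y)$, which equals $-d(o,x)$ precisely when $y$ lies on a geodesic from $o$ through $x$, i.e. when $d(o,y)=d(o,x)+d(x,y)$; and otherwise, by the triangle inequality applied with integer-valued distances on a graph, $b_y(x)\ge -d(o,x)+2$ (the parity/integrality forces a jump of at least $2$, since $d(o,y)+d(x,y)-d(o,x)$ is a nonnegative even integer — it is $0$ iff $y$ is on such a geodesic). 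So the first step is to record this dichotomy: either $b_y(x)=-d(o,x)$, or $b_y(x)\ge -d(o,x)+2$.

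The second step uses the dead-end hypothesis to rule out the first alternative for $y\neq x$. If $y$ lies on a geodesic from $o$ through $x$ and $y\neq x$, then $d(o,y)\ge d(o,x)+1$ and the geodesic $[o,y]$ passes through a point at distance $d(o,x)+1$ from $o$, contradicting the assumption that $x$ is a dead end relative to $o$. (More carefully: the subpath of $[o,y]$ from $o$ of length $d(o,x)+1$ ends at a vertex $z$ with $d(o,z)=d(o,x)+1$, which is exactly what the dead-end condition forbids.) Hence for every $y\in X$ with $y\neq x$ we have $b_y(x)\ge -d(o,x)+2$, while $b_x(x)=-d(o,x)$.

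The third step promotes this to a statement about the whole horofunction boundary. Consider the evaluation map $\mathrm{ev}_x:\overline{X}_h\to\mathbb{R}$, $b_\xi\mapsto b_\xi(x)$, which is continuous in the pointwise-convergence (compact-open) topology. The set $U:=\{b_\xi\in\overline{X}_h: b_\xi(x)<-d(o,x)+2\}=\mathrm{ev}_x^{-1}\big((-\infty,-d(o,x)+2)\big)$ is open. By the previous step, no $b_y$ with $y\in X\setminus\{x\}$ lies in $U$, so $U\cap\{b_y:y\in X\}=\{b_x\}$; since $\{b_y:y\in X\}$ is dense in $\overline{X}_h$, taking closures gives that $U$ is contained in the closure of $\{b_x\}$, hence $U=\{b_x\}$ (as $\overline{X}_h$ is a closed subset of $\mathrm{Lip}^o_1(X)$ which is Hausdorff, so points are closed). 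Thus $\{b_x\}=U$ is open, i.e. $b_x$ is isolated.

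I expect no serious obstacle here; the only point requiring a little care is the integrality/parity argument in the first step (that $b_y(x)$ jumps by at least $2$, not just $1$, when $y$ leaves the geodesics through $x$), which is exactly why the Convention restricting to integer-valued horofunctions on the vertex set was imposed, and the clean handling of the dense-subset-to-closure passage in the third step. If one prefers to avoid parity, one can instead argue directly that a net $b_{y_\alpha}\to b_x$ forces $d(x,y_\alpha)$ bounded (since $b_{y_\alpha}(x)\to b_x(x)=-d(o,x)$ and each $b_{y_\alpha}(x)\ge -d(o,x)$ with equality forcing $y_\alpha$ on a geodesic through $x$, excluded by dead-endedness unless $y_\alpha=x$), and then that eventually $y_\alpha=x$; this gives the same conclusion.
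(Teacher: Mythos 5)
Your proof is correct and is essentially the paper's argument in open-set clothing: the paper likewise uses that horofunctions on the graph are integer-valued, so a sequence $b_{y_n}\to b_x$ forces $b_{y_n}(x)=b_x(x)=-d(o,x)$ eventually, hence $d(o,y_n)=d(o,x)+d(x,y_n)$, which contradicts the dead-end hypothesis exactly as in your second step. One small correction: your parity claim is false in general graphs --- the quantity $d(o,x)+d(x,y)-d(o,y)$ (twice a Gromov product) need not be even when the graph is not bipartite (take $o,x,y$ the vertices of a $3$-cycle, where it equals $1$). This is harmless: integrality alone gives $b_y(x)\ge -d(o,x)+1$ for $y\ne x$, and running your third step with the open set $\mathrm{ev}_x^{-1}\bigl((-\infty,-d(o,x)+1)\bigr)$ yields the same conclusion, as does the fallback you sketch at the end.
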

\begin{proof}
The horofunction on a graph is integer-valued. If for some $x\in X$, $b_x$ is an accumulation point, then $b_{y_n}(x)=b_x(x)$ for a sequence of $y_n$ in $X$. This however implies $d(o,x)+d(x,y_n)=d(o,y_n)$, which contradicts that $x$ is a dead end.    
\end{proof}

We now give a criterion for $[\cdot]$-classes being minimal. 
\begin{defn}\label{defn: guardian}
Let $x,y,z$ be three points in $X$. We say that $y$ is a \textit{guard} between $x,z$ if every geodesic from $x$ to $z$ passes through $y$. 

If $z$ is a  boundary point in $\partial_h X$,  we say that $y$ is a \textit{guard} between $x,z$ provided that  for any $z_n\in X\to z$, $y$ is a guard between $x,z_n$ with all large $n\gg 0$.    
\end{defn}

\begin{lem}\label{lem: minimal class criterion}
Let $(x_n)$ and $(y_n)$ be two sequences of points in $X$ so that $x_n\to \xi\in \hU$ and  $y_n\to \zeta\in \hU$.  Assume that for any $z\in X$ there exists a large integer $n_0$ (depending on $z$) so that each $x_n$ with $n\ge n_0$ is a guard from $z$ to $y_m$ for all but finitely many $y_m$. Then $\zeta=\xi$.    
\end{lem}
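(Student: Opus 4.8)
The plan is to show that the two horofunctions $b_\xi$ and $b_\zeta$ agree as functions on $X$, which forces $\xi = \zeta$ in $\overline X_h$. Fix an arbitrary test point $z \in X$; I want to compute $b_\zeta(z) = \lim_{m} \big(d(z,y_m) - d(o,y_m)\big)$ and compare it with $b_\xi(z)$. The hypothesis gives an integer $n_0$ (depending on $z$) so that for every $n \ge n_0$, the point $x_n$ is a guard between $z$ and $y_m$ for all but finitely many $m$; that is, every geodesic from $z$ to $y_m$ passes through $x_n$. Guard-ness is additive along geodesics, so for such $n$ and $m$ we get the exact identity
\[
d(z, y_m) = d(z, x_n) + d(x_n, y_m).
\]
Subtracting $d(o,y_m)$ and letting $m \to \infty$ (along the subsequence avoiding the finitely many exceptions, which is harmless for the limit) yields
\[
b_\zeta(z) = d(z, x_n) + \lim_{m\to\infty}\big(d(x_n, y_m) - d(o, y_m)\big) = d(z, x_n) + b_\zeta(x_n),
\]
valid for every fixed $n \ge n_0$.

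Next I would repeat the computation with $z$ replaced by the basepoint $o$ itself: since the hypothesis is quantified over \emph{all} $z \in X$, there is $n_1$ so that for $n \ge n_1$ the point $x_n$ is a guard between $o$ and cofinitely many $y_m$, giving $b_\zeta(o) = d(o,x_n) + b_\zeta(x_n)$, i.e. $b_\zeta(x_n) = -d(o,x_n)$ because $b_\zeta(o) = 0$. Substituting back, for $n \ge \max(n_0, n_1)$,
\[
b_\zeta(z) = d(z,x_n) - d(o,x_n) = b_{x_n}(z).
\]
Thus $b_\zeta(z) = \lim_{n\to\infty} b_{x_n}(z) = b_\xi(z)$, since $x_n \to \xi$ means precisely pointwise convergence $b_{x_n} \to b_\xi$ on $X$. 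As $z \in X$ was arbitrary, $b_\xi = b_\zeta$ as functions on $X$, hence $\xi = \zeta$.

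The only genuinely delicate point is the interplay of the two "for all but finitely many" clauses: the guard property for the test point $z$ and for the basepoint $o$ each exclude a finite set of indices $m$, but since both exclusions are finite, for each fixed admissible $n$ one can pass to a cofinite subsequence of $(y_m)$ on which both identities $d(z,y_m) = d(z,x_n)+d(x_n,y_m)$ and $d(o,y_m) = d(o,x_n)+d(x_n,y_m)$ hold simultaneously, and this cofinite subsequence suffices to evaluate all the relevant limits. No hyperbolicity is actually needed here — the argument is purely metric — so I would not invoke $\delta$-thinness; the lemma is a soft consequence of the definition of guard and of the horofunction topology. One small bookkeeping subtlety to state cleanly: the constant $n_0$ genuinely depends on $z$, so the displayed equality $b_\zeta(z) = b_{x_n}(z)$ holds only for $n$ beyond a $z$-dependent threshold, but that is exactly what is needed to take the limit $n \to \infty$ and recover $b_\xi(z)$.
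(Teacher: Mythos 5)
Your proof is correct and follows essentially the same route as the paper's: apply the guard hypothesis to both the test point $z$ and the basepoint $o$ to obtain $d(z,y_m)-d(o,y_m)=d(z,x_n)-d(o,x_n)$ for $m$ cofinal, then pass to the limit in $m$ and $n$. Your write-up is in fact more careful than the paper's (which leaves the $z=o$ application implicit and contains a typo in the final displayed identity), but the underlying argument is identical.
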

\begin{proof}
We fix a basepoint $o$.
By assumption,  $x_n$ lies on two geodesics $[o,y_m]$ and $[z,y_m]$ for $m\gg 0$.
By definition, $b_\xi(z)=\lim_{n\to\infty} d(z,x_n)-d(o,x_n)$ and $b_\zeta(z)=\lim_{n\to\infty} d(z,y_n)-d(o,y_n)$.  Then $d(z,x_n)-d(o,x_n)=d(z,y_m)-d(o,x_m)$.  This shows the conclusion.  
\end{proof}

\begin{lem}\label{lem: accumulation point criterion}
Let  $\xi_n$ be a distinct sequence of boundary points in $\partial_h X$. Assume that $v$ is a vertex  so that given any $y\in X$, $v$ is a guard between $y$ and $\xi_n$ for all $n\gg 0$. Then $v$ is an accumulation point of $\xi_n$ in $\bU_h$.     
\end{lem}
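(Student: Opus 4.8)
The plan is to translate convergence in $\bU_h$ into pointwise agreement of horofunctions and then read off the values $b_{\xi_n}(x)$ directly from the guard hypothesis. Recall that the topology on $\bU_h\subset \mathrm{Lip}_1^o(X)$ is the topology of pointwise convergence, and that under the standing Convention all horofunctions take integer values. Hence a neighborhood basis at $b_v$ is given by the sets $U_F=\{b\in\bU_h:\ b(x)=b_v(x)\text{ for all }x\in F\}$, as $F$ ranges over finite subsets of $X$. Moreover $b_v$ lies in $X\subset\bU_h$ whereas every $\xi_n$ lies in $\partial_h X=\bU_h\setminus X$, so $b_v\ne b_{\xi_n}$ for all $n$. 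It therefore suffices to prove that every $U_F$ contains $\xi_n$ for infinitely many $n$.

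First I would fix, for each $n$, an approximating sequence $z^{(n)}_j\in X$ with $z^{(n)}_j\to\xi_n$ as $j\to\infty$, so that $b_{\xi_n}(x)=\lim_{j\to\infty}\bigl(d(x,z^{(n)}_j)-d(o,z^{(n)}_j)\bigr)$ for every $x\in X$. Now let $F=\{x_1,\dots,x_m\}$ be a finite set. Applying the guard hypothesis to each of the finitely many points $y\in F\cup\{o\}$, there is an integer $N$ such that for every $n\ge N$ the vertex $v$ is simultaneously a guard between $y$ and $\xi_n$ for all $y\in F\cup\{o\}$. Fix such an $n$. By Definition \ref{defn: guardian}, for all $j$ large enough (depending on $n$) the point $v$ lies on every geodesic $[y,z^{(n)}_j]$ with $y\in F\cup\{o\}$, so $d(y,z^{(n)}_j)=d(y,v)+d(v,z^{(n)}_j)$. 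Subtracting the identity for $y=x_i$ from the one for $y=o$ cancels the common term $d(v,z^{(n)}_j)$ and gives
$$
d(x_i,z^{(n)}_j)-d(o,z^{(n)}_j)=d(x_i,v)-d(o,v)=b_v(x_i)\qquad(1\le i\le m),
$$
and letting $j\to\infty$ yields $b_{\xi_n}(x_i)=b_v(x_i)$ for every $i$; that is, $\xi_n\in U_F$.

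Hence $\xi_n\in U_F$ for every $n\ge N$, which are infinitely many distinct boundary points, none of which equals $b_v$. Since $F$ was an arbitrary finite subset of $X$, every neighborhood of $b_v$ contains $\xi_n$ for infinitely many $n$, so $b_v$ is an accumulation point of $(\xi_n)$ in $\bU_h$; in fact the same estimate shows $\xi_n\to b_v$. I do not expect a genuine obstacle here: the only care needed is to keep the two nested ``eventually'' quantifiers straight (the first over the index $n$, with threshold depending on the finite set $F$; the second over the approximation index $j$, with threshold depending on $n$ and the chosen sequences $z^{(n)}_\bullet$), and to use integer-valuedness of horofunctions to replace $\varepsilon$-balls by neighborhoods of exact agreement.
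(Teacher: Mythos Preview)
Your proof is correct and follows essentially the same approach the paper intends: the paper leaves this to the reader, pointing to the analogous Lemma \ref{lem: minimal class criterion}, whose proof likewise factors distances through the guard vertex to show the horofunctions agree pointwise. Your write-up is in fact more careful than the paper's sketch, correctly handling the two nested ``eventually'' quantifiers and using integer-valuedness to pass to the exact-agreement neighborhood basis.
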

\begin{proof}
The proof is similar to the one of Lemma \ref{lem: minimal class criterion}. We leave it to the interested reader.   
\end{proof}

\section{Extreme boundary from projection complex}\label{SSecProjectionComplex}

In this section, we  first recall  the work of Bestvina-Bromberg-Fujiwara  \cite{BBF} on the construction of  a quasi-tree. We  then study its horofunction boundary and  construct the loxodromic elements with north-south dynamics on it, which leads to a proof of Theorem \ref{MainThm}. 

\subsection{Projection complex}\label{subsec: projection complex}

\begin{defn}[Projection axioms]\cite{BBF}
\label{defn:projaxioms}
Let $\f$ be a collection of   metric spaces equipped with (set-valued) projection maps  $$\Big\{\pi_{U}:  \f\setminus \{U\}\to U\Big\}_{U\in \f}$$  Denote $d_{U}(V,W) := \mathrm{diam}(\pi_{U}(V) \cup \pi_{U}(W))$ for $V\ne U\ne W \in \f$.  The pair $(\f, \{\pi_{U}\}_{U\in \f})$ satisfies {\it projection axioms} for a {   constant} $\kappa \ge 0$ if
 \begin{enumerate}
     \item
     \label{axiom1}  $\mathrm{diam}(\pi_{U}(V)) \le \kappa$ when $U \neq V$.
     \item
     \label{axiom2} If $U,V,W$ are distinct and $d_{V}(U, W) > \kappa$ then $d_{U}(V,W) \le \kappa$.
     \item
     \label{axiom3} The set $\{ U \in \f \,:\, d_{U}(V, W) > \kappa \}$ is finite for $V \neq W$.
 \end{enumerate}
 \end{defn}

Assume that $G$ admits a non-elementary acylindrical action on a hyperbolic space $X$. Let $F$ be a set of three independent loxodromic isometries on $X$. Let $\f=\{g\ax(f): g\in G, f\in F\}$ be the  system of all $G$-translated $f$-axis of $f\in F$. By Lemma \ref{lem: acylin independent}, $\f$ satisfies the projection axioms. 
 
For any $X,U,V,W\in \f$,   the following triangle inequality holds:
\[
d_X(V,W)\le d_X(V,U)+d_X(U,W).
\]

Given   $K>0$ and $V, W\in \f$, define the interval-like set $$\f_K(V,W):=\{U\in\f: d_U(V,W)>K\}.$$ 


\begin{defn}
The \textit{projection complex} $\p_K(\f)$  is  a graph with the vertex set consisting of the elements in $\f$. Two  vertices $U$ and $V$ are connected if $\f_K(U,V)=\emptyset$. We equip $\PC$ with a length metric $d_{\p}$ induced by assigning unit length to each edge.
\end{defn}

From now on, we  fix a   large enough constant  $K$ relative to $\kappa$,  so the projection complex $\p_K(\f)$ is connected by  \cite[Proposition 3.7]{BBF}.  Moreover, it is a quasi-tree of infinite diameter on which $G$ acts non-elementarily by \cite[Theorem 3.16, Corollary 3.16]{BBF}. We even know that $G$ acylindrically on $\p_K(\f)$ by  \cite[Theorem 5.6]{BBFS}, though we do not need it below. 


The following two facts will be very useful to analyze the horofunction boundary.

\begin{lem}\label{ForcingLem}\cite[Lemma 3.18]{BBF}
For any $K\gg 0$ there exists $\hat K>K$ such that $\f_{\hat K}(U,V)$ is contained in the vertex set of any geodesic  from $U$ to $V$ in $\PC$. That is, every element in $\f_{\hat K}(U,V)$ is a guard  between  $U$ and $V$ in sense of Definition \ref{defn: guardian}.
\end{lem}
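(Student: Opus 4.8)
This is \cite[Lemma 3.18]{BBF}, and the plan is to recall their argument, which exhibits the statement as a \emph{bounded geodesic image} phenomenon in the projection complex: a geodesic of $\PC$ that avoids a vertex $W$ has uniformly bounded projection into the space $W$. Concretely, one aims for a constant $M = M(\kappa,K)$ such that for every geodesic $\gamma = (W_0 = U, W_1,\dots,W_n = V)$ of $\PC$ with $W\notin\{W_0,\dots,W_n\}$ one has $\mathrm{diam}\big(\bigcup_t \pi_W(W_t)\big)\le M$. Given this, put $\hat K := \max\{K,M\}+1$: if $d_W(U,V) = \mathrm{diam}(\pi_W(U)\cup\pi_W(V)) > \hat K$, then no geodesic from $U$ to $V$ can omit $W$, which is exactly the statement that $W$ is a guard between $U$ and $V$ in the sense of Definition \ref{defn: guardian}.

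To prove the bounded geodesic image estimate, the elementary input is that consecutive vertices $W_t, W_{t+1}$ of $\gamma$ are adjacent, so $\f_K(W_t,W_{t+1}) = \emptyset$ and hence $d_W(W_t,W_{t+1})\le K$ for every $t$ (using $W\notin\{W_t,W_{t+1}\}$). Consequently $t\mapsto d_W(U,W_t)$ starts at $d_W(U,U)\le\kappa$ by Axiom (\ref{axiom1}) and changes by at most $K$ at each step; the real content is that it cannot climb far along a \emph{geodesic}. To see this one invokes the Behrstock order of BBF: for a threshold $L$ with $\kappa\ll L\ll K$, the set $\f_L(U,V)$ is finite by Axiom (\ref{axiom3}) and is totally ordered by Axiom (\ref{axiom2}), say $Z_1\prec\cdots\prec Z_m$, with $U$ and $V$ as extremal elements; and—this is where $K$ must be taken large relative to $L,\kappa$, as in \cite[Proposition 3.7]{BBF}—the associated ``standard path'' $(U,Z_1,\dots,Z_m,V)$ is itself a geodesic of $\PC$, of length $m+1$. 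Since $d_W(U,V)>\hat K>L$ we have $W=Z_k$ for some $k$, so this particular geodesic does pass through $W$.

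The final, and hardest, step is to compare an arbitrary geodesic $\gamma$ from $U$ to $V$ with the standard one and conclude that $\gamma$ too must visit $W$. This is carried out by propagating the Behrstock order along $\gamma$ by repeated use of Axiom (\ref{axiom2}): one shows that $\gamma$ crosses the high-projection domains $Z_i$ in the prescribed order—equivalently that $\PC$ satisfies Manning's bottleneck criterion with the standard paths as centres—which forces $\gamma$ to pass through every vertex $Z$ with $d_Z(U,V)>\hat K$, in particular through $W$; this contradicts $W\notin\gamma$, and the bound on $\mathrm{diam}(\pi_W(\gamma))$ then follows from Axiom (\ref{axiom1}) and the edge estimates above. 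I expect this comparison to be the main obstacle: ruling out that a geodesic ``routes around'' a high-projection vertex requires careful management of the nested constants $\kappa\ll L\ll K\ll\hat K$ and the full strength of the projection axioms, whereas everything else is formal manipulation of the triangle inequality for the $d_{(\cdot)}$'s.
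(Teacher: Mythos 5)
The paper gives no proof of this lemma; it is quoted directly from \cite[Lemma 3.18]{BBF}, so I am comparing your reconstruction with the argument there. Your overall reduction is the right one: it suffices to show that any geodesic $\gamma=(W_0=U,\dots,W_n=V)$ in $\PC$ omitting a vertex $W$ has $d_W(U,V)\le M$ for a uniform $M=M(\kappa,K)$, and then to set $\hat K:=\max\{K,M\}+1$. Your edge estimate is also correct: adjacency of $W_t,W_{t+1}$ means $\f_K(W_t,W_{t+1})=\emptyset$, so $d_W(W_t,W_{t+1})\le K$ whenever $W\notin\{W_t,W_{t+1}\}$.

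The gap is that the key estimate --- why these increments of size $\le K$ cannot accumulate along a long geodesic --- is never actually established. You route it through two claims: that the standard path through the totally ordered set $\f_L(U,V)$ is a geodesic (BBF's Proposition 3.7 only shows it is a \emph{path}; geodesicity of standard paths is a feature of the later modified construction in \cite{BBFS}, not of \cite{BBF}), and that one can ``propagate the Behrstock order along $\gamma$'' to force $\gamma$ through $W$, which is essentially a restatement of the conclusion and which you yourself flag as the unresolved obstacle. The missing ingredient is the statement the paper records as Lemma \ref{lem: BGIT} (= BBF Proposition 3.14): a path all of whose vertices lie at distance $\ge 3$ from $W$ has $W$-projection of uniformly bounded diameter $K_0$. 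With it, the argument closes using geodesicity in exactly one place: any two vertices of $\gamma$ within distance $2$ of $W$ are at distance $\le 4$ from each other, hence at most $4$ apart \emph{along} $\gamma$; so if $W_i$ and $W_j$ are the first and last such vertices, the outer segments $W_0,\dots,W_{i-1}$ and $W_{j+1},\dots,W_n$ stay at distance $\ge 3$ from $W$ and each contributes at most $K_0$ by Lemma \ref{lem: BGIT}, while the middle segment contributes at most $6K$ by your edge estimate. The triangle inequality for $d_W$ then gives $d_W(U,V)\le 2K_0+6K=:M$. Your sketch never exploits that $\gamma$ can linger near $W$ for only boundedly many steps, which is precisely why the ``routing around'' problem remains open in your write-up.
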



\begin{lem}\cite[Proposition 3.14]{BBF}\label{lem: BGIT}
There exists $K_0$ with the following property.
Assume that $\{U_0,U_1,\cdots,U_k\}$ is a  path in $\PC$ and $V$ is a vertex with $d(U_i,V)\ge 3$ for all $i$. Then  $d_V(U_0,U_i)\le K_0$ for any $1\le i\le k$.   
\end{lem}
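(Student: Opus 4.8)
The plan is to argue by contradiction, choosing $K_0=K_0(\kappa,K)$ so that $d_V(U_0,U_i)>K_0$ would force some vertex $U_j$ of the path to lie within distance $2$ of $V$ in $\PC$, contradicting the standing hypothesis $d(U_j,V)\ge 3$. For the setup: since $d(U_j,V)\ge 3>0$ we have $V\ne U_j$ for all $j$, so all projections $\pi_V(U_j)$ and all quantities $d_V(U_j,U_{j'})=\mathrm{diam}\big(\pi_V(U_j)\cup\pi_V(U_{j'})\big)$ are defined; and since consecutive vertices of a path in $\PC$ are adjacent, $\f_K(U_j,U_{j+1})=\emptyset$, hence $d_V(U_j,U_{j+1})\le K$ for every $j$. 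Telescoping with the triangle inequality $d_V(A,C)\le d_V(A,B)+d_V(B,C)$ only gives $d_V(U_0,U_i)\le iK$; upgrading this non-uniform estimate to a constant is exactly what the hypothesis $d(U_j,V)\ge 3$ is for.

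The conceptual engine is the Forcing Lemma. Let $\hat K>K$ be as in Lemma~\ref{ForcingLem} and put $K_0:=\hat K$. If $d_V(U_0,U_i)>\hat K$, then $V\in\f_{\hat K}(U_0,U_i)$, so by Lemma~\ref{ForcingLem} the vertex $V$ is a guard between $U_0$ and $U_i$: it lies on every $\PC$-geodesic from $U_0$ to $U_i$. Since $\PC$ is a quasi-tree \cite[Theorem 3.16]{BBF}, it enjoys a bottleneck property---there is $\Delta=\Delta(\kappa,K)$ such that every path between two vertices passes within $\Delta$ of each vertex of a geodesic joining them---and applying this to the subpath $U_0,\dots,U_i$ and to the geodesic vertex $V$ yields some $j$ with $d(U_j,V)\le\Delta$. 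If one is content with \emph{some} uniform bound this already suffices, at the cost of replacing the constant $3$ by $\Delta+1$.

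To obtain the sharp constant $3$ as stated, one must bypass the abstract quasi-tree estimate and argue directly from the projection axioms of Definition~\ref{defn:projaxioms}; this is the main technical point. Assuming $d(U_j,V)\ge 3$ for all $j$, the second axiom shows that once $d_V(U_0,U_j)>\kappa$ one has $d_{U_j}(V,U_0)\le\kappa$, so the ``$V$-direction'' is frozen from $U_j$ onward along the path; coupling this with the step bounds $d_V(U_j,U_{j+1})\le K$ forces $j\mapsto d_V(U_0,U_j)$ to be coarsely monotone as long as the path stays at distance $\ge 3$ from $V$, while the third axiom (finiteness of $\{W:d_W(U_0,U_i)>\kappa\}$) constrains how this can happen, in tandem with the way guards are distributed along geodesics (Lemma~\ref{ForcingLem}). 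Reconciling the two descriptions shows that a walk with $d_V(U_0,U_i)$ large cannot keep distance $\ge 3$ from $V$, and tracking the constants produces the explicit $K_0$. The crux---and where the hypothesis $d(U_j,V)\ge 3$ rather than merely $\ge 2$ is used---is controlling the short, bounded ``transient'' parts of $\PC$-geodesics near their endpoints, which is also the delicate ingredient in \cite[Proposition 3.14]{BBF}.
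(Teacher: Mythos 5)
The paper offers no proof of this lemma---it is quoted verbatim from \cite[Proposition 3.14]{BBF}---so your attempt can only be judged on its own merits. Your set-up is fine ($V\neq U_j$ makes the projections well defined, and adjacency gives $d_V(U_j,U_{j+1})\le K$), and your first route, Forcing Lemma plus the bottleneck property of a quasi-tree, does correctly prove a \emph{weakened} statement in which the hypothesis $d(U_i,V)\ge 3$ is replaced by $d(U_i,V)\ge \Delta+1$ for an unspecified quasi-tree constant $\Delta$. That weaker version would in fact suffice for every application in this paper, since the lemma is only ever invoked for paths whose distance to $V$ is tending to infinity. But as a proof of the cited result the route is circular: in \cite{BBF}, Proposition 3.14 is an ingredient in establishing both the quasi-tree property (Theorem 3.16, whence the bottleneck estimate) and the Forcing Lemma (Lemma 3.18), so neither may be assumed when proving it.

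The second half, which is the part that would actually deliver the statement as written, is not an argument. The sentences about axiom (2) ``freezing the $V$-direction,'' coarse monotonicity of $j\mapsto d_V(U_0,U_j)$, and ``reconciling the two descriptions'' assert precisely the conclusion to be proved without supplying the mechanism. What is missing is the concrete extremal step: assuming $d_V(U_0,U_i)>K_0$ while $d(U_j,V)\ge 3$ for all $j$, one uses the step bound $d_V(U_j,U_{j+1})\le K$ to locate an index $j$ with both $d_V(U_0,U_j)>\kappa$ and $d_V(U_j,U_i)>\kappa$, applies axiom (2) twice to get $d_{U_j}(U_0,V)\le\kappa$ and $d_{U_j}(V,U_i)\le\kappa$, and then must manufacture from this a vertex of the path within distance $2$ of $V$---this last step is the real technical content of \cite[Proposition 3.14]{BBF} and the point where ``$\ge 3$'' rather than ``$\ge 2$'' matters. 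None of it is carried out. Either cite the result as a black box, as the paper does, or write down the axiom-level argument; the present sketch does neither.
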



\subsection{Horofunction boundary of projection complex}\label{subsec: horofunction boundary of projection complex}

This subsection is devoted to the proof of Theorem \ref{MainThm}, which relies on the the following key fact. For simplicity, write $\mathcal X=\p_K(\f)$.  Let $\Pi: \mathcal X\cup\partial_h\mathcal X\to \mathcal X\cup\partial\mathcal X$ be the local minimum map given by Lemma \ref{localminimalmap}.
\begin{prop}\label{prop: minimal loxodromic elements}
There exist infinitely many weakly independent loxodromic isometries in $G$ on $\mathcal X$ with minimal fixed points in the horofunction boundary  $\partial_h \mathcal X$.    
\end{prop}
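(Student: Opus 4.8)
The plan is to produce loxodromic elements whose fixed points in $\partial_h\mathcal X$ have singleton $[\cdot]$-class, using the guardianship criterion of Lemma \ref{lem: minimal class criterion} together with the combinatorics of the projection complex. First I would recall that by Lemma \ref{extend3} applied to (suitably high powers of) the three independent loxodromics in $F$, and by the Extension Lemma, we can build loxodromic elements $g\in G$ whose quasi-axis $\ax(g)$ in $\mathcal X$ is, up to uniform constants, a concatenation of translates of the vertices in $\f$ associated to $F$; in particular $\ax(g)$ passes through a bi-infinite sequence of vertices $\dots, V_{-1}, V_0, V_1, \dots$ of $\mathcal X$ on which $g$ acts by translation. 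The key point is that, by choosing the "gap" parameters large (i.e. raising the exponents $n_i$ in a word like $\prod h_i f_i^{2n_i}$ from Lemma \ref{lem: uncountable Myrberg}, or simply taking $g = (h f^N)$ with $N$ large for weakly independent $h,f\in F$), we can force $d_{V_j}(V_{j-1},V_{j+1})$ to be as large as we like, so that by the Forcing Lemma \ref{ForcingLem} each $V_j$ is a guard between $V_{j-1}$ and $V_{j+1}$ in $\mathcal X$, and hence (by transitivity of guardianship along a geodesic, combined with Lemma \ref{lem: BGIT} to control projections of far-away vertices) each $V_j$ is a guard between $V_i$ and $V_k$ for all $i < j < k$.

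Next I would verify that this guard structure persists to the boundary. Let $g^+ \in \partial_h \mathcal X$ be a horofunction obtained as a limit $b_{V_j} \to b_{g^+}$ (such a limit exists along a subsequence since $\overline{\mathcal X}_h$ is compact; note $\Pi(b_{g^+})$ is the Gromov-boundary fixed point of $g$ by Lemma \ref{localminimalmap} and Definition \ref{defn: bdry projection for loxo}). Suppose $b_\eta \in [b_{g^+}]$ is realized by $y_m \to \eta$ with $\sup_m \|b_{g^+} - b_\eta\|_\infty = K' < \infty$. Using Lemma \ref{lem: shortest projection for bdry points}, the projections $\pi_{\ax(g)}(y_m)$ stabilize (for $m$ large) within bounded distance of $\pi_{\ax(g)}(V_j)$ for $j$ large, i.e. the $y_m$ "escape to the same end" of $\ax(g)$; feeding this into the guardianship of the $V_j$'s, I claim that for every fixed $z \in \mathcal X$, each $V_j$ with $j$ large is a guard between $z$ and $y_m$ for all but finitely many $m$ — the point being that any geodesic $[z, y_m]$ must cross the interval-like set $\f_{\hat K}(z, y_m)$, which contains the far-out $V_j$'s once $y_m$ has projected past them. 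Then Lemma \ref{lem: minimal class criterion} applies with $x_n = V_n$, $\xi = g^+$, $\zeta = \eta$, forcing $\eta = g^+$; thus $[g^+]$ is singleton, i.e. $g^+$ is minimal, and symmetrically so is $g^-$.

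Finally, for the "infinitely many weakly independent" conclusion, I would note that the construction above gives one loxodromic $g$ with minimal fixed points, and then conjugate: by Lemma \ref{limitsetbigtwo}(ii) (density of fixed-point pairs) and the north–south dynamics, there are infinitely many loxodromics $g_k$ pairwise weakly independent and each weakly independent with $g$; running the same argument for each $g_k$ (the construction only used that $g_k$ has a quasi-axis built from $F$-translates with large gaps, which we can arrange for each member of an infinite weakly independent family) yields infinitely many with minimal fixed points. The main obstacle I anticipate is the second step — transferring the purely combinatorial "guard" property of the vertices $V_j$ from finite geodesics in $\mathcal X$ to the statement that $V_j$ guards $z$ from the approximating sequence $y_m$ of an arbitrary $[\cdot]$-equivalent horofunction $b_\eta$; this requires carefully combining the Bestvina–Bromberg–Fujiwara distance-formula estimates (Lemmas \ref{ForcingLem} and \ref{lem: BGIT}) with the horofunction/finite-difference control from Lemma \ref{lem: shortest projection for bdry points}, and getting the order of quantifiers ($\forall z\ \exists n_0\ \forall n \ge n_0$) right.
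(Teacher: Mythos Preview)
Your outline matches the paper's proof closely: construct $g$ of the form $hf^N$ with $N$ large, set $U_i=g^i\ax(f)$ as the axis vertices in $\mathcal X$, force $d_{U_j}(U_i,U_k)\gg\hat K$ for $i<j<k$ so that Lemma~\ref{ForcingLem} makes each $U_j$ a guard, and finish via Lemma~\ref{lem: minimal class criterion}. The paper does exactly this; for the final clause it simply varies $N$ rather than conjugating.

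The one step that would fail as written is your invocation of Lemma~\ref{lem: shortest projection for bdry points} to show that the approximants $y_m$ of $\eta\in[g^+]$ escape to the positive end of $\ax(g)$. That lemma is only usable for $[\xi]\ne[g^\pm]$: the projections of your own $V_j$ onto $\ax(g)$ already diverge, so the set in the lemma's conclusion has infinite diameter and the statement is vacuous in this case. The paper sidesteps this by using Lemma~\ref{localminimalmap}(1) instead: $[\eta]=[g^+]$ forces $\Pi(\eta)=\Pi(g^+)=p\in\partial\mathcal X$, so the sequences $U_i$ and $y_m$ converge to the same Gromov boundary point and $\langle U_i,y_m\rangle_{U_0}\to\infty$. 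Then for any fixed $U_{i_0}$ one has $d(U_{i_0},[U_i,y_m])\to\infty$, whence Lemma~\ref{lem: BGIT} gives $d_{U_{i_0}}(U_i,y_m)\le K_0$; combined with $d_{U_{i_0}}(Z,U_0)\le K_0$ (again from Lemma~\ref{lem: BGIT}, once $i_0$ is large relative to $Z$) and the large lower bound on $d_{U_{i_0}}(U_0,U_i)$, a triangle inequality yields $d_{U_{i_0}}(Z,y_m)>\hat K$, and Lemma~\ref{ForcingLem} then supplies the guardianship you need. So the obstacle you flagged is real, but the fix is to swap in the local minimum map rather than the projection lemma.
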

\begin{proof}
Fix a nontrivial element $g\in G$. By Lemma \ref{extend3}, there exists $f\in F$ so that for any large  $n\gg 0$, $gf^n$ labels a $c$-quasi-geodesic on $X$ for some $c$ depending only on $\f$ (not on $n$). Namely, the path $$\gamma:=\bigcup_{i\in \mathbb Z} gf^n([o,go]\cdot g[o,f^no])$$  is a $c$-quasi-geodesic. Denote $U_i=(gf^n)^i\ax(f)$ with $i\in \mathbb Z$.  Let $\pi\gamma$ denote the concatenated path in the projection complex by connecting vertices $U_i$ and $U_{i+1}$ by a geodesic for each $i\in \mathbb Z$. Choosing equivariantly the geodesics between $U_i$ and $U_{i+1}$, we may assume that $\pi\gamma$ is left invariant under $gf^n$.  

Fix any $i<j<k$. By Morse lemma, the first inequality holds  for a constant $D_0$ depending only on $D_0,\delta$ : $$d_{U_j}(U_i,U_k)\ge d(o,f^no)-D_0  \gg \hat K$$ where the last one holds when $n\gg 1$, so by Lemma \ref{ForcingLem} the interval $\f_K(U_0,U_i)$ is contained in any $\mathcal X$-geodesic  from $U_0$ to $U_i$ for any given $0\ne i\in \mathbb Z$. This shows that $\pi\gamma$  is a  bi-infinite geodesic  in $\mathcal X$. Thus, for all large $n\gg 1$, $gf^n$  is a loxodromic element on $\mathcal X$.

If $\xi$ and $p$ denote the corresponding limit points of $\{U_i\}_i$ in $\partial_h\mathcal X$ and $\partial \mathcal X$ as $i\to+\infty$, then $\Pi(\xi)=p$ by Lemma \ref{localminimalmap}. Note that $p$ is the attractor of  $gf^n$. We next prove that the $\Pi$-preimage of $p$ is a singleton.   The case for the repeller of $gf^n$  is similar.

Indeed, given $\eta\in \Pi^{-1}(p)$, let $\{V_j\}_j$ be a sequence of vertices in $\mathcal X$ so that $V_j\to \eta$. That is, $[\xi]=[\eta]$. Then $\{V_j\}_j$ and $\{U_i\}_i$ converge to the same $p$ in $ \partial \mathcal X$.  Thus, $\langle V_j, U_i\rangle_{U_0}$ tends to $\infty$ as $i, j\to \infty$. Let us fix any  $U_{i_0}$ with $i_0\ge 1$. This implies that  the distance $d(U_{i_0}, [U_i,V_j])$ tends to $\infty$ as    $i, j\to\infty$. By Lemma \ref{lem: BGIT}, $d_{U_{i_0}}(U_i,V_j)\le K_0$ for all large $i, j\gg i_0$.

Fix any vertex $Z$ in the projection complex $\mathcal X$. When $i_0\gg 0$, $d(U_0,U_{i_0})\gg d(U_0,Z)$,  so it follows that $d_{U_{i_0}}(Z,U_0)\le K_0$ by Lemma \ref{lem: BGIT}. Thus, for $j\gg i_0$,  $$d_{U_{i_0}}(Z,V_j)\ge d_{U_{i_0}}(U_0,U_i)-d_{U_{i_0}}(Z,U_0)-d_{U_{i_0}}(U_i,V_j)\ge d(o,f^no)-D_0-2K_0>K$$ By Lemma \ref{ForcingLem}, $U_{i_0}$ is a guard between $Z$ and $V_j$ for all large $j\gg i_0$.  As $Z$ and $U_{i_0}$ are arbitrary, the condition of Lemma \ref{lem: minimal class criterion} is verified, so $\xi=\eta$ follows. Therefore, we proved that the fixed  $[\cdot]$-classes of $gf^n$ for $n\gg 1$ are minimal in $\partial_h \mathcal X$.  Varying $n$ constructs infinitely many weakly independent loxodromic isometries. The proof is then complete. 
\end{proof}

A similar proof shows that the local minimum map $\Pi$ is injective on the preimages of Myrberg limit points in $\partial \mathcal X$. 

\begin{lem}\label{lem: local minimum map injective Myrberg}
Let $p\in \partial \mathcal X$ be a  Myrberg limit point for the action $G\act \partial \mathcal X$.
Then   $\Pi^{-1}(p)$   is a singleton in the horofunction boundary $\partial_h \mathcal X$. 
\end{lem}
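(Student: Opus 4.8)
The plan is to rerun the argument of Proposition~\ref{prop: minimal loxodromic elements}, replacing the canonical loxodromic axis used there by a cofinal sequence of ``large pivots'' pointing at $p$, which the Myrberg property will supply. Write $\mathcal X=\PC$ and let $\Pi\colon\overline{\mathcal X}_h\to\mathcal X\cup\partial\mathcal X$ be the local minimum map. Take any $\xi,\eta\in\Pi^{-1}(p)\subseteq\partial_h^\infty\mathcal X$; by the definition of $\Pi$ on $\partial_h^\infty\mathcal X$, any sequences of vertices $U_i\to\xi$ and $V_j\to\eta$ both converge to $p$ in $\partial\mathcal X$. By Lemma~\ref{lem: minimal class criterion} it suffices to produce \emph{one} sequence $(U_i)$ of vertices of $\mathcal X$ converging to infinity, with Gromov limit $p$ and with the following pivot property: there is $M>\hat K+4K_0$ so that, writing $U_0$ for its first term, $d_{U_j}(U_0,U_i)\ge M-2K_0$ for every fixed $j\ge1$ and all large $i$. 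Such a sequence defines a horofunction $\xi_0\in\partial_h^\infty\mathcal X$ with $\Pi(\xi_0)=p$, and — as explained below — the pivot property forces $\xi_0=\eta$ for every $\eta\in\Pi^{-1}(p)$, hence $\Pi^{-1}(p)=\{\xi_0\}$.

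To build $(U_i)$, first fix one of the loxodromic elements $h=gf^{n_0}$ from Proposition~\ref{prop: minimal loxodromic elements}, choosing $n_0$ large enough that the pivots $h^i\ax(f)$ on its $\mathcal X$-axis $A$ satisfy $d_{h^j\ax(f)}(h^i\ax(f),h^k\ax(f))\ge M$ for $i<j<k$, with $M:=d(o,f^{n_0}o)-D_0$ as large as we like. Since $p$ is a Myrberg limit point for $G\act\mathcal X$, Lemma~\ref{lem: Myrberg chracterization}, applied to a quasi-geodesic ray $\alpha$ from a vertex $Z_0$ to $p$ in $\mathcal X$ and to the loxodromic $h$, gives elements $\rho_m\in G$ with $\mathrm{diam}\bigl(\alpha\cap N_R(\rho_m A)\bigr)\to\infty$. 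By the Morse lemma together with the bounded geodesic image Lemma~\ref{lem: BGIT}, deep inside such a fellow-travelling segment the pivots of $\rho_m A$ lie uniformly close to $\alpha$ and their consecutive $\pi$-projections still differ by at least $M-O(\kappa)$. Since the segments have length tending to infinity, for every $l$ one can select a pivot $U_l$ of $\alpha$ with $d_{\mathcal X}(Z_0,U_l)\ge l$ lying, together with its two $\rho_m A$-neighbours, deep in one such segment; putting $U_0:=Z_0$, this $(U_i)$ is cofinal along $\alpha$. Then $d_{\mathcal X}(Z_0,U_i)\to\infty$ and $U_i\to p$, and for fixed $j$ and large $i$ the two $\rho_m A$-pivots flanking $U_j$ have $\pi_{U_j}$-projections within $K_0$ of $\pi_{U_j}(U_0)$ and of $\pi_{U_j}(U_i)$ respectively (Lemma~\ref{lem: BGIT}), whence $d_{U_j}(U_0,U_i)\ge M-2K_0$, as wanted.

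With $(U_i)$ in hand, the uniqueness step is verbatim Proposition~\ref{prop: minimal loxodromic elements}. Given $\eta\in\Pi^{-1}(p)$ pick vertices $V_j\to\eta$; then $V_j\to p$ in $\partial\mathcal X$, so $\langle U_i,V_j\rangle_{U_0}\to\infty$, hence $d\bigl(U_{i_0},[U_i,V_j]\bigr)\to\infty$ for fixed $i_0$, and Lemma~\ref{lem: BGIT} gives $d_{U_{i_0}}(U_i,V_j)\le K_0$ for all large $i,j$. For a fixed vertex $Z$ and $i_0$ large enough that $U_{i_0}$ is far from the finite segment $[Z,U_0]$, Lemma~\ref{lem: BGIT} also gives $d_{U_{i_0}}(Z,U_0)\le K_0$, and the triangle inequality for $d_{U_{i_0}}$ yields
\[
d_{U_{i_0}}(Z,V_j)\ \ge\ d_{U_{i_0}}(U_0,U_i)-d_{U_{i_0}}(Z,U_0)-d_{U_{i_0}}(U_i,V_j)\ \ge\ (M-2K_0)-K_0-K_0\ >\ \hat K
\]
for all large $i,j$, so by Lemma~\ref{ForcingLem} $U_{i_0}$ is a guard between $Z$ and $V_j$ for all large $j$. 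As $Z$ and all large $i_0$ are arbitrary, Lemma~\ref{lem: minimal class criterion} gives $\xi_0=\eta$, hence $\Pi^{-1}(p)=\{\xi_0\}$.

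The hard part will be the construction in the second paragraph: extracting from the Myrberg fellow-travelling with the translated axes $\rho_m A$ a single cofinal sequence of pivots of $\alpha$ all carrying the uniform projection gap $\ge M-O(\kappa)$. This rests on the stability of Bestvina–Bromberg–Fujiwara pivots under bounded fellow-travelling — that the projection onto a pivot of $\rho_m A$ lying near $\alpha$ of a point far along $\alpha$ is, up to $O(\kappa)$, computed by travelling along $\alpha$ — together with bookkeeping about which pivots of $\alpha$ sit deep inside which fellow-travelling segment; both use only the Morse lemma and Lemma~\ref{lem: BGIT}, but this is where the genuine work lies. Everything else is a transcription of the corresponding part of Proposition~\ref{prop: minimal loxodromic elements}.
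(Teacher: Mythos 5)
Your proposal is correct and follows essentially the same route as the paper: use Lemma~\ref{lem: Myrberg chracterization} to find segments of a ray to $p$ fellow-travelling translates of the axis of the loxodromic element from Proposition~\ref{prop: minimal loxodromic elements}, extract from them a cofinal sequence of large-projection pivots serving as guards via Lemmas~\ref{lem: BGIT} and~\ref{ForcingLem}, and conclude with Lemma~\ref{lem: minimal class criterion}. The only difference is presentational: the paper compresses your second paragraph into ``arguing similarly \dots one may verify,'' whereas you make the pivot-selection bookkeeping explicit.
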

\begin{proof}
Fix a base vertex $o\in \mathcal X$.  In the proof of Proposition \ref{prop: minimal loxodromic elements},  for any large $\tilde K$, we constructed   a loxodromic isometry $h\in G$ (eg. $h=gf^n$ for $n\gg 0$) so that $d_U(o,ho)>\tilde K$ for some vertex $U$ on $[o, ho]$. 

Now, let $\gamma$ be a quasi-geodesic ray in $\mathcal X$  starting at $o$ and ending at  $p\in \Lambda G$. By Lemma \ref{lem: Myrberg chracterization},  $\gamma$ contains an infinite sequence of subpaths $p_n$ so that $\mathrm{Len}(p_n)\to \infty$ and $p_n$ lies in the $R$-neighborhood of $g_n\ax(h)$ with $g_n\in G$. Arguing similarly  in the proof of Proposition \ref{prop: minimal loxodromic elements},  one may verify by Lemma \ref{lem: BGIT} that $g_n U$ is a guard between $o$ and any vertex $z$ on $\gamma$ with $d(o,z)\gg d(o, g_nU)$. So by Lemma \ref{lem: minimal class criterion}, we see that $\gamma$ accumulates to the same point $\xi$ in $\partial_h \mathcal X$, and $[\xi]$ is a singleton. 
\end{proof}

Let $\mathcal L$ denote  all the loxodromic isometries $g\in G$ which have minimal fixed points $g^-,g^+$ in $\overline{\mathcal X}_h$. It is non-empty by Proposition \ref{prop: minimal loxodromic elements}. We now define the desired extreme boundary in Theorem \ref{MainThm} to be  the topological closure in $\overline{\mathcal X}_h$ :  
$$\partial G:=\overline{\{g^-,g^+: g\in \mathcal L\}}$$   of the fixed points of elements in $\mathcal L$. Then $\partial G$ is a compact subset, as $\overline{\mathcal X}_h$  is so. 
\begin{lem}\label{lem: minimal loxodromic with ns dynamics}
Any loxodromic isometry $g\in \mathcal L$ admits north-south dynamics relative to $g^\pm$ on $\overline{\mathcal X}_h$. In particular, $\partial G$ is the minimal $G$-invariant closed subset.  
\end{lem}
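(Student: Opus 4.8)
The plan is to transport the north–south dynamics that $g$ enjoys on the Gromov boundary $\partial\mathcal X$ (Lemma \ref{loxo North-South dynamics}) up to the horofunction compactification $\overline{\mathcal X}_h$ via the local minimum map $\Pi$. Since $g\in\mathcal L$, the fixed points $g^-,g^+$ are \emph{minimal}, i.e. $[g^-]$ and $[g^+]$ are singletons; equivalently $\Pi^{-1}(p^\pm)=\{g^\pm\}$ where $p^\pm\in\partial\mathcal X$ are the Gromov fixed points of $g$. First I would fix disjoint open neighborhoods $U\ni g^-$ and $V\ni g^+$ in $\overline{\mathcal X}_h$ and seek $n_0$ with $g^n(\overline{\mathcal X}_h\setminus U)\subseteq V$ and $g^{-n}(\overline{\mathcal X}_h\setminus V)\subseteq U$ for $n\ge n_0$. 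The key point is a \emph{convergence} statement: if $\xi_k\to\xi$ in $\overline{\mathcal X}_h$ with $\xi\ne g^-$, then $g^n\xi_k$ eventually enters $V$; by compactness this reduces to showing $g^n\xi\to g^+$ for every fixed $\xi\ne g^-$, together with a diagonal/uniformity argument.

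The main technical step is therefore: for any horofunction $\xi\in\overline{\mathcal X}_h$ with $\xi\notin[g^-]$ (so in particular $\xi\ne g^-$, and noting $[g^-]=\{g^-\}$), we have $g^n\xi\to g^+$ in $\overline{\mathcal X}_h$ as $n\to+\infty$. I would split according to whether $\xi\in\partial_h^\infty\mathcal X$ or not. If $\xi\in\partial_h^\infty\mathcal X$, then $\Pi(\xi)\in\partial\mathcal X\setminus\{p^-\}$, so by Lemma \ref{loxo North-South dynamics} $g^n\Pi(\xi)=\Pi(g^n\xi)\to p^+$ in $\partial\mathcal X$; since $\Pi$ restricted to $\partial_h^\infty\mathcal X$ is continuous and proper onto $\partial\mathcal X$ (Lemma \ref{localminimalmap}(1)), and $\Pi^{-1}(p^+)=\{g^+\}$, any subsequential limit of $g^n\xi$ in the compact space $\overline{\mathcal X}_h$ must lie in $\Pi^{-1}(p^+)=\{g^+\}$ — here one uses that $\Pi$ is continuous at points of $\partial_h^\infty\mathcal X$ to pass $\Pi$ through the limit, and that any limit of points of $\partial_h^\infty\mathcal X$ mapping to $p^+$ is again in the $K$-class $[g^+]=\{g^+\}$ by Lemma \ref{bddLocusLem}-type reasoning. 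If instead $\xi\in\overline{\mathcal X}_h\setminus\partial_h^\infty\mathcal X$, then $\Pi(\xi)$ is a bounded subset of $\mathcal X$, and I would use the projection $\pi_{\ax(g)}$ from Definition \ref{defn: bdry projection for loxo}: write $\xi$ as a limit of vertices $z_m\in\mathcal X$ with $\sup_m d_{\mathcal X}(z_m,\Pi(\xi))$ bounded, and track $g^n z_m$. Because $\pi_{\ax(g)}(\xi)$ is a well-defined bounded set and $\ax(g)$ is quasi-convex with $g$ translating along it, the vertices $g^n z_m$ are guarded by vertices of $\ax(g)$ marching toward $p^+$; applying Lemma \ref{lem: minimal class criterion} (with guard vertices taken on $\ax(g)$ far out toward $p^+$) shows $g^n\xi$ converges to the unique point in $[g^+]$, namely $g^+$.

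Once the pointwise statement is in hand, the uniformity needed for the open-set formulation of north–south dynamics follows by the standard compactness argument: if $g^{n_k}\xi_k\notin V$ for some $n_k\to\infty$ and $\xi_k\to\xi\in\overline{\mathcal X}_h\setminus U$, then $\xi\ne g^-$, and a diagonal extraction together with the pointwise convergence $g^n\xi\to g^+$ and continuity of each homeomorphism $g^n$ forces a contradiction; the symmetric statement for $g^{-n}$ and $V$ is identical with $g^+\leftrightarrow g^-$. This establishes north–south dynamics of $g$ on $\overline{\mathcal X}_h$ relative to $g^\pm$. For the ``in particular'' clause: $\partial G=\overline{\{g^-,g^+:g\in\mathcal L\}}$ is $G$-invariant and closed by construction (conjugation sends $\mathcal L$ to itself). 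To see it is the \emph{unique} minimal $G$-invariant closed subset, I would argue as in the proof of Lemma \ref{limitsetbigtwo}(i): any nonempty $G$-invariant closed $A\subseteq\overline{\mathcal X}_h$ meeting $\partial G$ must, by $G$-minimality on the limit set and the north–south dynamics just proved, contain $g^+$ for some $g\in\mathcal L$ (push any point $\xi\in A\setminus\{g^-\}$ toward $g^+$ by $g^n$), hence contains the $G$-orbit of $g^+$, hence contains $\partial G$; conversely $\partial G$ itself is minimal since from any of its points one can reach, via some $g\in\mathcal L$ and its north–south dynamics, a dense subset of the fixed-point set. The main obstacle I anticipate is the case $\xi\notin\partial_h^\infty\mathcal X$, where $\Pi$ is only coarsely defined and not continuous, so the convergence $g^n\xi\to g^+$ must be extracted by hand from the guard/projection machinery of \textsection\ref{sec: finite difference relation} rather than by merely invoking continuity of $\Pi$; making Lemma \ref{lem: minimal class criterion} applicable here — i.e. producing, for each vertex $z$, guard vertices on $\ax(g)$ that separate $z$ from $g^n z_m$ for all large $n,m$ — is the crux of the argument.
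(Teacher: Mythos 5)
Your pointwise analysis ($g^n\xi\to g^+$ for each fixed $\xi\ne g^-$) is plausible, but the step you dismiss as a ``standard compactness argument'' --- upgrading pointwise convergence to the uniform statement $g^n(\overline{\mathcal X}_h\setminus U)\subset V$ --- is precisely the content of the lemma, and your sketch of it does not work. Knowing $\xi_k\to\xi$ and $g^n\xi\to g^+$ does not control the diagonal sequence $g^{n_k}\xi_k$: continuity of each individual homeomorphism $g^{n}$ is useless here because the family $\{g^n\}$ is not equicontinuous (it cannot be, since these maps compress the complement of a shrinking neighborhood of $g^-$ into a shrinking neighborhood of $g^+$). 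No contradiction is ``forced'' by diagonal extraction alone; some uniform geometric input over the whole compact set $W=\overline{\mathcal X}_h\setminus V$ is required, and you never produce one. A secondary soft spot: in the case $\xi\in\partial_h^\infty\mathcal X$ you invoke properness of $\Pi$ and pass limits through $\Pi$, but Lemma \ref{localminimalmap} gives continuity only on $\partial_h^\infty\mathcal X$, which is not closed in $\overline{\mathcal X}_h$; a subsequential limit of $g^{n}\xi$ could a priori land in $\overline{\mathcal X}_h\setminus\partial_h^\infty\mathcal X$, so identifying it with $g^+$ needs an argument you do not supply.

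The paper's proof avoids both problems by proving the uniform statement directly at the level of projections to the axis: using the extended projection $\pi_{\ax(g)}$ of Definition \ref{defn: bdry projection for loxo} and its coarse continuity (Corollary \ref{lem: projection continuity}, resting on Lemma \ref{lem: shortest projection for bdry points}), one shows that $\pi_{\ax(g)}(W)$ is contained in a single positive half-ray of $\ax(g)$ --- if not, a sequence $w_n\in W$ with projections escaping in the negative direction would accumulate (after replacing $w_n$ by nearby points of $\mathcal X$) at a horofunction with finite difference from $g^-$, hence at $g^-$ itself by minimality, contradicting $g^-\notin W$. Since $g$ translates $\ax(g)$, the projection of $g^nW$ then leaves every negative half-ray, forcing $g^nW\subset U$. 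This is the uniform control your argument is missing; I recommend reorganizing your proof around such a projection claim rather than around pointwise dynamics of $\Pi$.
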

\begin{proof}
The outline follows the one of \cite[Lemma 3.27]{YANG22} which proves the same conclusion when $X$ is a proper space. In the non-proper case, we need take of the horofunctions in $ \partial_h \mathcal X \setminus \partial_h^\infty\mathcal X$. To deal with them we employ the local minimum map in Lemma \ref{localminimalmap}.

The quasi-axis  $\ax(g)=\bigcup_{i\in \mathbb Z} g^i[o,go]$ is a bi-infinite quasi-geodesic in $\mathcal X$. A \textit{positive half-ray} of $\ax(g)$ refers to a half-ray $\bigcup_{i\ge i_0} g^i[o,go]$ for some $i_0$. The complement of  a positive half-ray is a \textit{negative half-ray}. By assumption of $g\in \mathcal L$, the fixed points $g^+, g^-$ are minimal in  $\partial_h^\infty \mathcal X$.  Thus, any  positive and negative   half-rays  tend to $g^-$ and $g^+$ respectively with respect to the horofunction topology.   

Assuming north-south dynamics, the ``in particular" part could be proved by the same argument as in Lemma \ref{limitsetbigtwo}(i). It is sufficient to prove that $g$ admits north-south dynamics relative to $g^\pm$. To that end, take any open neighborhoods  $\mathcal V$ and $\mathcal U$ of  $g^-, g^+\in \overline{\mathcal X}_h$ respectively. Then $\mathcal W=\overline{\mathcal X}_h\setminus \mathcal V$ is a compact subset.  
\begin{claim}
The projection $\pi_{\ax(g)}(\mathcal W)$ is contained in some positive half-ray of $\ax(g)$. Similarly,  the projection $\pi_{\ax(g)}(\overline{\mathcal X}_h\setminus \mathcal U)$  is contained in some negative half-ray of $\ax(g)$. 
\end{claim}
\begin{proof}[Proof of the claim]
We only prove the first statement; the ``similar" statement is completely analogous. 
Let us assume to the contrary that there exists a sequence of points $w_n\in\mathcal W$ (possibly in $\partial_h\mathcal X$) so that their projections $z_n\in \pi_{\ax(g)}(w_n)$ leave every positive half ray of $\ax(g)$. If $w_n\in \partial_h\mathcal X$ is a boundary point, one may take a sequence of points in $\mathcal X$  tending to $w_n$, so their projections to $\ax(g)$ are uniformly close to $\pi_{\ax(g)}(w_n)$ by Corollary \ref{lem: projection continuity}. Consequently,  we may  assume without loss of generality that $w_n$ are contained in $\mathcal X$.  

By $\delta$-hyperbolicity, the shortest projection point of $w_n$ to $\ax(g)$ is uniformly close to $[o,w_n]$. Thus $[o,w_n]$ intersects the $R$-neighborhood of $z_n$ for some $R$ depending on $\delta$. In particular, $|d(o,w_n) - d(o,z_n)-d(z_n,w_n)|\le 2R$. Given any $x\in \mathcal X$,  the shortest projection of $x$ to $\ax(g)$ is fixed and as $d(o,z_n)\to\infty$, we deduce from hyperbolicity that  $[x,w_n]$ intersects the $R$-neighborhood of $z_n$ for a possibly larger but still denoted by $R$. Thus, $|d(x,w_n) - d(x,z_n)-d(z_n,w_n)|\le 2R$. These   together  show  $|b_{w_n}(x)-b_{z_n}(x)|\le 4R$ for  $x\in \mathcal X$. 

Since $\mathcal W$ is compact, some subsequence of $b_{w_n}$ tends to a horofunction $b_\xi$ for some $\xi \in \mathcal W$.  Note that $z_n\in \ax(g)$ accumulates into $[g^^-]$, so the minimality of $g^-$ implies $b_{z_n}\to b_{g^-}$ and thus $\xi=g^-$. This contradicts the assumption that $g^-\in \mathcal V$.    
\end{proof}

Note that $g$ acts by translation on $\ax(g)$, so  for any given positive half-ray,  its image under a large power $g^n$ is disjoint with a given negative half-ray. Thus, $g^n\mathcal W$ must be contained in $\mathcal U$ by the Claim. This shows that the north-south dynamics of $g$ on $\overline{\mathcal X}_h$.
\end{proof}

We are now ready to complete the proof of Theorem \ref{MainThm} restated below.
\begin{thm}\label{thm:ahextremebdry}
Let $G$ be a non-elementary acylindrically hyperbolic group. Then $G$ admits a minimal and extreme boundary action on a compact metrizable space $\partial G$. Further, if $G$ has trivial finite kernel, then the action  is topologically free.  
\end{thm}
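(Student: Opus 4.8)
The strategy is to assemble the pieces already developed: by Proposition~\ref{prop: minimal loxodromic elements} the set $\mathcal L$ of loxodromic isometries with minimal fixed points in $\overline{\mathcal X}_h$ is nonempty (indeed contains infinitely many weakly independent elements), and by Lemma~\ref{lem: minimal loxodromic with ns dynamics} the closure $\partial G$ of the set of fixed points $\{g^-,g^+:g\in\mathcal L\}$ is the unique minimal $G$-invariant closed subset of $\overline{\mathcal X}_h$, with each $g\in\mathcal L$ acting with north--south dynamics on $\overline{\mathcal X}_h$. First I would record that $\partial G$ is compact metrizable: it is a closed subset of $\overline{\mathcal X}_h$, which is compact, and metrizable by \cite[Proposition 3.1]{MT} since $\mathcal X$ is a countable graph, hence separable. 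Minimality of the action $G\act\partial G$ is exactly the ``in particular" clause of Lemma~\ref{lem: minimal loxodromic with ns dynamics}.

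\textbf{Extreme proximality.} Next I would deduce extreme proximality from the north--south dynamics. The point is the standard implication: if a minimal compact $G$-space $Z$ carries an element with north--south dynamics (relative to a pair of fixed points lying in $Z$), then $Z$ is extremely proximal. Concretely, fix $g\in\mathcal L$ with attractor $g^+$ and repeller $g^-$ in $\partial G$; let $F\subsetneq\partial G$ be compact and $O\subseteq\partial G$ nonempty open. By minimality there is $h\in G$ with $hg^+\in O$, equivalently $g^+\in h^{-1}O$; so replacing $g$ by $hgh^{-1}$ we may assume $g^+\in O$. Since $F\ne\partial G$, pick $p\in\partial G\setminus F$; again by minimality some $k\in G$ has $kp=g^-$, so $g^-\notin kF$, i.e.\ $kF$ is a compact subset of $\partial G$ avoiding the repeller $g^-$. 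Restricting the north--south dynamics of Lemma~\ref{lem: minimal loxodromic with ns dynamics} to the invariant closed subset $\partial G$: there is an open neighborhood $V$ of $g^-$ with $kF\cap V=\emptyset$ and $n_0$ so that $g^n(\partial G\setminus V)\subseteq O$ for all $n\ge n_0$. Then $g^nk\cdot F=g^n(kF)\subseteq g^n(\partial G\setminus V)\subseteq O$ for $n\ge n_0$, exhibiting an element of $G$ carrying $F$ into $O$. Hence $\partial G$ is an extreme boundary.

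\textbf{Topological freeness when $G$ has trivial finite kernel.} Here ``trivial finite kernel" means $G$ has no nontrivial finite normal subgroup. I would show the set of free points (trivial stabilizer) is dense in $\partial G$. By Lemma~\ref{lem: uncountable Myrberg} the Myrberg limit set in $\partial\mathcal X$ is uncountable; by Lemma~\ref{lem: local minimum map injective Myrberg}, for each Myrberg limit point $p\in\partial\mathcal X$ the preimage $\Pi^{-1}(p)$ is a single minimal horofunction $\xi_p\in\partial_h^\infty\mathcal X$, and these $\xi_p$ lie in $\partial G$ (they are approximated by fixed points: a Myrberg point is a limit of attractors of loxodromics in $\mathcal L$, or directly one checks the $\xi_p$'s are in the unique minimal set). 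The uncountable, $G$-invariant set $\{\xi_p\}$ is then dense in $\partial G$ by minimality of the $G$-action on any of its orbit closures and the fact that $\partial G$ is the unique minimal closed invariant subset. It remains to see each $\xi_p$ has trivial stabilizer: if $s\in G$ fixes $\xi_p$, then $s$ fixes $p=\Pi(\xi_p)\in\partial\mathcal X$ by $G$-equivariance of $\Pi$; but the only elements fixing a conical/Myrberg point of a non-elementary (acylindrical) action are loxodromics with that point as an endpoint, or finite-order elements — and a loxodromic cannot fix a Myrberg point of the ambient action (a Myrberg geodesic fellow-travels every axis arbitrarily far, forcing unbounded projection, which is impossible for a loxodromic's own axis near its endpoint), so $s$ has finite order; since this holds for a dense uncountable invariant set, a diagonal/normal-closure argument together with acylindricity shows the pointwise stabilizer of a generic point is a finite normal subgroup of $G$, hence trivial by hypothesis. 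Therefore the free points are dense and the action is topologically free.

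\textbf{Main obstacle.} The delicate step is the topological freeness argument: one must rule out that infinitely many (or even one nontrivial) group element fixes a generic point of $\partial G$, and relate such stabilizers to a finite normal subgroup. The cleanest route is to show the stabilizer of any Myrberg-type point $\xi_p$ is contained in the finite normal ``kernel" $K(G)$ of the acylindrical action (the maximal finite normal subgroup, which is trivial under our hypothesis); this uses that a Myrberg limit point cannot be an endpoint of any loxodromic axis, plus acylindricity to bound the order of elliptic stabilizers, and finally the standard fact that the intersection of stabilizers over a dense orbit is normal and finite. I expect this identification of generic-point stabilizers with $K(G)$ — and the verification that the $\xi_p$ genuinely lie in $\partial G$ rather than merely in $\partial_h\mathcal X$ — to be where the real work lies; everything else is formal.
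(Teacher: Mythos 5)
Your treatment of minimality, compact metrizability, and extreme proximality matches the paper: the paper likewise defines $\partial G$ as the closure of the fixed points of the loxodromics produced by Proposition~\ref{prop: minimal loxodromic elements} and derives extreme proximality from the north--south dynamics of Lemma~\ref{lem: minimal loxodromic with ns dynamics} exactly as you do. For topological freeness your strategy is also the paper's (Myrberg points, injectivity of the local minimum map on them, and a density-of-free-points argument), but the final and only genuinely delicate step is not correctly executed.

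The gap is the passage from ``every nontrivial $s$ fixing $\xi_p$ has finite order'' to ``$s=1$''. Your proposed bridge --- that ``the intersection of stabilizers over a dense orbit is normal and finite'' and hence the ``pointwise stabilizer of a generic point is a finite normal subgroup'' --- does not work: the stabilizer of a single point is not normal, and the normal core $\bigcap_{g\in G}\mathrm{Stab}(g\xi_p)$ is only the kernel of the action on the orbit closure. Its triviality rules out a nontrivial kernel but says nothing about individual point stabilizers, which is what topological freeness requires. The paper closes this gap with a different mechanism: since $G$ has no nontrivial finite normal subgroup, by \cite[Theorem 6.14]{DGO} one may choose the loxodromic $f$ so that its maximal elementary subgroup satisfies $E(f)=\langle f\rangle$, and the translates $\{g\ax(f)\}$ have bounded intersection. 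An elliptic $s$ fixing the Myrberg point coarsely preserves a quasi-geodesic ray to it; by Lemma~\ref{lem: Myrberg chracterization} that ray penetrates translates $g_n\ax(f)$ to arbitrary depth, so $s$ must preserve some $g_n\ax(f)$ and hence lie in $g_nE(f)g_n^{-1}=g_n\langle f\rangle g_n^{-1}$, all of whose nontrivial elements are loxodromic --- a contradiction. (The paper also avoids your unproved claim that no loxodromic fixes a Myrberg point by simply choosing, via uncountability of Myrberg points versus countability of loxodromic fixed points, a Myrberg point fixed by no loxodromic.) Your side remark that one must verify $\xi_p\in\partial G$ is a fair point; the paper asserts this without detail, and it follows from minimality of the class $[\xi_p]$ together with the fact that $\partial G$ is the unique minimal closed invariant set.
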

\begin{proof}
The extreme proximal action  follows from the north-south dynamics in Lemma \ref{lem: minimal loxodromic with ns dynamics}. It remains to show the topological free action on $\partial G$. Note that this does not follow   from Abbot-Dahmani's result \cite[Proposition 0.3]{AD19} which only proves topological free action on the Gromov boundary $\partial \mathcal X$. 

By Lemma \ref{lem: local minimum map injective Myrberg}, the set of Myrberg points in $\Lambda G$ are injective into $\partial G$ under the inverse $\Pi^{-1}$ of the local minimum map. The topological freeness on $\partial G$ would follow, if we could prove that there are Myrberg points that are free points on $\Lambda G$. Indeed, this is what was proved for CAT(0) spaces with rank-one elements \cite[Theorem 3.24]{MWY25B}. Its proof generalizes in a straightforward way. We sketch it for completeness. 

If $G$ has trivial finite kernel, there exists  a loxodromic isometry  $f$  in $\partial_h \mathcal X$ so that $E(f)=\langle f\rangle$  by \cite[Theorem 6.14]{DGO}. Moreover, the set $\{g\ax(f):g\in G\}$  has bounded intersection. As $G$ is countable, the fixed points of loxodromic isometries  are countable. By Lemma \ref{lem: uncountable Myrberg}, there exists a Myrberg point $z\in \Lambda G$ that is not fixed by any loxodromic isometries (there are no parabolic isometries in acylindrical actions). We claim that $z$ is a free point in the sense that the stabilizer of $z$ is trivial.  Indeed, if a nontrivial element $g\in G$ fixes $z$, then $g$ must be elliptic. So it fixes a quasi-geodesic ray $\gamma$ from $o$ to $z$ up to a finite Hausdorff distance. By Lemma \ref{lem: Myrberg chracterization}, $\gamma$ intersects a sequence of  $N_R(g_n\ax(f))$ in a diameter tending to $\infty$. This implies that $g$ preserves some axis  $g_n \ax(f)$, so $g$ belongs to $g_nE(h)g_n^{-1}$. Thus, $g$ must be loxodromic. This   contradiction concludes  the proof of topological freeness.      
\end{proof}
\begin{rem}\label{rmkmonolith}
Here is a short proof of the topological freeness, which was pointed out to the author by Adrien Le Boudec.  
Recall that the \textit{monolith} of a group $G$  is the intersection of all non-trivial normal subgroups of $G$. We say $G$ is \textit{monolithic} if the monolith is non-trivial. Any free group is not monolithic, since any subgroups are free. By a theorem of Dahmani-Guirardel-Osin \cite{DGO}, a non-elementary acylindrically hyperbolic group contains many free normal subgroups, so it is not monolithic. Now, let $Z$ be a
faithful, minimal and extremely proximal $G$-boundary. By \cite[Proposition 4.6.b]{LB21}, if the action of $G$ on $Z$ is not topologically free, then $G$ is monolithic. This contradiction shows the topologically free action of Theorem \ref{thm:ahextremebdry}. 

The proof in \cite[Theorem 3.24]{MWY25B} provides insight into free points, which are Myrberg limit points, in a more elementary manner, while Dahmani–Guirardel–Osin’s theorem requires significantly more advanced machinery such as rotating families.    
\end{rem}

\section{Extreme boundaries from curve graphs and coned-off Cayley graphs}\label{sec: extreme bdry for curves and coned-off}

As a case study we examine in this section  the horofunction boundary of curve graphs and of coned-off Cayley graphs. We hope this illustrates some aspects of Theorem \ref{MainThm} in these setups. 

\subsection{Curve graphs of surfaces}
Let $\Sigma$ be an orientable surface of finite type with negative Euler characteristic.
We say a simple closed curve on $\Sigma$ is \textit{essential} if it is not parallel to a boundary component and does not bound a disc or a punctured disk. An essential simple closed curve is \textit{separating} if its complement in $\Sigma$ is disconnected; otherwise it is called \textit{non-separating}. A \textit{separating/non-separating curve} means an isotopy class of separating/non-separating essential simple closed curve. We say that a set of curves  \textit{miss} another set of curves if they admit disjoint representatives, and otherwise we say that they \textit{cut}. Let  $\mathcal C(\Sigma)$ denote the set of all curves with a graph structure by declaring an edge  between two distinct curves that miss. This is the 1-skeleton of the Harvey's  curve complex, which we call \textit{curve  graph}.  Let $G$ be the mapping class group of a closed orientable surface $\Sigma$.  It is well-known that $\mathcal C(\Sigma)$ is a hyperbolic space and  $G$ acts non-elementarily and acylindrically on $\mathcal C(\Sigma)$.

We write \textit{subsurface} to denote a compact, connected, proper subsurface of $\Sigma$ such that each component of its boundary is essential in $\Sigma$. We only define the \textit{subsurface projection} $$\pi_S: \mathcal C(\Sigma)\to \mathcal C(S)$$ for non-annular subsurface $S$, while the one for annular surface is not needed in this paper.   We say that a curve  $\alpha$  \textit{cuts}   $S$, if it is  contained in $S$ or it cuts the boundary $\partial S$ of $S$. If $N$ is a closed tubular neighborhood of the union of $\alpha$ and the components of $\partial S$ that  $\alpha$ cuts,  we define $\pi_S(\alpha)$ to be the  boundary curves contained in $S$ of $N$.  If $\alpha$ misses $S$, set $\pi_S(\alpha)=\emptyset$.   Since $\pi_S(\alpha)$ has diameter at most 2, $\pi_S$ is a 2-Lipschitz map.

If two curves $\beta,\gamma$   cut the subsurface $S$,  we define $$d_{\alpha}(\beta,\gamma)=\mathrm{diam}_{\mathcal C(S)}(\pi_{S}(\beta)\cup \pi_{S}(\gamma))$$
If $S:=\Sigma\setminus \alpha$ for  a non-separating curve $\alpha$, we write $d_{\alpha}(\beta,\gamma):=d_{S}(\beta,\gamma)$.

The following is Masur-Minsky's Bounded Geodesic Image Theorem.

\begin{lem}\label{lem: Masur Minsky BGI}\cite{MinM2}
There exists a constant $D$ depending on genus $g$ with the following property.
Let $S$ be a subsurface in $\Sigma$. If $(\alpha_0,\cdots,\alpha_n)$ is a geodesic in $\mathcal C(\Sigma)$ so that every  curve $\alpha_i$ cuts $S$. Then $d_{S}(\alpha_i,\alpha_j)\le D$ for any $0\le i, j\le n$.    
\end{lem}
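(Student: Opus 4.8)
The plan is to obtain this from the $\delta$-hyperbolicity of $\mathcal C(\Sigma)$ recorded above together with a surgery argument; this is precisely the Bounded Geodesic Image Theorem of Masur--Minsky \cite{MinM2}, and I would follow its proof (or the shorter route via Webb's argument). First I would record the elementary \emph{Lipschitz property of subsurface projection}: if two curves $\beta,\gamma$ each cut $S$ and are disjoint, then $\pi_S(\beta)$ and $\pi_S(\gamma)$ admit disjoint representatives in $\mathcal C(S)$, so $d_S(\beta,\gamma)\le 2$. Applied along the geodesic $(\alpha_0,\dots,\alpha_n)$ this only yields the bound $d_S(\alpha_0,\alpha_n)\le 2n$, and the entire content of the statement is to upgrade this $n$-dependent estimate to a bound depending only on the genus $g$. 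Note also that it suffices to treat the pair $(\alpha_0,\alpha_n)$ for an arbitrary geodesic, since any subsegment $(\alpha_i,\dots,\alpha_j)$ is again a geodesic all of whose vertices cut $S$.

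I would then argue by contradiction: suppose $d_S(\alpha_0,\alpha_n)>D$ for a threshold $D=D(g)$ to be chosen, while every $\alpha_i$ cuts $S$. The mechanism is that a $\mathcal C(\Sigma)$-geodesic joining two curves whose $\mathcal C(S)$-projections are very far apart is forced to ``dip into'' the set $W$ of curves that miss $S$ (those isotopic off the interior of $S$), contradicting the hypothesis. Concretely, fix a curve $d_0\subset S$ realizing a vertex of $\mathcal C(S)$ within bounded distance of $\pi_S(\alpha_0)$, and surger it successively against $\alpha_1,\alpha_2,\dots$; using the Lipschitz property at each disjoint consecutive pair one gets curves $d_i\subset S$ with $d_{\mathcal C(S)}(d_i,\pi_S(\alpha_i))$ and $d_{\mathcal C(S)}(d_i,d_{i+1})$ uniformly bounded. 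Hyperbolicity of $\mathcal C(\Sigma)$ enters through the coarse Lipschitz behaviour of nearest-point projection to the relevant quasiconvex subset together with the quasiconvexity of geodesics: this prevents the index-to-$d_i$ assignment from accumulating large displacement in $\mathcal C(S)$ unless the geodesic genuinely meets $W$, i.e.\ some $\alpha_i$ misses $S$ — the contradiction. Taking $D$ larger than the accumulated constants completes the argument.

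The hard part is exactly this passage from the trivial linear bound to a uniform one, i.e.\ organising the surgeries and the hyperbolicity (quasiconvexity) estimates so that the constant depends only on the topological type of $\Sigma$ and not on $n$; Masur--Minsky handle this through the hierarchy machinery. The cleaner alternative I would actually prefer is Webb's short proof, which dispenses with hyperbolicity of $\mathcal C(\Sigma)$ entirely and instead establishes, by a purely two-dimensional surgery estimate, that a fixed essential curve in $S$ disjoint from $\alpha_0$ can be carried across the successive vertices of the $\mathcal C(\Sigma)$-geodesic while moving only a bounded total amount in $\mathcal C(S)$, yielding the universal constant $D=D(g)$ directly. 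Either way the constant is topological and $\pi_S$-projections of all vertices of such a geodesic remain within $D$ of one another.
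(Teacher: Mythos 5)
The paper does not prove this lemma at all: it is imported verbatim as the Bounded Geodesic Image Theorem with a bare citation to Masur--Minsky \cite{MinM2}, so there is no in-paper argument to compare against. Your write-up correctly identifies the statement as that theorem, correctly records the elementary Lipschitz estimate for $\pi_S$ on disjoint curves that both cut $S$, correctly isolates where the real content lies (upgrading the trivial bound $d_S(\alpha_0,\alpha_n)\le 2n$ to a constant depending only on the topology of $\Sigma$), and correctly names the two standard routes (the original Masur--Minsky argument and Webb's later combinatorial proof with effective constants, which avoids hyperbolicity of $\mathcal C(\Sigma)$ altogether). As a self-contained proof, however, your text does not close the loop: the decisive step --- why a geodesic all of whose vertices cut $S$ cannot accumulate unbounded displacement in $\mathcal C(S)$ --- is only gestured at (``organising the surgeries and the hyperbolicity estimates'') and then explicitly deferred to the cited machinery. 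Since the paper itself treats the lemma as a black box, deferring to \cite{MinM2} (or to Webb) is entirely consistent with the paper's level of detail; just be aware that what you have written is an annotated citation rather than a proof, and that if a genuine proof were demanded, the surgery bookkeeping you postpone is the whole theorem.
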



Let $(\alpha_0,\cdots,\alpha_n)$ be a path  without backtracking (i.e. $\alpha_{i}\ne \alpha_{j}$ for any $i\ne j$). A non-separating  curve $\alpha_i$  with $1\le i\le n-1$ is called \textit{$K$-local large} for some $K>0$ if $d_{\alpha_i}(\alpha_{i-1},\alpha_{i+1})>K$. 

\begin{lem}\label{lem: guard in curve graph}
For any $c, K\ge 1$, there exists  $L=L(c,K)$ with the following property.
Let $\gamma=(\alpha_{-n},\cdots,\alpha_{-1},\alpha_0,\alpha_1,\cdots, \alpha_m)$ for $n, m\ge 1$ be a $c$-quasi-geodesic with no backtracking. 
If $\alpha_0$ is locally $L$-large in $\gamma$, then $\alpha_0$ is locally $K$-large in any geodesic from $\alpha_{-n}$  to $\alpha_m$.   
\end{lem}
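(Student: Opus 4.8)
The plan is to reduce Lemma~\ref{lem: guard in curve graph} to the Bounded Geodesic Image Theorem (Lemma~\ref{lem: Masur Minsky BGI}) applied to the subsurface $S=\Sigma\setminus\alpha_0$. Set $D$ to be the BGI constant, and let $\beta=(\beta_0=\alpha_{-n},\beta_1,\dots,\beta_\ell=\alpha_m)$ be an arbitrary geodesic from $\alpha_{-n}$ to $\alpha_m$ in $\mathcal C(\Sigma)$. We want: if $d_{\alpha_0}(\alpha_{-1},\alpha_1)>L$, then some $\beta_i$ equals $\alpha_0$ and it is locally $K$-large in $\beta$. First I would record the two-sided ``large projection'' estimate along $\gamma$ itself: since $\gamma$ is a $c$-quasi-geodesic with no backtracking and $\alpha_0$ is locally $L$-large, I would like to say that the endpoints $\alpha_{-n},\alpha_m$ have large $d_{\alpha_0}$-distance. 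The honest route is to split $\gamma$ at $\alpha_0$ into the two subpaths $\gamma_-=(\alpha_{-n},\dots,\alpha_0)$ and $\gamma_+=(\alpha_0,\dots,\alpha_m)$; each is a $c$-quasi-geodesic, so by the standard consequence of BGI for quasi-geodesics (any curve on a quasi-geodesic not meeting $S$ forces a bounded projection, with bound depending only on $c$ and $g$) I get $d_{\alpha_0}(\alpha_{-n},\alpha_{-1})\le D'=D'(c,g)$ unless some vertex of $\gamma_-$ strictly between $\alpha_{-n}$ and $\alpha_0$ misses $S$ — but ``misses $\Sigma\setminus\alpha_0$'' means equals or is disjoint from $\alpha_0$, and no-backtracking plus the quasi-geodesic bound on nearby vertices handles the disjoint case. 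Hence, choosing $L>2D'+D+4$, I conclude $d_{\alpha_0}(\alpha_{-n},\alpha_m)>d_{\alpha_0}(\alpha_{-1},\alpha_1)-2D'>L-2D'>D$, using the triangle inequality for subsurface projections and the fact that $\pi_S$ has diameter at most $2$.

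Next I would apply the contrapositive of Lemma~\ref{lem: Masur Minsky BGI} to the geodesic $\beta$: since $d_S(\alpha_{-n},\alpha_m)=d_{\alpha_0}(\alpha_{-n},\alpha_m)>D$, it cannot be that every vertex of $\beta$ cuts $S$, so some $\beta_i$ misses $S=\Sigma\setminus\alpha_0$; as $\beta$ has no backtracking and $\beta_i$ is disjoint from or equal to $\alpha_0$, and the two neighbours $\beta_{i\pm1}$ are distinct from $\beta_i$, adjacency in $\mathcal C(\Sigma)$ forces $\beta_i=\alpha_0$ (if $\beta_i\neq\alpha_0$ were merely disjoint from $\alpha_0$, one checks $\alpha_0$ could be inserted, contradicting that $\beta$ is geodesic of the stated length, or one simply notes $\alpha_0$ lies within distance $1$ and re-runs the argument on a nearby vertex). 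So $\alpha_0$ appears as an interior vertex $\beta_i$ of $\beta$. Finally, to see $\alpha_0$ is locally $K$-large in $\beta$, I apply BGI once more to the two half-geodesics of $\beta$ on either side of $\beta_i=\alpha_0$: their interior vertices all cut $S$ (else $\alpha_0$ would appear twice, contradicting no-backtracking), so $d_S(\alpha_{-n},\beta_{i-1})\le D$ and $d_S(\beta_{i+1},\alpha_m)\le D$; combined with $d_{\alpha_0}(\alpha_{-n},\alpha_m)>L-2D'$ and the triangle inequality, this gives $d_{\alpha_0}(\beta_{i-1},\beta_{i+1})>L-2D'-2D-4>K$ for $L$ chosen large enough, i.e. $L=L(c,K)$ works.

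The main obstacle I anticipate is not the BGI applications themselves but the bookkeeping around the phrase ``$\alpha_i$ cuts $S$'' when $S=\Sigma\setminus\alpha_0$: one must carefully distinguish the genuinely dangerous case (a vertex equal to $\alpha_0$) from the harmless case (a vertex disjoint from but unequal to $\alpha_0$), and argue that in a geodesic (or quasi-geodesic with no backtracking) the latter essentially forces the former nearby, so that the large subsurface projection is genuinely witnessed by the appearance of $\alpha_0$ on the path. This is where the no-backtracking hypothesis and the explicit $c$-quasi-geodesic constant are used, and getting the constant $L=L(c,K)$ to depend only on $c,K$ (and the fixed genus $g$) requires keeping all the intermediate bounds $D,D'$ uniform. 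Once that is pinned down, the rest is the triangle inequality for $d_S$ and the Lipschitz bound on $\pi_S$.
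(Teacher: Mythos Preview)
Your overall strategy matches the paper's: use BGI on $S=\Sigma\setminus\alpha_0$ to show $d_{\alpha_0}(\alpha_{-n},\alpha_m)$ is large, force $\alpha_0$ onto any geodesic $\beta$ by the contrapositive of BGI, then apply BGI to the two halves of $\beta$. However, you are overcomplicating the ``cuts $S$'' bookkeeping because you have missed the one-line observation the paper exploits: since $\alpha_0$ is \emph{non-separating}, $S=\Sigma\setminus\alpha_0$ is connected, and therefore every essential curve distinct from $\alpha_0$ cuts $S$ (it is either contained in $S$ or intersects $\partial S$). Thus ``misses $S$'' means exactly ``equals $\alpha_0$'', and your anticipated main obstacle evaporates: on $\gamma_\pm$ no vertex other than $\alpha_0$ itself can miss $S$ (no-backtracking rules out repeats), and on $\beta$ the vertex forced by BGI to miss $S$ must literally be $\alpha_0$. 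Your hedge ``if $\beta_i\ne\alpha_0$ were merely disjoint from $\alpha_0$, one checks $\alpha_0$ could be inserted, contradicting that $\beta$ is geodesic'' is both unnecessary and incorrect as written (inserting a vertex lengthens a path).

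On a secondary point, the paper does not invoke a black-box ``BGI for quasi-geodesics''. Instead it picks $k=k(c)$ so that any geodesic from $\alpha_{-n}$ to $\alpha_{-k}$ (respectively $\alpha_k$ to $\alpha_m$) stays at distance $\ge 2$ from $\alpha_0$, applies Lemma~\ref{lem: Masur Minsky BGI} to those genuine geodesics, and absorbs the short middle pieces via the $2$-Lipschitz bound $d_{\alpha_0}(\alpha_{-k},\alpha_{-1}),\,d_{\alpha_0}(\alpha_{1},\alpha_{k})\le 2k$. This keeps the argument within what is actually stated in the paper and makes the dependence $L=L(c,K)$ explicit. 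Your route via a quasi-geodesic BGI is fine if you are willing to cite that version, but it is not needed.
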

\begin{proof}
Since $\gamma$ is a $c$-quasi-geodesic, we may find $k>0$ depending on $c$ only with the following property.
If $p$ is a geodesic from $\alpha_{-n}$ to $\alpha_{-k}$, and $q$ is a geodesic from $\alpha_{k}$ to $\alpha_m$, which may be empty if $k>n$ or $k>m$, then $d(p,\alpha_0),d(q,\alpha_0)\ge 2$. Thus, each curve on $p$ cuts  $S:=\Sigma\setminus \alpha$, so $\pi_\alpha(\alpha_{-n},\alpha_{-k})\le D$ by Lemma \ref{lem: Masur Minsky BGI}. Similarly,  $\pi_\alpha(\alpha_{2},\alpha_{m})\le D$. Note that $\pi_\alpha$ is 2-Lipschitz, so $\pi_\alpha(\alpha_{-k},\alpha_{-1}),\pi_\alpha(\alpha_{1},\alpha_{k})\le 2k$. Since $d_{\alpha_0}(\alpha_{-1},\alpha_1)\ge L$ by assumption, the triangle inequality for $\pi_\alpha$ shows that $d_\alpha(\alpha_{-n},\alpha_{m})\ge L-2(D+k)=:K$.
Since $\alpha$ is non-separating, any essential curve that is distinct from $\alpha$ has to cut $\Sigma\setminus \alpha$. Choosing $L$ large enough so that $L-2(D+k)>D$,  the pages 910-911 of \cite{MinM2} proves via Lemma \ref{lem: Masur Minsky BGI} that $\alpha$ must appear in any geodesic from $\alpha_{-n}$  to $\alpha_m$.  The conclusion follows.
\end{proof}
\begin{lem}\label{lem: exist K local large sequence}
Let $p$ be a Myrberg limit point for the action of $G$ on $\partial\mathcal C(\Sigma)$.
Then for any $K\ge 0$, there exists a geodesic ray $\gamma$ ending at $\xi$ which contains an  unbounded sequence of $K$-locally large non-separating curves. In particular, the $[\cdot]$-class of a Myrberg limit point is minimal. 
\end{lem}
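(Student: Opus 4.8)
The plan is to adapt the proof of Lemma \ref{lem: local minimum map injective Myrberg} to the curve graph, using the Bounded Geodesic Image Theorem (Lemma \ref{lem: Masur Minsky BGI}, repackaged as Lemma \ref{lem: guard in curve graph}) in place of Lemma \ref{lem: BGIT}, and Lemma \ref{lem: Myrberg chracterization} to feed locally large non-separating curves into a geodesic ray towards $p$. Fix $K\ge 0$; let $c$ be a uniform quasi-geodesic constant (fixed in Step 1) and $L=L(c,K)$ the constant of Lemma \ref{lem: guard in curve graph}.

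\emph{Step 1: a pseudo-Anosov with a non-separating, locally large axis curve.} I would first produce a pseudo-Anosov $h\in G$ and a non-separating curve $\alpha$ which, as a vertex of the quasi-axis $\ax(h)$ based at $\alpha$, is locally $L$-large; it suffices that $d_\alpha(h^{-1}\alpha,h\alpha)>L+2D$ for the constant $D$ of Lemma \ref{lem: Masur Minsky BGI}, since all intermediate vertices of $[\alpha,h\alpha]$ cut $\Sigma\setminus\alpha$ and Lemma \ref{lem: Masur Minsky BGI} then upgrades this to local $L$-largeness of $\alpha$ in $\ax(h)$. This is the one genuinely technical point: one takes $S=\Sigma\setminus\alpha$, a partial pseudo-Anosov $\phi$ supported on $S$, and a pseudo-Anosov $g$ with $g\alpha$ cutting $S$, and checks that for $N$ large $h=g\phi^{N}$ is pseudo-Anosov with $d_\alpha(h^{-1}\alpha,h\alpha)$ as large as desired (because $\phi^{N}$ moves $g^{-1}\alpha$ arbitrarily far inside $\mathcal C(S)$ and subsurface projection is coarsely Lipschitz); the relevant quasi-geodesic and Morse constants are either controlled uniformly in $N$ or, since the statement quantifies only over $K$, simply allowed to depend on $K$. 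I expect this bookkeeping, rather than anything that follows, to be the main obstacle.

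\emph{Step 2: importing local largeness onto a geodesic ray to $p$.} Let $\gamma'$ be a quasi-geodesic ray from $o$ to $p$. Since $p$ is a Myrberg limit point, Lemma \ref{lem: Myrberg chracterization} gives $g_n\in G$ with $\mathrm{diam}\big(\gamma'\cap N_R(g_n\ax(h))\big)\to\infty$; by the Morse property $\gamma'$ fellow-travels a segment of $g_n\ax(h)$ of length $\to\infty$. Choosing in the middle third of that segment a translate $\beta_n$ of $\alpha$ lying on $g_n\ax(h)$ (so $\beta_n$ is non-separating and locally $L$-large on $g_n\ax(h)$) and replacing the fellow-travelled part of $\gamma'$ by the corresponding arc of $g_n\ax(h)$ yields a quasi-geodesic ray $\tilde\gamma_n$ from $o$ to $p$, with $c$ independent of $n$, on which $\beta_n$ is a non-separating, locally $L$-large vertex at distance $\to\infty$ from $o$. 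By Lemma \ref{lem: guard in curve graph}, $\beta_n$ is locally $K$-large in every geodesic from $o$ to any vertex of $\tilde\gamma_n$ lying far enough towards $p$; the usual guard argument (Definition \ref{defn: guardian}) then places every $\beta_n$ on one common geodesic ray $\gamma$ from $o$ to $p$ (e.g.\ by stringing the $\beta_n$ together along geodesics), still as a locally $K$-large vertex. As the $\beta_n$ are distinct and escape to infinity, $\gamma$ witnesses the first assertion.

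\emph{Step 3: minimality of the $[\cdot]$-class.} Take $K>D$ and let $\xi\in\partial_h^\infty\mathcal C(\Sigma)$ be the horofunction limit of $\gamma$, so $\Pi(\xi)=p$; by Lemma \ref{localminimalmap}(1) the whole fibre $\Pi^{-1}(p)$ lies in a single $[\cdot]$-class, which is therefore $[\xi]$, the $[\cdot]$-class attached to $p$. Given $\eta$ with $[\eta]=[\xi]$, pick $y_m\to\eta$. Since $\|b_\eta-b_\xi\|_\infty<\infty$ and $b_\xi(\beta_n)=-d(o,\beta_n)\to-\infty$, the values $b_{y_m}(\beta_n)$ become very negative for $m\gg n$, i.e.\ $y_m$ lies far towards $p$ beyond $\beta_n$; running a quasi-geodesic from a prescribed vertex $z$ through $o$, along $\gamma$ across $\beta_n$, and out to $y_m$ (and invoking the bounded-projection estimate of Lemma \ref{lem: Masur Minsky BGI} when $\beta_n$ is far from $z$), Lemma \ref{lem: guard in curve graph} shows that for all large $m$ each $\beta_n$ is a guard between $z$ and $y_m$. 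As $\beta_n\to\xi$, Lemma \ref{lem: minimal class criterion} gives $\eta=\xi$, so $[\xi]=\{\xi\}$; this last step is routine, exactly as in Lemma \ref{lem: local minimum map injective Myrberg}.
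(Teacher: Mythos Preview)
Your approach is sound and differs from the paper's in a meaningful way. The paper does not invoke Lemma~\ref{lem: Myrberg chracterization} at all: it instead builds an explicit infinite word $W=h_1f_1gf_1'h_2f_2gf_2'\cdots$ (enumerating all pseudo-Anosovs $h_i$, with $g$ a fixed element such that $[o,go]$ contains a locally $L$-large non-separating vertex, and $f_i,f_i'$ chosen via Lemma~\ref{extend3}) and takes $\gamma$ to be the resulting labelled quasi-geodesic ray, exactly as in the proof of Lemma~\ref{lem: uncountable Myrberg}. Read literally, this only produces \emph{some} Myrberg point carrying the stated property, not the conclusion for an arbitrary given $p$ that the lemma asserts and that the ``moreover'' clause of Theorem~\ref{thm: curve graph boundary} actually needs. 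Your route through the Myrberg characterization, parallel to Lemma~\ref{lem: local minimum map injective Myrberg}, is the argument that genuinely handles a prescribed $p$; it buys you the full statement at the cost of the extra splicing in Step~2.

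Both arguments rest on the same unproved input: the existence of a segment carrying a locally $L$-large non-separating vertex (your Step~1, the paper's ``fix an element $g$''). The paper simply asserts it; your $h=g\phi^N$ sketch is reasonable, and as you say this is the only real bookkeeping hurdle. In Step~2, the passage to a single geodesic ray works because each $\beta_n$, being a guard between $o$ and every point sufficiently far toward $p$, forces the concatenation $[o,\beta_1]\cup[\beta_1,\beta_2]\cup\cdots$ to be an honest geodesic ray on which each $\beta_n$ remains locally $K$-large; you could make this one sentence explicit. Step~3 is routine once one observes that $y_m\to p$ in the Gromov boundary (since $\Pi(\eta)=p$), so Lemma~\ref{lem: Masur Minsky BGI} applied to the three short geodesics avoiding $\beta_n$ gives $d_{\beta_n}(z,y_m)\ge d_{\beta_n}(\beta_n^-,\beta_n^+)-O(D)>D$, whence $\beta_n$ is a guard and Lemma~\ref{lem: minimal class criterion} applies.
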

\begin{proof}
As in Lemma \ref{lem: uncountable Myrberg}, we   construct a geodesic ray $\gamma$ with desired properties ending at a Myrberg point. 
To this end, let $\mathcal L=\{h_1,h_2,\cdots\}$ be all pseudo-Anosov elements in $G$. Fix a base point $o$. For any $L$, fix an element $g\in G$ so that $[o,go]$ contains a local $L$-large vertex $v$. Let us form the word $W=h_1f_1gf_1'h_2f_2gf_2'\cdots h_nf_ngf_n'\cdots$    where $f_i, f_i'$ are chosen in $F$ by Lemma \ref{extend3} for the pair $(h_i,g)$ and $(g,h_{i+1})$. Let $\gamma$ the labeled path by $W$   in the same manner as in  the proof of  Lemma  \ref{lem: uncountable Myrberg}. Then the path $\gamma$ is a $c$-quasi-geodesic ending at a Myrberg point for a constant $c>1$. Taking $L$ large enough, Lemma \ref{lem: guard in curve graph} shows that the corresponding translates of $v$ on $\gamma$  are $K$-locally large. The ``in particular" clause follows by Lemma \ref{lem: accumulation point criterion}.   
\end{proof}

\begin{lem}\label{lem: non-separating curve}
Each non-separating  curve  is an accumulation point of the horofunction boundary $\partial_h \mathcal C(\Sigma)$.    
\end{lem}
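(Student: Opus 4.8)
The plan is, given a non-separating curve $v$, to exhibit an infinite family of pairwise distinct horofunctions $\xi_n\in\partial_h^\infty\mathcal C(\Sigma)$ for which $v$ is a guard between \emph{every} vertex $y$ and $\xi_n$ once $n$ is large, and then to apply Lemma~\ref{lem: accumulation point criterion}. Write $W:=\Sigma\setminus v$; since $v$ is non-separating, $W$ is connected, and because $\mathrm{genus}(\Sigma)>2$ it is rich enough to support pseudo-Anosov maps. The structural fact I would lean on is that, for non-separating $v$, the only essential curve with empty subsurface projection to $W$ is $v$ itself (a curve disjoint from $W$ is isotopic to $v$); together with the Bounded Geodesic Image theorem (Lemma~\ref{lem: Masur Minsky BGI}) this yields: if $d_W(a,b)>D$ then every geodesic from $a$ to $b$ in $\mathcal C(\Sigma)$ passes through $v$.

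To build the $\xi_n$: let $\eta\in G$ be a partial pseudo-Anosov supported on $W$, so $\eta$ fixes $v$ and acts loxodromically on $\mathcal C(W)$; and fix a pseudo-Anosov $\psi\in G$ of $\Sigma$ with attracting point $\psi^{+}\in\partial\mathcal C(\Sigma)$. Since $\psi^{+}$ is filling it crosses $v$, so $\pi_W(\psi^{+})$ is a nonempty bounded set. Set $\xi_n:=\eta^{n}\psi^{+}\in\partial\mathcal C(\Sigma)$. Naturality of subsurface projection gives $\pi_W(\xi_n)=\eta^{n}\pi_W(\psi^{+})$, so loxodromicity of $\eta$ on $\mathcal C(W)$ forces $d_W(o,\xi_n)\to\infty$ for a fixed basepoint $o\ne v$; in particular the $\xi_n$ are infinitely-many-distinct, and after passing to a subsequence we may take them pairwise distinct. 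Each $\xi_n\in\partial\mathcal C(\Sigma)$ is the $\Pi$-image of a nonempty $[\cdot]$-class inside $\partial_h^\infty\mathcal C(\Sigma)$ by Lemma~\ref{localminimalmap}; choose a representative horofunction, still denoted $\xi_n$, and these stay distinct because their $\Pi$-images do.

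Now I would verify the guard condition of Lemma~\ref{lem: accumulation point criterion}. Fix $y\in\mathcal C(\Sigma)$ (the case $y=v$ being trivial since then $v$ is an endpoint). The triangle inequality for subsurface projections gives $d_W(y,\xi_n)\ge d_W(o,\xi_n)-d_W(o,y)\to\infty$, hence $d_W(y,\xi_n)>D$ for all $n\ge N_y$. For such $n$ and any sequence of vertices $z_j$ with $b_{z_j}\to\xi_n$ in $\overline{\mathcal C(\Sigma)}_h$, one has $z_j\to\Pi(\xi_n)=\xi_n$ in the Gromov boundary $\partial\mathcal C(\Sigma)$ (the standard compatibility of the local minimum map with horofunction convergence on $\partial_h^\infty$; if one wishes to avoid quoting it, it follows in two lines from a Gromov-product estimate using $\inf\xi_n=-\infty$). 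Consequently $\pi_W(z_j)\to\pi_W(\xi_n)$, so $d_W(y,z_j)>D$ for $j$ large, while $z_j\ne v$ eventually. By Lemma~\ref{lem: Masur Minsky BGI} a geodesic $[y,z_j]$ cannot have all its vertices cutting $W$, so one of its vertices fails to cut $W$ and is therefore $v$; thus $v$ lies on every geodesic from $y$ to $z_j$ for $j\gg0$, i.e.\ $v$ is a guard between $y$ and $\xi_n$ in the sense of Definition~\ref{defn: guardian}, for every $n\ge N_y$. Feeding the pairwise distinct boundary points $\{\xi_n\}\subset\partial_h\mathcal C(\Sigma)$ and the vertex $v$ into Lemma~\ref{lem: accumulation point criterion} shows that $v$ is an accumulation point of $\partial_h\mathcal C(\Sigma)$; and $v$ was an arbitrary non-separating curve.

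The one genuinely non-formal point is exactly what the construction is designed to circumvent: $\mathcal C(\Sigma)$ is non-proper, so one cannot naively prolong a geodesic segment through $v$ to a ray landing in $\partial\mathcal C(\Sigma)$ while keeping the $W$-projection large. Transporting a \emph{fixed} pseudo-Anosov endpoint by powers of a partial pseudo-Anosov of $W$ resolves this, since loxodromicity on $\mathcal C(W)$ drives $d_W(o,\xi_n)$ to infinity for free. The remaining routine point is the compatibility ``$b_{z_j}\to\xi\in\partial_h^\infty\Rightarrow z_j\to\Pi(\xi)$'', which I would either cite from the Maher--Tiozzo framework or dispatch with a short Gromov-product estimate.
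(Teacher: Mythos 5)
Your proof is correct and follows essentially the same route as the paper: both translate a fixed point at infinity by powers of a partial pseudo-Anosov supported on $\Sigma\setminus v$, use the Bounded Geodesic Image theorem (plus the fact that $v$ is the only curve missing $\Sigma\setminus v$) to make $v$ a guard, and conclude via Lemma~\ref{lem: accumulation point criterion}. The only cosmetic difference is that you translate a pseudo-Anosov fixed point in the Gromov boundary and lift through the local-minimum map, whereas the paper translates the endpoint of a geodesic ray directly in the horofunction boundary; your extra care with the compatibility ``$b_{z_j}\to\xi\Rightarrow z_j\to\Pi(\xi)$'' is a reasonable way to handle the non-properness.
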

\begin{proof}
Let $v$ be a non-separating  curve on $\Sigma$. We are going to find a sequence of boundary points $\xi_n$ so that $\xi_n\to v$. Denote $S=\Sigma\setminus v$. Choose a geodesic ray $p$ starting at $v$ which ends at   $\xi$ in  the horofunction boundary  $\partial_h \mathcal C(\Sigma)$.  Pick a partial pseudo-Anosov element $f$ on $\Sigma$; that is, $f$ restricts to be a pseudo-Anosov element on $S$ and leaves $v$ invariant. We claim that $\xi_n:=f^n\xi\to v$. 

Indeed, let $u\in p$ be the vertex next to $v$, which cuts $S$. As $f\vert_S$ is pseudo-Anosov and acts as a loxodromic isometry on $\mathcal C(S)$, we see that $d_S(u,f^nu)\to\infty$ as $n\to\infty$.  For any curve $v\ne w\in \mathcal C(\Sigma)$, let us consider the concatenated path by the geodesics $[w,v]$ and $f^n[v,\xi]$. Since $d_S(u,f^nu)\to\infty$, a similar argument as in Lemma \ref{lem: guard in curve graph} shows   that for all large enough $n\gg 1$ (depending on $w$), $v$ is a guard from $w$ to $\xi_n$. Then $\xi_n\to v$ follows by Lemma \ref{lem: accumulation point criterion}.  
\end{proof}

Denote by $\mathcal C^{ns}(\Sigma)$ the set of non-separating  curves.

\begin{thm}\label{thm: curve graph boundary}
There exists  a unique $G$-minimal closed subset denoted as $\partial G$ in the horofunction compactification $\overline{\mathcal C(\Sigma)}_h$    so that $\mathcal C^{ns}(\Sigma)=\partial G\cap \mathcal C(\Sigma)$. Moreover, the local minimum map  in Lemma \ref{localminimalmap} restricts to be injective over the Myrberg limit set in $\partial \mathcal C(\Sigma)$.     
\end{thm}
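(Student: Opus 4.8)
The plan is to exhibit a distinguished loxodromic element whose minimal fixed points generate, under the $G$-action, the required closed subset, and then identify the intersection with the vertex set $\mathcal C(\Sigma)$ using the accumulation results already established. The construction follows the template of Section \ref{SSecProjectionComplex}, but here the combinatorics of subsurface projections replaces the projection-complex axioms.

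First I would run the construction of Proposition \ref{prop: minimal loxodromic elements} in the curve-graph setting: take a partial pseudo-Anosov $f$ (pseudo-Anosov on $S=\Sigma\setminus v$ for a fixed non-separating curve $v$, fixing $v$) together with the Extension Lemma \ref{extend3}, and produce pseudo-Anosov elements $h=gf^n$ with $n\gg 1$ so that a bi-infinite quasi-geodesic axis for $h$ passes through $v$ and has an unbounded sequence of $K$-locally large non-separating curves on it. Lemma \ref{lem: guard in curve graph} then promotes these to genuine guards in any geodesic, which via Lemma \ref{lem: minimal class criterion} forces the two $[\cdot]$-classes $[h^-],[h^+]$ to be singletons; so $h$ has minimal fixed points, and by Lemma \ref{lem: minimal loxodromic with ns dynamics} (whose proof only uses $\delta$-hyperbolicity, acylindricity, and the local minimum map) $h$ admits north--south dynamics on $\overline{\mathcal C(\Sigma)}_h$. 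Let $\mathcal L$ be the collection of all such loxodromics and set $\partial G:=\overline{\{g^-,g^+:g\in\mathcal L\}}$. North--south dynamics of each $g\in\mathcal L$ on $\overline{\mathcal C(\Sigma)}_h$ gives minimality of $\partial G$ exactly as in Lemma \ref{lem: minimal loxodromic with ns dynamics}, and uniqueness follows since any $G$-invariant closed set must contain the attractor of some $g\in\mathcal L$ hence all of $\partial G$.

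Next I would pin down $\partial G\cap\mathcal C(\Sigma)$. One inclusion: every non-separating curve $v$ lies in $\partial G$, because by Lemma \ref{lem: non-separating curve} it is an accumulation point of boundary points $\xi_n=f^n\xi\in\partial_h\mathcal C(\Sigma)$, and after translating by elements realizing the density in Lemma \ref{limitsetbigtwo} one can arrange these $\xi_n$ to be attractors/repellers of elements of $\mathcal L$ (or, more directly, observe that the $\xi$ produced in the proof of Lemma \ref{lem: non-separating curve} itself lies in the $G$-orbit closure of fixed points via north--south dynamics); hence $v\in\overline{\partial G}=\partial G$. Conversely, a separating curve $w$ is \emph{not} in $\partial G$: since $G$ acts transitively on separating curves of a fixed topological type and $\partial G$ is $G$-invariant and closed, it suffices to rule out one such $w$; and $w$ cannot be an accumulation point of $\{g^\pm:g\in\mathcal L\}$ because for any geodesic through $w$ the two ``sides'' are compressed into complementary subsurfaces of $\Sigma\setminus w$, so no $K$-locally large non-separating curve sequence can pass through $w$, contradicting the description of axes of elements of $\mathcal L$ — concretely, if $b_{g_n^{\pm}}\to b_w$ then $w$ would be a guard between distant points in a way incompatible with Lemma \ref{lem: Masur Minsky BGI} applied across $w$. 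Finally the ``moreover'' clause is Lemma \ref{lem: local minimum map injective Myrberg}, whose proof (via Lemmas \ref{lem: Masur Minsky BGI}, \ref{lem: guard in curve graph}, \ref{lem: exist K local large sequence}, \ref{lem: minimal class criterion}) transfers verbatim to the curve graph: a Myrberg ray contains an unbounded $K$-locally large sequence for every $K$, which forces its $[\cdot]$-limit to be a singleton.

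\emph{Main obstacle.} The delicate point is the separating-versus-non-separating dichotomy, i.e.\ showing $\partial G\cap\mathcal C(\Sigma)$ is \emph{exactly} $\mathcal C^{ns}(\Sigma)$. The containment $\mathcal C^{ns}(\Sigma)\subseteq\partial G$ needs the accumulation argument of Lemma \ref{lem: non-separating curve} to be compatible with the specific orbit of minimal fixed points defining $\partial G$ (not merely with the full horofunction boundary), and the reverse exclusion of separating curves requires a careful use of the Bounded Geodesic Image theorem in the subsurface $\Sigma\setminus w$: one must show that no element of $\mathcal L$ can have an axis with large projection to $\Sigma\setminus w$ while passing near $w$, because a separating curve splits every path through it into two pieces living in different components. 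I expect this verification — essentially that the guardian/accumulation criteria of \textsection\ref{sec: minimal and isolated points} are sensitive to the topological type of the curve — to be where the real work lies; the north--south dynamics and minimality are by now routine given Lemma \ref{lem: minimal loxodromic with ns dynamics}.
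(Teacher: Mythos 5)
Your overall architecture matches the paper's: build pseudo-Anosov elements with minimal fixed points as in Proposition \ref{prop: minimal loxodromic elements}, set $\partial G=\overline{\{gf^\pm:g\in G\}}$, get minimality/uniqueness from north--south dynamics, obtain $\mathcal C^{ns}(\Sigma)\subseteq\partial G$ from Lemma \ref{lem: non-separating curve}, and get the ``moreover'' clause from Lemma \ref{lem: exist K local large sequence}. Up to that point you are on track (one small caveat: for the inclusion $\mathcal C^{ns}(\Sigma)\subseteq\partial G$ you should start the ray $p$ in the proof of Lemma \ref{lem: non-separating curve} at a point of $\partial G$ itself, e.g.\ an attractor of an element of $\mathcal L$, so that the approximating points $\xi_n=f^n\xi$ actually lie in the closed invariant set $\partial G$).

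The genuine gap is the exclusion of separating curves, which you correctly flag as ``where the real work lies'' but do not actually close. Your proposed argument --- that a separating curve $w$ cannot satisfy the guardian/local-largeness criteria because the two sides of a path through $w$ live in complementary subsurfaces --- at best shows that $w$ fails the \emph{sufficient} condition of Lemma \ref{lem: accumulation point criterion} for being an accumulation point; it does not show that $w$ fails to be an accumulation point of $\{g^\pm:g\in\mathcal L\}$ in the horofunction topology, which is what is needed. (Convergence $b_{\xi_n}\to b_w$ is a statement about all of $\mathrm{Lip}_1^o$, not about geodesics being forced through $w$.) The paper's route is entirely different and much shorter: by the theorem of Birman--Menasco, every separating curve is a \emph{dead end} relative to some curve at distance $2$, hence by Lemma \ref{lem: dead end is isolated} it is an isolated point of $\overline{\mathcal C(\Sigma)}_h$ and therefore cannot be an accumulation point of boundary horofunctions at all; since $\partial G$ is the closure of a set of boundary points, no separating curve lies in it. Without this external input (or a genuine substitute proving that no sequence $y_n\to\infty$ in $\mathcal C(\Sigma)$ satisfies $b_{y_n}\to b_w$), your exclusion step does not go through.
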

\begin{proof}
By a similar argument as in Lemma \ref{prop: minimal loxodromic elements}, there are pseudo-Anosov elements $f$ with minimal fixed points in $\partial_h \mathcal C(\Sigma)$. Let $\partial G$ be the closure of $\{gf^\pm: g\in G\}$ in the horofunction compactification $\overline{\mathcal C(\Sigma)}_h$. It is the minimal closed subset by Lemma \ref{limitsetbigtwo}.

By \cite{BirmanMen}, every separating curve is a dead end relative to a curve at distance 2 in $\mathcal C(\Sigma)$.  Thus, every separating curve is isolated in $\overline{\mathcal C(\Sigma)}_h$. The $\mathcal C^{ns}(\Sigma)=\partial G\cap \mathcal C(\Sigma)$ follows by Lemma \ref{lem: non-separating curve}.   The ``moreover" statement follows by Lemma \ref{lem: exist K local large sequence}.
\end{proof}

\subsection{Coned-off Cayley graphs of relatively hyperbolic groups}\label{subsec: coned-off cayley graphs}
Let $(G,\mathcal P)$ be a  finitely generated  group with  a finite collection of infinite  subgroups  $\mathcal P$ called peripheral subgroups. Fixing a finite generating set $S$, one can construct the   \textit{coned-off Cayley graph} $\hat \Gamma(G,S)$ as in \cite{Farb}  from the Cayley graph $\Gamma(G,S)$ as follows. For each left coset $gP$ with $g\in G$ and $P\in \mathcal P$, we  add an additional \textit{cone point} $c({gP})$  and then join $c({gP})$ to each element in $gP$ by an edge. Denote $\mathcal {C}(\mathcal P)=\{c({gP}): g\in G,P\in \mathcal P\}$ the set of the cone points. Then the vertex set of $\hat \Gamma(G,S)$ is the union of all group elements in $G$  and $\mathcal {C}(\mathcal P)$.  
A path $p$ has no \textit{backtracking} if it passes through each cone points at most once. We write $X=\hat \Gamma(G,S)$ for simplicity. We denote by $d$  the graph metric on $X$ and $d_S$ for the word metric on $\Gamma(G,S)$.

Similar to the subsurface projection, we may define a projection map $\pi_{gP}$ for each $gP$. Let  $x\in X$ be  a vertex which is either  a group element or a cone point.  Define $\pi_{gP}(x)$  to be the set of  vertices  on  some  geodesic from $x$ to $c(gP)$ which are adjacent to $c(gP)$. Write $d_{gP}(x,y):=\diam {\pi_{gP}(x)\cup\pi_{gP}(y)}$ where the diameter is measured using word metric $d_S$.

\begin{defn}\cite{Farb}
We say that $\hat \Gamma(G,S)$ satisfies  \textit{Bounded Coset Penetration} (BCP) property in sense of Farb if for any $c>1$, there exists a constant $K=K(c)$ with the following property. Let $p, q$ be two $c$-quasi-geodesics in $\hat \Gamma(G,S)$ without backtracking with same endpoints $p_-=q_-, p_+=q_+$. 
\begin{enumerate}
    \item   If $p$ and $q$ pass through a cone point $c(gP)$, then $d_{gP}(p_-,q_-)\le K$ and  $d_{gP}(p_+,q_+)\le K$. 
    \item   If $p$ passes a cone point $c(gP)$, but $q$ does not, then $d_{gP}(p_-,p_+)\le K$. 
\end{enumerate}   
The pair $(G,\mathcal P)$ is called \textit{relatively hyperbolic} if $\hat \Gamma(G,S)$ is a hyperbolic space and satisfies the BCP property. 
\end{defn}
In what follows, assume that  $(G,\mathcal P)$ is {relatively hyperbolic}.

Under the assumption that $\hat \Gamma(G,S)$ is hyperbolic,  the BCP (2) implies BCP (1). In line with Lemma \ref{lem: Masur Minsky BGI}, we shall use the following equivalent form of BCP property.
\begin{lem}\label{lem: BCP property}
There exists a constant $K>0$ with the following property.
Let $[x,c(gP)], [y,c(gP)]$ be two geodesics in $\hat \Gamma(G,S)$ from $x$ and $y$ to a cone point  $c(gP)$. If  $d_{gP}(x,y)>K$, then  any geodesic $[x,y]$ has to pass through $c({gP})$.    
\end{lem}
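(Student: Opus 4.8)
The plan is to deduce Lemma \ref{lem: BCP property} from the usual form of the BCP property (parts (1) and (2)) together with the assumed hyperbolicity of $\hat\Gamma(G,S)$, by contraposition. So suppose $[x,y]$ is a geodesic that does \emph{not} pass through the cone point $c(gP)$, and that $[x,c(gP)]$, $[y,c(gP)]$ are geodesics; I want to bound $d_{gP}(x,y)$ by a constant $K$ depending only on $(G,\mathcal P)$ and the generating set $S$.

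\textbf{Step 1: produce a competing quasi-geodesic through $c(gP)$.} Concatenate $[x,c(gP)]$ with the reverse of $[y,c(gP)]$ to obtain a path $q$ from $x$ to $y$ passing through $c(gP)$. A priori $q$ need not be a quasi-geodesic (the two geodesic legs could fellow-travel and cancel), but by $\delta$-hyperbolicity of $\hat\Gamma(G,S)$ the concatenation of two geodesics meeting at a point is a $c$-quasi-geodesic for a uniform $c=c(\delta)$ \emph{provided} one first trims the obvious backtracking, i.e. replaces the two legs by subgeodesics that meet only at their common endpoint's "Gromov-product" region. Concretely, one lets $c(gP)$ be the apex: the path $q$ is a local geodesic except possibly near $c(gP)$, and after a bounded modification near $c(gP)$ (controlled by $\delta$) we may take $q$ to be a $c$-quasi-geodesic from $x$ to $y$ with no backtracking that still passes through $c(gP)$, with $c$ depending only on $\delta$. (If the trimming removes $c(gP)$ entirely, then the original $d(x,c(gP))$ was already $O(\delta)$, and then $d_{gP}(x,y)$ is bounded directly since $\pi_{gP}$ is coarsely Lipschitz on a bounded set.)

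\textbf{Step 2: apply BCP.} Now $q$ is a $c$-quasi-geodesic without backtracking through $c(gP)$, while $[x,y]$ is a geodesic (hence a $1$-quasi-geodesic) without backtracking that, by our contrapositive hypothesis, does \emph{not} pass through $c(gP)$; both have endpoints $x,y$. By BCP part (2) applied with the roles ``$p=q$'' and ``$q=[x,y]$'', we get $d_{gP}(q_-,q_+)=d_{gP}(x,y)\le K(c)$, a constant depending only on $c$ (hence only on $\delta$ and $S$). This is exactly the desired bound, so by contraposition: if $d_{gP}(x,y)>K$ then every geodesic $[x,y]$ passes through $c(gP)$. Finally one should check that $\pi_{gP}(x)$ computed along $[x,c(gP)]$ agrees coarsely with the one used in defining $d_{gP}$ — this is immediate since any two geodesics from $x$ to $c(gP)$ have endpoints adjacent to $c(gP)$ that are within bounded $d_S$-distance by BCP part (1).

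\textbf{The main obstacle.} The delicate point is Step 1: manufacturing a genuine quasi-geodesic through $c(gP)$ out of two geodesics. The naive concatenation can fail badly to be a quasi-geodesic, and one must argue — via the thin-triangle inequality, taking $c(gP)$ (or a point near it) as the apex and discarding the overlap — that a uniformly-quasi-geodesic, backtracking-free representative still retaining $c(gP)$ exists, unless $c(gP)$ is already within $O(\delta)$ of $[x,y]$, in which case a separate (easy) bounded-diameter argument for $\pi_{gP}$ applies. Once this reduction is in place, the BCP property does all the remaining work. I would also remark that this lemma is implicitly the ``$\hat K$-forcing'' statement (compare Lemma \ref{ForcingLem} and Lemma \ref{lem: guard in curve graph}): cone points with large $d_{gP}$-value are \emph{guards} in the sense of Definition \ref{defn: guardian}, which is how it will be used later.
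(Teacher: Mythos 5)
Your overall strategy (contrapositive, hyperbolicity plus BCP) matches the paper's, and you correctly identify Step 1 as the delicate point, but your resolution of it fails in the main case. The concatenation $[x,c(gP)]\cdot[c(gP),y]$ is a uniform quasi-geodesic only when the Gromov product $\langle x,y\rangle_{c(gP)}$ is $O(\delta)$, i.e.\ when $c(gP)$ lies within $O(\delta)$ of the geodesic $[x,y]$. When $\langle x,y\rangle_{c(gP)}$ is large, \emph{no} path from $x$ to $y$ through $c(gP)$ can be a $c$-quasi-geodesic for a uniform $c$ (its length is at least $d(x,y)+2\langle x,y\rangle_{c(gP)}$, and the ratio $\langle x,y\rangle_{c(gP)}/d(x,y)$ is unbounded over the relevant configurations), and the trimming you describe necessarily deletes $c(gP)$. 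Your parenthetical fallback --- ``if the trimming removes $c(gP)$ entirely, then $d(x,c(gP))$ was already $O(\delta)$'' --- is exactly backwards: the trimming removes the apex precisely when $\langle x,y\rangle_{c(gP)}$ exceeds $O(\delta)$, which forces $d(x,c(gP))\ge\langle x,y\rangle_{c(gP)}$ to be \emph{large}. So the case where the cone point is far from $[x,y]$ is not covered; this is not a removable technicality, since in that regime the hypothesis that $[x,y]$ misses $c(gP)$ carries essentially no information and BCP(2) against $[x,y]$ cannot be the right tool.

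The paper closes exactly this gap by making the comparison between the two legs near the apex rather than between the concatenation and $[x,y]$: by thinness of the triangle $\Delta(x,y,c(gP))$ one finds $x'\in[x,c(gP)]$ and $y'\in[y,c(gP)]$ with $d(x',y')\le D(\delta)$; then $[x',y']\cdot[y',c(gP)]$ is automatically a quasi-geodesic from $x'$ to $c(gP)$ with constants depending only on $D$ (a bounded segment prepended to a geodesic), and BCP(1) applied to this path and to the geodesic $[x',c(gP)]\subset[x,c(gP)]$ shows that the two vertices adjacent to $c(gP)$ on the two legs, which represent $\pi_{gP}(x)$ and $\pi_{gP}(y)$, are $d_S$-close. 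Your argument does go through in the sub-case where $c(gP)$ is $O(\delta)$-close to $[x,y]$ (modulo the routine removal of backtracking at \emph{other} cone points, which you gloss over), but to be a proof it must be supplemented by the BCP(1) argument above in the remaining case.
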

\begin{proof}
Assume that some geodesic $[x,y]$ does not contain $c({gP})$.
Using thin-triangle property, we may find vertices $x'\in p$ and $y'\in q$ so that $d(x',y')\le D$ and $d(x',x),d(y',y)>D$ for a constant $D$ depending on $\delta$. If the length of $p$ or $q$ is less than $D$, choose $x'=x$ or $y'=y$ accordingly. The concatenated path $[x',y'][y',c(gP)]$ is a  $D$-quasi-geodesic. Applying BCP(1) to the geodesic $[x',c(gP)]$ and $[x',y'][y',c(gP)]$, we derive $d_{gP}(x,y)\le K:=K(D)$. 
\end{proof}

Consequently, $\pi_{gP}(x)$ has $d_S$-diameter at most $K$, and the map $\pi_{gP}$ is $K$-Lipschitz: $d_{gP}(x,y)\le K d_X(x,y)$.

\begin{lem}\label{lem: no finite horofunction}
Let $b_\xi$ be a horofunction in $\bU_h\setminus \partial_h^\infty X$ so that $\min b_\xi>-\infty$. Then there exists a unique cone point $x\in \mathcal {Cone}(\mathcal P)$ so that the minimum of $b_\xi$ is realized  at $x$. In particular,   $\bU_h= X\cup \partial_h^\infty X$.     
\end{lem}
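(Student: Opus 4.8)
The plan is to analyze the non-empty set $\Pi(\xi)=\{x\in X: b_\xi(x)=\min b_\xi\}$, which has diameter at most $K$ by Lemma \ref{localminimalmap}. Write $b_\xi=\lim_n b_{y_n}$ with vertices $y_n\in X$ and put $m:=\min b_\xi$. I would establish two facts: \emph{(i)} if $\Pi(\xi)$ contains a group element $v\in G$, then $b_\xi=b_v$; and \emph{(ii)} if $\Pi(\xi)$ contains a cone point $c(gP)$, then $b_\xi=b_{c(gP)}$. Since the vertex set of $X$ is the disjoint union of $G$ and $\mathcal{C}(\mathcal P)$ and $\Pi(\xi)\neq\emptyset$, these two facts force $b_\xi=b_x$ for a single vertex $x$; hence $\overline{X}_h\setminus\partial_h^\infty X\subseteq X$, so $\overline{X}_h=X\cup\partial_h^\infty X$, and when $x$ is a cone point it is visibly the unique minimum of $b_x=b_{c(gP)}$.

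For \emph{(i)}: since $b_\xi(v)=m$ and the integers $d(v,y_n)-d(o,y_n)$ converge to $m$, we have $d(v,y_n)=d(o,y_n)+m$ for all large $n$. If $y_n\neq v$ for infinitely many $n$, let $w_n$ be the vertex adjacent to $v$ on a geodesic $[v,y_n]$, so $d(w_n,y_n)=d(v,y_n)-1$. As $v$ has finite degree in $X$, some fixed neighbour $w$ of $v$ equals $w_n$ along a subsequence, along which $d(w,y_n)-d(o,y_n)=m-1$; this forces $b_\xi(w)=m-1<m$, contradicting minimality. Hence $y_n=v$ eventually and $b_\xi=b_v$.

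For \emph{(ii)}, the core of the argument: if $y_n=c(gP)$ infinitely often then $b_\xi=b_{c(gP)}$ at once, so assume $y_n\neq c(gP)$ for all large $n$. Let $h_n$ be the vertex of a geodesic $[y_n,c(gP)]$ adjacent to $c(gP)$; since the only neighbours of $c(gP)$ are the elements of $gP$, we have $h_n\in gP$, $d(h_n,y_n)=d(c(gP),y_n)-1$, and $h_n\in\pi_{gP}(y_n)$. Because $c(gP)\in\Pi(\xi)$ also gives $d(c(gP),y_n)-d(o,y_n)=m$ for large $n$, we get $d(h_n,y_n)-d(o,y_n)=m-1$. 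If the $h_n$ stayed in a bounded $d_S$-ball they would repeat a fixed value $h$ along a subsequence, whence $b_\xi(h)=m-1<m$ — impossible. Thus $d_S(h',h_n)\to\infty$ along a subsequence for every fixed group element $h'$. Now fix any vertex $z$ and let $h'$ be the vertex of a geodesic $[z,c(gP)]$ adjacent to $c(gP)$ (with $h'=z$ when $z\in gP$, the case $z=c(gP)$ being trivial); then $d_{gP}(z,y_n)\ge d_S(h',h_n)\to\infty$, so by the BCP property (Lemma \ref{lem: BCP property}) every geodesic $[z,y_n]$ passes through $c(gP)$ for large $n$. Consequently $d(z,y_n)=d(z,c(gP))+d(c(gP),y_n)$, so $b_\xi(z)=d(z,c(gP))+m$; taking $z=o$ yields $m=-d(o,c(gP))$, whence $b_\xi(z)=d(z,c(gP))-d(o,c(gP))=b_{c(gP)}(z)$ for all $z$, i.e. $b_\xi=b_{c(gP)}$.

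The hard part is step \emph{(ii)}: the decisive point is that the ``entry vertices'' $h_n\in gP$ of geodesics running into $c(gP)$ must escape to infinity in the word metric $d_S$ — this uses genuinely that the minimum of $b_\xi$ is attained, so that no fixed nearby vertex can undercut it — after which the BCP property funnels every geodesic $[z,y_n]$ through the cone point and pins down $b_\xi$ completely. Step \emph{(i)} is routine via local finiteness at group vertices, and the global identity $\overline{X}_h=X\cup\partial_h^\infty X$ then drops out by combining \emph{(i)} and \emph{(ii)} with Remark \ref{rem: finite horofunctions}.
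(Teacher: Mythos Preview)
Your proof is correct and follows essentially the same route as the paper's: rule out a group-element minimizer via finite valence, and at a cone point $c(gP)$ show the entry vertices $h_n\in gP$ escape in $d_S$, then invoke BCP (Lemma~\ref{lem: BCP property}) to funnel every geodesic $[z,y_n]$ through $c(gP)$, forcing $b_\xi=b_{c(gP)}$. Your write-up is in fact a bit more careful than the paper's, which frames the argument as proving uniqueness of the minimizer and leaves the degenerate cases $y_n=v$ and $y_n=c(gP)$ implicit.
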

\begin{proof}
We only need show that the minimum of $b_\xi$ is realized  at a unique point $x$. This implies $b_\xi=b_x$ and thus the ``in particular" statement follows.

Now, let $x,y\in X$ so that $b_\xi(x)=\min b_\xi=b_\xi(y)$. For concreteness, assume that $d(o,x)\le d(o,y)$. Let $z_n\in X$ so that $b_{z_n}\to b_\xi$.  We first claim that $x$ must be  a cone point. Indeed, if $x$ is an element in $G$, then it has finite valence. Then $[x,z_n]$ will contain a common vertex $w$ adjacent to $x$ for all  $n\gg 1$, so $d(z_n,w)-d(z_n,o)=d(z_n,x)-d(z_n,o)-1$. This shows that $b_\xi(w)=b_\xi(x)-1$, contradicting the minimum assumption.
Thus,  $x$ is a cone point associated to a coset $gP$.  

Next observe that  $\{[z_n,x]\cap gP:n\ge 1\}$ contains infinitely many vertices. Indeed, if not, by taking a further subsequence, we may assume that for every $n\ge 1$, $[z_n,x]$ contains a common vertex $v\in gP$; that is, $d(z_n,v)+1=d(z_n,x)$. By computation, $b_{z_n}(v)=d(z_n,v)-d(z_n,o)=b_{z_n}(x)-1$, contradicting to $b_\xi(x)=\min b_\xi$. By Lemma \ref{lem: BCP property}, given any $y$, we have $[z_n,y]$ has to pass through $x$. This shows that $b_\xi(y)=b_\xi(x)+1$. Again a contradiction with the minimum. 
\end{proof}

The following is analogous to Lemma \ref{lem: exist K local large sequence}.
\begin{lem}\label{lem: cone points are not isolated}
The cone points in  $\mathcal {C}(\mathcal P)$ are accumulation points of $\partial_h^\infty X$.    
\end{lem}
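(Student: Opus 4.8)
The plan is to mimic the proof of Lemma~\ref{lem: non-separating curve} for curve graphs, using the partial pseudo-Anosov replaced by a suitable \emph{peripheral} loxodromic element and using Lemma~\ref{lem: BCP property} in place of Lemma~\ref{lem: Masur Minsky BGI}. Fix a cone point $x=c(gP)\in \mathcal C(\mathcal P)$ associated to a coset $gP$; we want to produce a sequence of boundary points $\xi_n\in\partial_h^\infty X$ with $\xi_n\to x$ in $\bU_h$. First I would pick an element $h\in G$ that stabilizes the coset $gP$ and acts with unbounded orbits on $gP$ in the word metric $d_S$ (for instance a conjugate $ghg^{-1}$ with $h$ of infinite order in $P$, which exists since the peripheral subgroups are infinite whenever relative hyperbolicity is non-degenerate); note $h$ fixes the cone point $x$. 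Fix a base vertex $o\in X$ and a geodesic ray $p$ starting at $x$ and ending at some $\xi\in\partial_h^\infty X$ (such a ray exists because $X$ has infinite diameter and is hyperbolic; alternatively one may take $\xi$ to be the fixed point of a loxodromic element whose axis leaves $x$, which lies in $\partial_h^\infty X$ by Lemma~\ref{localminimalmap}). Let $u$ be the vertex of $p$ adjacent to $x$, so $u\in gP$. I then claim $\xi_n:=h^n\xi\to x$.

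To verify the claim, I would use the criterion of Lemma~\ref{lem: accumulation point criterion}: it suffices to show that for any vertex $y\in X$, the cone point $x$ is a guard between $y$ and $\xi_n$ for all large $n$, i.e. every geodesic from $y$ to (any vertex approximating) $\xi_n$ passes through $x$. Since $h$ acts on $gP$ with unbounded $d_S$-orbits, $d_{gP}(u,h^nu)=d_S(u,h^nu)\to\infty$ as $n\to\infty$; because $h$ fixes $x$, the ray $h^np$ starts at $x$, runs to $h^nu\in gP$, and ends at $\xi_n$. Given $y$, consider the concatenation of a geodesic $[y,x]$ with the ray $h^np$; after adjusting the end of $[y,x]$ by a bounded amount to land on a vertex of $gP$ whose $d_S$-distance from $u$ is bounded (uniformly in $n$), and using the $K$-Lipschitz property of $\pi_{gP}$ together with the triangle inequality, we get $d_{gP}(y, z)>K$ for any vertex $z$ far out on $h^np$, once $n$ is large. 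Lemma~\ref{lem: BCP property} then forces every geodesic from $y$ to such $z$ to pass through $x=c(gP)$. Taking $z\to\xi_n$ and invoking Lemma~\ref{lem: accumulation point criterion} gives $\xi_n\to x$. Since $\xi_n\in\partial_h^\infty X$ (as translates of $\xi\in\partial_h^\infty X$ under the isometry $h^n$, and $\partial_h^\infty X$ is $G$-invariant), $x$ is an accumulation point of $\partial_h^\infty X$, as desired.

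The main obstacle I anticipate is the bookkeeping around the concatenated path $[y,x]\cup h^np$: I must ensure that the hypotheses of the BCP form (Lemma~\ref{lem: BCP property}) are genuinely met, which requires that the geodesic $[y,x]$ and the ray $h^np$ both end at $x=c(gP)$ and that the $\pi_{gP}$-projections of their far endpoints are $d_S$-far apart. The subtlety is that $[y,x]$ enters $x$ through \emph{some} vertex of $gP$, and I only control that vertex up to bounded $d_S$-distance from a fixed reference (this is where a uniform Lipschitz/thin-triangle estimate is needed, exactly as in the last paragraph of the proof of Lemma~\ref{lem: guard in curve graph}); combined with $d_{gP}(u,h^nu)\to\infty$ this still yields $d_{gP}(p_-,q_-)>K$ for the two sub-geodesics ending at $x$, so BCP(1) applies. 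A secondary point to check is the existence of the element $h$: one needs the peripheral subgroups to be infinite, which is part of the standing ``non-elementary'' hypothesis on $(G,\mathcal P)$; if a peripheral subgroup were finite it could be removed from $\mathcal P$ without changing relative hyperbolicity, so we may assume each $P$ is infinite. With these checks in place the argument is a routine adaptation of Lemma~\ref{lem: non-separating curve}.
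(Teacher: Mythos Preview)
Your approach is essentially the same as the paper's: pick a ray from the cone point to some horofunction $\xi$, translate it by elements of the peripheral coset stabilizer, and use Lemma~\ref{lem: BCP property} together with Lemma~\ref{lem: accumulation point criterion} to show the translated endpoints accumulate on the cone point. The one substantive difference is that the paper takes an arbitrary unbounded sequence $p_n\in P$ and sets $\xi_n:=gp_ng^{-1}\xi$, whereas you take powers $h^n$ of a single infinite-order element of $P$. Your version therefore needs $P$ to contain an element of infinite order, which is not guaranteed (a peripheral subgroup could be an infinite torsion group); the paper's version only needs $P$ infinite. This is a minor gap, easily repaired by replacing $h^n$ with any sequence $p_n\in P$ with $d_S(1,p_n)\to\infty$, and nothing else in your argument changes.
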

\begin{proof}
Let $v=c(gP)$ be a cone point associated to the coset $gP$.  We are going to find a sequence of boundary points $\xi_n$ so that $\xi_n\to v$.  First, choose a geodesic ray $\gamma$ starting at $v$, which ends at a point $\xi$ in  the horofunction boundary  $\partial_h X$.  Then, let us pick a unbounded sequence of elements $p_n\in P$, so $\xi_n:=gp_ng^{-1} \xi$ is the end point of $gp_ng^{-1}\gamma$. We claim that $\xi_n\to v$. Indeed, for any vertex $w$ in $X$, let $u\in gP$ be the vertex on $[w,v]$ (which is adjacent to the cone point $v$). Since $gPg^{-1}$ acts properly on $gP$ in word metric $d_S$, we see that $d_{gP}(o,gp_n)\to \infty$ and $d_{gP}(u,gp_n)\to \infty$.  By Lemma \ref{lem: BCP property},  for all large enough $n\gg 1$, $v$ is a guard from $w$ to $\xi_n$.  The conclusion follows by Lemma \ref{lem: accumulation point criterion}.     
\end{proof}

The \textit{Bowditch boundary} denoted as $\partial_B G$ is  the union of cone points  and the Gromov boundary of $X$, equipped with the topology defined in \cite{Bow1}. That is, $\partial_B G=\mathcal {C}(\mathcal P)\cup \partial X$.

\begin{thm}\label{thm: coned graph boundary}
There is a unique $G$-invariant subset denoted as $\partial G$ in $\bU_h$ so that $\partial G\setminus \mathcal C(\mathcal P)$ is contained in $\partial_h^\infty X$. Moreover, the identity map on $\mathcal C(\mathcal P)$ extends to be a continuous map from $\partial G$  to  $\partial_B G$, which coincides with the local minimum map from $\partial G\setminus \mathcal C(\mathcal P)$ to the Gromov boundary $\partial X$.  
\end{thm}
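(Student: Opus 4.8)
The plan is to mirror, in the relatively hyperbolic setting, the construction carried out for the projection complex in Section~\ref{SSecProjectionComplex}, using the coset projections $\pi_{gP}$ and the BCP property (Lemma~\ref{lem: BCP property}) as the substitute for the projection-complex lemmas (Lemmas~\ref{ForcingLem} and \ref{lem: BGIT}). First I would establish the analogue of Proposition~\ref{prop: minimal loxodromic elements}: there exist loxodromic isometries $g\in G$ on $X=\hat\Gamma(G,S)$ whose fixed points $g^\pm$ are minimal in $\partial_h X$. To see this, take $g\in G$ a non-peripheral element; by the Extension Lemma one arranges (after replacing $g$ by $gf^n$ with $f\in F$, $n\gg 0$) that the quasi-axis $\ax(g)$ forms a bi-infinite quasi-geodesic in $X$ with no backtracking, and along it one has cone points $v_i$ (or coset vertices) with $d_{v_i}(\cdot,\cdot)$ arbitrarily large. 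Then Lemma~\ref{lem: BCP property} forces each such $v_i$ to be a guard between faraway vertices of $X$ and any vertex approximating a point of $\Pi^{-1}(g^+)$; Lemma~\ref{lem: minimal class criterion} then shows $[g^+]$ is a singleton, and similarly for $g^-$. Having this, I would define $\partial G$ to be the closure in $\overline X_h$ of $\{hg^\pm : h\in G\}$ over all such $g$ — equivalently of $\{hg^\pm\}$ for one fixed $g$ — and note it is compact, $G$-invariant, and (by the north--south dynamics argument of Lemma~\ref{lem: minimal loxodromic with ns dynamics}) the unique $G$-minimal closed subset, exactly as in the proof of Theorem~\ref{MainThm}.

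Next I would identify $\partial G\cap X$ and $\partial G\setminus\mathcal C(\mathcal P)$. By Lemma~\ref{lem: no finite horofunction} we already know $\overline X_h = X\cup\partial_h^\infty X$, so $\overline X_h\setminus\partial_h^\infty X = X$, and the only vertices of $X$ that are non-isolated in $\overline X_h$ are the cone points, since each group element $h\in G$ has finite valence and hence $b_h$ is isolated (this was observed inside the proof of Lemma~\ref{lem: no finite horofunction}). Combining this with Lemma~\ref{lem: cone points are not isolated}, which says every cone point is an accumulation point of $\partial_h^\infty X$, one concludes that the unique minimal closed set $\partial G$ contains every cone point and contains no element of $G$; thus $\partial G\cap X = \mathcal C(\mathcal P)$ and $\partial G\setminus\mathcal C(\mathcal P)\subseteq\partial_h^\infty X$. (One should check $\partial G\supseteq\mathcal C(\mathcal P)$: any minimal closed $G$-invariant set must meet, hence contain, the $G$-orbit of cone points, because that orbit accumulates onto points of $\partial G$ via Lemma~\ref{lem: cone points are not isolated} and $\partial G$ is closed; conversely minimality and $G$-transitivity on cosets give all cone points.)

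For the ``moreover'' part I would build the extension map $\partial G\to\partial_B G$ by gluing the identity on $\mathcal C(\mathcal P)$ to the restriction of the local minimum map $\Pi$ (Lemma~\ref{localminimalmap}) on $\partial G\setminus\mathcal C(\mathcal P)\subseteq\partial_h^\infty X$, which lands in $\partial X$. On $\partial_h^\infty X$, $\Pi$ is already continuous and $G$-equivariant by Lemma~\ref{localminimalmap}(1), and it is bijective onto $\Lambda G\subseteq\partial X$ after passing to $[\cdot]$-classes — but here every point of $\partial G\setminus\mathcal C(\mathcal P)$ is minimal, because it lies in the closure of the orbit of minimal fixed points and one checks (using BCP and the guard criterion of Lemma~\ref{lem: accumulation point criterion}, exactly as in the Claim inside Lemma~\ref{lem: minimal loxodromic with ns dynamics}) that a limit of minimal classes along a guarded sequence is again minimal — so $\Pi$ restricted to $\partial G\setminus\mathcal C(\mathcal P)$ is injective. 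It remains to verify continuity of the combined map \emph{at the cone points}: if $\xi_n\to c(gP)$ in $\overline X_h$ with $\xi_n\in\partial G\setminus\mathcal C(\mathcal P)$, one must show $\Pi(\xi_n)\to c(gP)$ in $\partial_B G$, i.e.\ that quasi-geodesic rays representing $\Pi(\xi_n)$ eventually enter every horoball $gP$-neighbourhood. This follows because $\xi_n\to c(gP)$ forces, for each fixed vertex $w$, the cone point $c(gP)$ to be a guard between $w$ and $\xi_n$ for $n\gg0$ (the contrapositive of Lemma~\ref{lem: accumulation point criterion}), hence a geodesic from $w$ to a vertex approximating $\xi_n$ passes through $c(gP)$; feeding this into the definition of the Bowditch topology at a parabolic point gives $\Pi(\xi_n)\to c(gP)$. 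Continuity at points of $\partial X$ is immediate from Lemma~\ref{localminimalmap}(1), and continuity at a cone point $c(gP)$ approached by \emph{other} cone points is handled the same way via guards.

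\textbf{Main obstacle.} The delicate step is the continuity of the extension map at the cone points, i.e.\ matching the horofunction topology on $\partial G$ near $c(gP)$ with the Bowditch topology near the corresponding parabolic point. This requires translating ``$\xi_n\to c(gP)$ in $\overline X_h$'' into a statement about geodesics in $X$ penetrating the coset $gP$ deeply, which is precisely where the BCP property (Lemma~\ref{lem: BCP property}) and the guard criterion (Lemma~\ref{lem: accumulation point criterion}) must be deployed carefully; one also has to know that the sequences $z_n^{(k)}\to\xi_n$ approximating distinct $\xi_n$ can be diagonalised so that the guards $c(gP)$ appear uniformly, which is a routine but slightly fussy compactness/diagonal argument.
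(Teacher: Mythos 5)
Your proposal follows essentially the same route as the paper's proof: construct loxodromic isometries on $X=\hat\Gamma(G,S)$ with minimal fixed points by running the argument of Proposition \ref{prop: minimal loxodromic elements} with the BCP property supplying the guards, take $\partial G$ to be the closure of the $G$-orbit of their fixed points (the unique minimal closed invariant set by north--south dynamics), and invoke Lemma \ref{lem: no finite horofunction} for the containment of $\partial G\setminus\mathcal C(\mathcal P)$ in $\partial_h^\infty X$. If anything you give more detail than the paper, which leaves the continuity of the extension map ``as an exercise''; your sketch of that step is sound in outline, though the appeal to the ``contrapositive'' of Lemma \ref{lem: accumulation point criterion} should really be a direct argument from pointwise convergence of horofunctions (namely that $b_{\xi_n}(u)-b_{\xi_n}(c(gP))\to 1$ for every $u\in gP$ forces the entry points $\pi_{gP}$ of approximating geodesics to escape to infinity, giving convergence to the parabolic point in the Bowditch topology).
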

\begin{proof}
By a similar argument as in Lemma \ref{prop: minimal loxodromic elements}, there are loxodromic elements $f$ with minimal fixed points in $\partial_h X$. Let $\partial G$ be the closure of  $\{gf^\pm: g\in G\}$ in the horofunction compactification $\bU_h$.   It is the minimal closed subset by Lemma \ref{limitsetbigtwo}. By Lemma \ref{lem: no finite horofunction}, we have $\partial G\setminus \mathcal C(\mathcal P)\subset \partial_h^\infty X$. The map in the ``moreover" statement is naturally defined and its continuity  is left as an exercise.
\end{proof}


\section{More $C^\ast$-algebra applications}
In this section, we give a concise and elementary proof of   the existence of free semigroup property, $P_{nai}$ property, and paradoxical towers in acylindrically hyperbolic groups. The proof uses only  the materials in \textsection \ref{sec: limit set and Myrberg} and may be read independently.   

Assume that a group $G$ acts non-elementarily on a hyperbolic space $X$. Let $$E(G)=\{g\in G: g\xi=\xi, \forall \xi\in \Lambda G\}$$ denote the set of elements which fix $\Lambda G$ pointwise. By definition, $E(G)$ is the kernel of the action of $G$ on $\Lambda G$, so $G/E(G)$ acts faithfully on $\Lambda G$. If the action is acylindrical, then $E(G)$ is the finite kernel studied in \cite{DGO}.
 
\subsection{Free semigroup property and  $P_{nai}$ property}

We start with some definitions.

The following definition shall play a crucial role in Claim \ref{claim: find f}. 
\begin{defn}\label{defn: tamed action}
A non-elementary action $G\act X$ is called \textit{tamed} if it contains a \textit{tamed}  pair of weakly independent loxodromic isometries $b,c\in G$ with the following property. 
Let $N(b,c):=\langle\langle b,c\rangle\rangle$ denote the normal closure  in $G$. Then either $g^\pm\cap f^\pm=\emptyset$ or $g^\pm= f^\pm$ for any two loxodromic isometries $g,f\in N(a,b)$.  

In particular, $\{gb^\pm: g\in G\}$ and $\{gc^\pm: g\in G\}$ are disjoint.
\end{defn}
\begin{rem}
The non-elementary acylindrically hyperbolic action $G\act X$ is tamed. Indeed,  by   \cite[Theorem 6.8]{DGO} (recalled in Lemma \ref{lem: acylin independent}), any pair $(b,c)$ of independent loxodromic isometries, $\{gb^\pm: g\in G\}$ and $\{gc^\pm: g\in G\}$ are disjoint. Thus, every pair of non-commensurable loxodromic isometries   are tamed in $G$.
\end{rem}

The following is the main result of this subsection.

\begin{prop} \label{prop: free semgigroups}
Assume that the non-elementary action $G\act X$ is tamed. Let $A$ be a finite set of non-trivial elements in $G$ so that $A\cap E(G)=\emptyset
$.  Then 
\begin{enumerate}[label=(\roman*)]
    \item
    There exists $f\in G$ so that $A f$ generates a free semi-group.
    
    \item
    There exists $f\in G$ so that $\langle a, f\rangle$ is a free group of rank 2 for each $a\in A$.
\end{enumerate}
\end{prop}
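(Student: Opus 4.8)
The plan is to use the Extension Lemma (Lemma \ref{extend3}) together with the ``ping-pong'' dynamics on the limit set $\Lambda G$ in order to produce, from a suitable loxodromic element $f$, the required free semigroup (part (i)) and free group (part (ii)). Since $A\cap E(G)=\emptyset$, every $a\in A$ acts nontrivially on $\Lambda G$; using Lemma \ref{limitsetbigtwo}(ii) and the fact that $G$ is non-elementary, I would first fix a pair $b,c$ of loxodromic isometries witnessing the tamed hypothesis, and then, by conjugating inside the tamed normal closure $N(b,c)$ and using Lemma \ref{lem: double dense elements}, produce a loxodromic element $f\in N(b,c)$ whose fixed-point pair $f^\pm$ is disjoint from $a^{\pm}$, from $af^\pm$, and from $a^{-1}f^\pm$ for every $a\in A$ simultaneously (a finite condition, achievable since there are infinitely many weakly independent loxodromic elements in $N(b,c)$ by the argument in the proof of Lemma \ref{limitsetbigtwo}). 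The point of working inside the tamed normal closure is that it keeps the combinatorics of fixed points under control: any two loxodromic elements in $N(b,c)$ either share both fixed points or have disjoint fixed-point sets.

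For part (i), after replacing $f$ by a large power I would show that $Af$ generates a free semigroup by a ping-pong argument on $\Lambda G$ relative to north--south dynamics of $f$ (Lemma \ref{loxo North-South dynamics}): choose small disjoint neighborhoods $U^\pm$ of $f^\pm$, pass to a power $f^n$ so that $f^n(\Lambda G\setminus U^-)\subset U^+$, and observe that for each $a\in A$ the set $a f^n (\Lambda G\setminus U^-)\subset a U^+$ lands in a region disjoint from $U^-$ and, after possibly enlarging $n$, inside $\Lambda G\setminus U^-$; this makes any nonempty positive word in the $af^n$ map $\Lambda G\setminus U^-$ into itself nontrivially, so no such word is the identity. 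Here one uses that $af^+\ne f^-$ for all $a$ and that $A$ is finite to pick a uniform $n$. The key step is arranging the disjointness conditions on the fixed points; the ping-pong itself is then routine.

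For part (ii), I would additionally need the inverses $f^{-n}a^{-1}$ to ping-pong, i.e. a two-sided ping-pong yielding that $\langle a,f^n\rangle$ is free of rank $2$ for each $a\in A$. If $a$ is loxodromic this follows from the standard free-group criterion for two weakly independent loxodromic isometries (take $n$ large, using $af^\pm\cap f^\pm=\emptyset$). The delicate case is when $a$ is elliptic: here I would invoke the bounded axis property for elliptic isometries to guarantee that the family $\{a^k \ax(f): k\in\mathbb Z\}$ has uniformly bounded projection to $\ax(f)$, which lets me run a ping-pong on $X$ (or on $\Lambda G$) along the axis $\ax(f^n)$ with the ``infinite-order'' coordinate carried by $f^n$ and the finite-order coordinate carried by $a$; concatenating alternating syllables $a^{k}f^{\pm n}$ traces a quasigeodesic by a local-to-global argument (as in Lemma \ref{extend3}), hence no reduced word is trivial, so $\langle a,f^n\rangle\cong \mathbb Z/|a| * \mathbb Z$ is free of rank $2$ in the sense that it is a nontrivial free product — and in particular contains a free group of rank $2$. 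I expect the main obstacle to be the elliptic case: making the ping-pong on $X$ (rather than on the non-locally-compact boundary) precise, and in particular showing that alternating words $a^{k_1}f^{n}a^{k_2}f^{-n}\cdots$ label quasigeodesics uniformly in the exponents $k_i$, which is exactly where the bounded axis property is needed and where one must be careful that the ``turns'' at the elliptic syllables do not allow backtracking.
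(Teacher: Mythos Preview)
Your overall strategy is sound and close to the paper's, but there is a genuine gap in part (i). Showing that every nonempty positive word in the generators $af^n$ acts nontrivially on $\Lambda G\setminus U^-$ only proves that no such word equals the identity; it does \emph{not} show that distinct positive words give distinct group elements, which is what ``free semigroup'' means (for instance, $\{g,g^2\}$ for $g$ loxodromic satisfies your conclusion but is not free). A correct semigroup ping-pong requires pairwise disjoint attracting sets: you need the neighborhoods $aU^+$ for $a\in A$ to be pairwise disjoint, i.e.\ $af^+\neq a'f^+$ for all distinct $a,a'\in A$, equivalently $(a^{-1}a')f^+\neq f^+$. Your list of disjointness conditions on $f^\pm$ (disjoint from $a^\pm$, $af^\pm$, $a^{-1}f^\pm$) does not include this. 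The paper handles exactly this point by applying the fixed-point separation step not to $A$ but to $\tilde A=\{a^{-1}a':a\neq a'\in A\}$; once that is in place, either your boundary ping-pong (with attracting sets $aU^+$) or the paper's quasi-geodesic argument in $X$ goes through.

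On the method: the paper does \emph{not} use boundary ping-pong. It works entirely in $X$ via bounded projections to $\ax(f)$: after arranging $af^\pm\cap f^\pm=\emptyset$ for $a\in\tilde A$ (Claim \ref{claim: find f}) and the bounded-projection estimates (Claim \ref{claim: bdd proj}), a high power of $f$ forces any word $a_1fa_2f\cdots a_mf$ to label a uniform quasi-geodesic, and equality of two such words is ruled out by comparing the initial segments. Your approach via north--south dynamics on $\Lambda G$ is a legitimate alternative and arguably more transparent for (i), but it needs the missing $\tilde A$ condition to separate the attracting regions.

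For (ii) your outline is essentially the paper's: bounded projections plus local-to-global give that alternating words $a^{k_1}f^{\pm n}a^{k_2}\cdots$ label quasi-geodesics, hence no reduced word is trivial. Two small corrections: first, ``elliptic'' on $X$ means bounded orbit, not finite order in $G$, so writing $\mathbb Z/|a|*\mathbb Z$ is incorrect---when $a$ has infinite order this gives $\langle a\rangle*\langle f\rangle\cong F_2$ as stated; second, you should also treat the parabolic case (the paper does, using that $\langle a\rangle o$ is unbounded but has bounded projection to $\ax(f)$ since the fixed points are disjoint).
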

\begin{rem}
The element $f$ in the two items may not be the same. The asserted conclusions in (i) and (ii) are referred in literature as free semigroup property and $P_{nai}$ property.    In acylindrically hyperbolic groups,  \textit{(i)} is proved in \cite[Proposition 4.1(a)]{GO20}  and \textit{(ii)} is proved in \cite[Theorem 0.1]{GO20}.   
\end{rem}
  
\begin{proof}
We first start with two ingredients which use the tamed action in an essential way. 
\begin{claim}\label{claim: find f}
There is a loxodromic isometry $f\in G$ so that $af^\pm\cap f^\pm=\emptyset$ for each $a\in A$.     
\end{claim}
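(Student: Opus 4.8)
The plan is to realize $f$ as a loxodromic isometry lying in the tamed normal subgroup $N:=N(b,c)$, chosen so that its fixed-point pair $\{f^-,f^+\}$ is preserved by no $a\in A$; the tamed property of $N$ then upgrades ``$a$ does not preserve $\{f^-,f^+\}$'' to ``$af^\pm\cap f^\pm=\emptyset$'' automatically, since $afa^{-1}\in N$ is loxodromic and two loxodromics of $N$ share both fixed points or none. First I would record the soft facts: $N$ contains the weakly independent loxodromics $b,c$, so $N\act X$ is non-elementary, and $\Lambda N=\Lambda G$ (limit set of an infinite normal subgroup); hence Lemma \ref{limitsetbigtwo}(ii) applied to $N$ gives that the fixed-point pairs of loxodromic isometries of $N$ form a \emph{dense} subset $\mathcal E\subseteq\Lambda G\times\Lambda G$. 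I would also note that $\Lambda G$ is perfect: its set of isolated points is open and $G$-invariant, so by minimality (Lemma \ref{limitsetbigtwo}(i)) it is empty or all of $\Lambda G$, and it cannot be all of $\Lambda G$ because the Myrberg limit set is $G$-invariant, dense, and uncountable (Lemma \ref{lem: uncountable Myrberg}).

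Next, for $a\in A$ set $\mathrm{Fix}(a)=\{\xi\in\Lambda G:a\xi=\xi\}$ and consider the closed ``bad set''
$$Z_a=\{(\alpha,\beta)\in\Lambda G\times\Lambda G:\{a\alpha,a\beta\}=\{\alpha,\beta\}\}=\big(\mathrm{Fix}(a)\times\mathrm{Fix}(a)\big)\cup\{(\alpha,a\alpha):a^2\alpha=\alpha\}.$$
If $f\in N$ is loxodromic with $(f^-,f^+)\notin\bigcup_{a\in A}Z_a$, then for each $a$ the pairs $af^\pm$ and $f^\pm$ are unequal, hence disjoint by tamed-ness, which is exactly the claim. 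So it suffices to prove that $\bigcup_{a\in A}Z_a$ has empty interior in $\Lambda G\times\Lambda G$: its complement is then open and dense, hence meets the dense set $\mathcal E$, producing the desired $f$. The graph pieces $\{(\alpha,a\alpha):a^2\alpha=\alpha\}$ have empty interior because $\Lambda G$ is perfect (an open rectangle inside such a graph would force a one-point open set). The products $\mathrm{Fix}(a)\times\mathrm{Fix}(a)$ have empty interior if and only if $\mathrm{Fix}(a)$ has empty interior in $\Lambda G$.

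The main obstacle is thus to show $\mathrm{Fix}(a)$ has empty interior for every $a\in A$: we only know $a\notin E(G)$, i.e. $\mathrm{Fix}(a)\neq\Lambda G$, whereas a priori $\mathrm{Fix}(a)$ might still contain a non-empty relatively open set $V$. I would argue by contradiction: by density of $\mathcal E$ one can pick weakly independent loxodromics $h,k\in N$ with all four endpoints $h^\pm,k^\pm$ in $V$, so $a$ fixes those four points, and, varying the choice, $a$ fixes a dense set of endpoints of $N$-loxodromic axes inside $V$ (together with those of the loxodromics $h^nk^n$, $n\gg1$, from Lemma \ref{lem: double dense elements}). The remaining step — and the place where the real work lies — is to extract from the tamed structure of $N$ the acylindrical-type statement that an element fixing the endpoints of enough pairwise weakly independent loxodromic axes of $N$ must act trivially on $\Lambda N=\Lambda G$, i.e. lie in $E(G)$, contradicting $a\in A$. (In the acylindrically hyperbolic case this finiteness is immediate, since there $\mathrm{Fix}(a)$ is already small for $a\notin E(G)$; the rest of the argument is soft topology: density, perfectness, and Baire category on a finite union of closed nowhere-dense sets.)
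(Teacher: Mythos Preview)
Your overall strategy---show that the closed ``bad set'' $\bigcup_{a\in A}Z_a$ is nowhere dense in $\Lambda G\times\Lambda G$ and then use density of the $N$-loxodromic fixed-point pairs $\mathcal E$ to find $f$---is a genuine alternative to the paper's proof. The paper instead builds $f$ constructively: for a single $a$ it finds (via Lemma~\ref{limitsetbigtwo}) conjugates $h_1,h_2$ of $b,c$ with both fixed points inside a small open $U$ satisfying $aU\cap U=\emptyset$, then uses Lemma~\ref{lem: double dense elements} to produce $g_n=h_2^nh_1^n\in N(b,c)$ whose fixed points stay in $U$ and hence avoid their $a$-images; it then iterates over the finitely many $a\in A$. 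Your route is cleaner conceptually (no iteration, no explicit products), while the paper's route yields an explicit $f$ and avoids any Baire-flavored detour.

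However, the proof you actually submitted has a real gap at the one step you flag: showing $\mathrm{Fix}(a)$ has empty interior. You propose to pick loxodromics $h,k\in N$ with \emph{all four} endpoints in the hypothetical open set $V\subset\mathrm{Fix}(a)$ and then invoke some unstated ``acylindrical-type'' rigidity to force $a\in E(G)$. That direction is both vague and unnecessary: the tamed hypothesis is not known to give such rigidity, and you never use the point $\eta\notin\mathrm{Fix}(a)$ that exists precisely because $a\notin E(G)$. The correct move is the opposite one: use the density of $\mathcal E$ in $\Lambda G\times\Lambda G$ (not just in $V\times V$) to choose $h\in N$ loxodromic with $h^+\in V$ and $h^-$ in a small neighborhood of $\eta$ moved off itself by $a$. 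Then $ah^+=h^+$ while $ah^-\neq h^-$ (and $ah^-\neq h^+$, since $a$ is injective and $ah^+=h^+$), so $aha^{-1}\in N$ shares exactly one fixed point with $h$. This directly contradicts the tamed property of $N$, finishing the step in one line. With this correction your argument goes through; as written, the key step is not established.
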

\begin{proof}[Proof of the Claim]
Note that $\Lambda G=\overline{\{gh^\pm: g\in G\}}$ for any loxodromic isometry $h\in G$. Let $(b,c)$ be a tamed pair of weakly independent loxodromic isometries in Definition \ref{defn: tamed action}.   Since $A$ is disjoint with $E(G)$, given $a\in A$,  there are conjugates $h_1, h_2\in G$ of $b,c$   so that   $ah_1^\pm\cap h_1^\pm=\emptyset$ and $ah_2^\pm\cap h_2^\pm=\emptyset$.  By Lemma \ref{lem: double dense elements}, $g_n:=h_2^nh_1^n$ for $n\gg 0$ are  loxodromic isometries  whose fixed points tend to $h_2^+$ and $h_1^-$. Noting that  $g_n$ are contained in the normal closure $N(b,c)$,  we see that $ag_n^\pm\cap g_n^\pm=\emptyset$ for all large enough $n\gg 0$.   Since $A$ is a finite set, a repeated argument as above produces a desired $f\in N(b,c)$: for any $a\in A$, $af^\pm\cap f^\pm=\emptyset$.  
\end{proof}

\begin{claim}\label{claim: bdd proj}
Let $f$ be given by Claim \ref{claim: find f}. Then there exist a bounded projection constant  $\tau>0$  so that
\begin{enumerate}
    \item 
    $\mathrm{diam}(\pi_{\ax(f)}a\ax(f))\le \tau$ for any $a\in A\cup A^{-1}$.
    \item 
    $\mathrm{diam}(\pi_{\ax(f)}[o,a^no])\le \tau$ for any  $a\in A\cup A^{-1}$ and any $0\ne n\in \mathbb Z$.
\end{enumerate}    
\end{claim}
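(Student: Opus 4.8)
\emph{Strategy.} The plan is to handle items (1) and (2) separately after one common reduction, and to take $\tau$ to be the larger of the two resulting bounds. For the reduction, I first note that the conclusion of Claim \ref{claim: find f} persists under inverses: if $a\in A$ and $a^{-1}f^{\epsilon}=f^{\epsilon'}$ for some signs, then applying $a$ gives $f^{\epsilon}=af^{\epsilon'}$, contradicting Claim \ref{claim: find f}; hence $af^{\pm}\cap f^{\pm}=\emptyset$ for every $a\in A\cup A^{-1}$. Since $A\cup A^{-1}$ is finite it suffices to bound each diameter for a single $a$, uniformly in $n$, and then maximise. Throughout I will use that $o\in[o,fo]\subseteq\ax(f)$, so $\pi_{\ax(f)}(o)$ is within bounded distance of $o$, and that $\ax(f)$, being a quasi-geodesic in the $\delta$-hyperbolic space $X$, is contracting with constants depending only on $\delta$ and the quasi-geodesicity constant of $\ax(f)$.

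For item (1) I would argue that $afa^{-1}$ is loxodromic with quasi-axis $a\,\ax(f)$ and fixed points $af^{\pm}$, so by the reduction it is weakly independent from $f$; as recalled in \textsection\ref{sec: limit set and Myrberg}, the quasi-axes of two weakly independent loxodromic isometries have bounded projection, which bounds $\mathrm{diam}\big(\pi_{\ax(f)}(a\,\ax(f))\big)$ for each such $a$, hence gives a uniform $\tau_1$.

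For item (2) the first move is to pass from the geodesic to its endpoints: by the contracting property of $\ax(f)$ there is $C_0=C_0(\delta,\ax(f))$ with
\[
\mathrm{diam}\big(\pi_{\ax(f)}([o,a^no])\big)\ \le\ d\big(o,\pi_{\ax(f)}(a^no)\big)+C_0 ,
\]
so it is enough to bound $d\big(o,\pi_{\ax(f)}(a^no)\big)$ uniformly over $n\neq 0$. I would then split on the isometry type of $a$. If $a$ is elliptic, the orbit $\{a^no:n\in\mathbb Z\}$ is bounded and there is nothing to prove. If $a$ is loxodromic or parabolic, I would first observe that $a$ fixes no point of $\{f^-,f^+\}$ in $\partial X$ — otherwise $af^{\epsilon}=f^{\epsilon}$ for some $\epsilon$, contradicting Claim \ref{claim: find f} — so $a^no$ converges, as $n\to\pm\infty$, to a fixed point of $a$ distinct from $f^{\pm}$. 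Assume for contradiction that $d\big(o,\pi_{\ax(f)}(a^{n_k}o)\big)\to\infty$ along some sequence; since the projection takes only finitely many values when $|n|$ is bounded, necessarily $|n_k|\to\infty$, and passing to a subsequence we may assume $n_k\to+\infty$ (say) with the projections escaping towards one fixed endpoint of $\ax(f)$, say $f^{+}$. A Gromov-product estimate — using $o\in\ax(f)$ and that $\pi_{\ax(f)}(a^{n_k}o)$ lies within bounded distance of $[o,a^{n_k}o]$ — forces $\langle a^{n_k}o,f^{+}\rangle_o\to\infty$, i.e. $a^{n_k}o\to f^{+}$ in $\partial X$. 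But then $a^{n_k+1}o=a^{n_k}(ao)$ stays within $d(o,ao)$ of $a^{n_k}o$, hence also tends to $f^{+}$, whereas $a^{n_k+1}o=a(a^{n_k}o)\to af^{+}$ by continuity of the homeomorphism $a$ on $\partial X$; uniqueness of limits gives $af^{+}=f^{+}$, contradicting Claim \ref{claim: find f}. Escape towards $f^{-}$ is identical. Thus $\{\pi_{\ax(f)}(a^no):n\neq 0\}$ is bounded, and maximising over $A\cup A^{-1}$ yields $\tau_2$; finally set $\tau:=\max(\tau_1,\tau_2)$.

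The step I expect to be the main obstacle is this last one: because $X$ need not be proper, one cannot extract a convergent subsequence of $(a^no)_n$ from compactness of $X\cup\partial X$, so the implication ``escaping projection $\Rightarrow a^no\to f^{\pm}$'' has to be proved by hand with Gromov products and then played off against the weak independence produced in Claim \ref{claim: find f}. Everything else — the contracting behaviour of $\ax(f)$, the coarse estimate for the projection of a geodesic, and the bounded projection of weakly independent quasi-axes — is the standard Morse geometry of $\delta$-hyperbolic spaces and needs no local compactness.
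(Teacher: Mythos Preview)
Your proof is correct and follows essentially the same architecture as the paper's: item (1) via weak independence of $f$ and $afa^{-1}$, item (2) by splitting on the isometry type of $a$ (bounded orbit in the elliptic case, disjointness of fixed points from $f^{\pm}$ in the loxodromic/parabolic case), then maximising over the finite set $A\cup A^{-1}$. The only difference is in the execution of the loxodromic/parabolic subcase: the paper simply asserts that the orbit $\langle a\rangle o$ is discrete with accumulation only at the fixed points of $a$, hence away from $f^{\pm}$, while you give the explicit contradiction via continuity of $a$ on $\partial X$ (escaping projection forces $a^{n_k}o\to f^{+}$, and comparing $a^{n_k+1}o$ seen two ways yields $af^{+}=f^{+}$); your version has the mild advantage of not invoking orbit discreteness, which can be delicate for parabolics in non-proper spaces, but the underlying idea is the same.
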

\begin{proof}[Proof of the Claim]
The (1) follows from Claim \ref{claim: find f} that $\ax(f)\ne a\ax(f)$ with $a\in A$. We now prove (2). We examine the following three cases according to the type of $a\in A$.
If $a$ is an elliptic isometry, then $\langle a\rangle o$ has bounded diameter and thus the bounded projection follows.

Assume now that $a$ is a loxodromic or parabolic  isometry. Since  $\ax(f)\ne a\ax(f)$ and $f$ is contained in $N(b,c)$, the fixed points of $a$ are disjoint with $f^\pm$. Since $\langle a\rangle o$ is a discrete orbit in $X$, the bounded projection of $\langle a\rangle o$ to $\ax(f)$ follows: the value of $\tau$ depends on $a$, but the set $a\in A$ is finite, so the (2) is proved.
\end{proof}

Since $f$ is loxodromic, we now take a large power so that $d(o,f^no)\gg \tau$ for some $n\gg 1$. In what follows, the power is raised as necessary and we write $f=f^n$ for simplicity. This implies that $a_1fa_2f\cdots a_m f$ for any $a_1, a_2, \cdots, a_m\in A\cup A^{-1}$ labels a $\lambda$-quasi-geodesic for some $\lambda$ depending on $\tau$. 

We are now ready to give the proof of the two items. 

\textit{(i)}.
Let $\tilde A$ denote the set of elements $a^{-1}b$ for $a\ne  b\in A$. The desired element $f$ is given by Claim \ref{claim: find f} applied to $\tilde A$.
It remains to show $\tilde A f$ generates a free semigroup. To seek a contradiction, for two distinct sequences $a_1, \cdots, a_n\in \tilde A$ and $a_1', \cdots, a_n'\in \tilde A$, assume 
$$a_1fa_2f\cdots a_m f= a_1'fa_2'f\cdots a_n'f$$ so that the two words on the left and right  sides  label two $c$-quasi-geodesics with the same endpoints. Without loss of generality, assume $a_1\ne a_1'$ and $d(o,a_1o)\ge d(o,a_1'o)$.  

Since $a_1'^{-1}a_1\in \tilde A$, the item {(1)} of Claim \ref{claim: bdd proj} implies that $[o,a_1'^{-1}a_1o]$  has $\tau$-bounded projection to $\ax(f)$. By Morse Lemma, $d(a_1'o,[o,a_1o])\le R$ for some $R=R(c)$.  These two facts imply  $|d(o,a_1o)-d(o,a_1'o)|\le D$ for a constant $D$ depending on $R,\tau$. However,  $a_1'^{-1}a_1\ax(f)$ also has $\tau$-bounded projection to $\ax(f)$ by Claim \ref{claim: bdd proj}(2). This   gives a contradiction   when $d(o,fo)$ is large. This shows that $Af$ generates a free semi-group.

\textit{(ii)}. The desired element $f$ is given by Claim \ref{claim: find f} for $A$.  
By Claim \ref{claim: bdd proj}(2), $\mathrm{diam}(\pi_{\ax(f)}[o,a^no])\le \tau$ for  $n\in\mathbb Z\setminus 0$, so the above argument proves that $\langle a, f\rangle$ is a free group of rank 2 for any $a\in A$.
\end{proof}

Let us  conclude this part by discussing the connection to the work of de la Harpe and his collaborators. Let us consider an action of $G$ by homeomorphism on a Hausdorff topological space $Z$.
\begin{enumerate}
    \item It is called \textit{strongly faithful} if for any finite set of non-trivial elements $F$ in $G$, there exists a point $\xi\in Z$ so that $f\xi\ne\xi$ for $f\in F$. 
    \item 
    A \textit{hyperbolic element}  admits north-south dynamics relative to its two fixed points. Two hyperbolic elements are called \textit{transverse} if their fixed points are disjoint. 
    \item 
    The action is called \textit{strongly hyperbolic} if $G$ contains at least a pair of transverse hyperbolic elements. 
\end{enumerate} 
It is proved that topological free action on $Z$ is strongly faithful (\cite[Corollary 10]{HJ11}). If a group admits a minimal, strongly faithful and strongly hyperbolic action, then the group is Powers' group (so it is $C^\ast$-simple)  and has free semi-group property \cite[Lemma 4.3]{DH99}. 

It is clear that a non-elementary action on a hyperbolic space $X$ induces a strongly hyperbolic action on $\partial X$. In the Claim \ref{claim: find f}, we proved that if the action is tamed and $E(G)=\{1\}$,   it  is strongly faithful. In view of this,  Proposition \ref{prop: free semgigroups}(i)  would then follow from \cite[Lemma 4.3]{DH99}.

\subsection{Paradoxical towers}
The notion of paradoxical towers is introduced in \cite{GGKN}.
\begin{defn}Let $n \in N$ be an integer. We say that a countable group $G$ admits \textit{$n$-paradoxical towers} if for every non-empty finite subset $D \subset G$, there are $A_1, \cdots, A_n \subset G$ and $g_1,\cdots, g_n\subset G$ such
that:
\begin{enumerate}
    \item 
    the sets $aA_i$, for each $a \in D$ and $1\le i\le n$ are pairwise disjoint,
    \item 
    $G =\cup_{i=1}^n g_iA_i$.
\end{enumerate}

We say that $G$ admits \textit{paradoxical towers} if there is $n \in\mathbb  N$ such that $G$ admits $n$-paradoxical towers.
\end{defn}

The following result in acylindrically hyperbolic groups is proved  \cite[Proposition 4.7]{GGKN}, which uses the fact that topologically free extreme boundary actions are {$2$-paradoxical towers} by \cite[Proposition 4.6]{GGKN}.
\begin{prop}\label{prop: paradoxical towers}
Assume that the non-elementary action $G\act X$ is tamed and $E(G)$ is trivial. Then $G$ admits {$3$-paradoxical towers}.    
\end{prop}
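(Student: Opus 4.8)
The strategy is to reduce Proposition~\ref{prop: paradoxical towers} to the machinery of Section~\ref{SSecProjectionComplex}. By Theorem~\ref{MainThm}, since the tamed action $G\act X$ passes to an acylindrical-type situation on a projection complex $\mathcal X=\p_K(\f)$ whose horofunction compactification $\overline{\mathcal X}_h$ carries the extreme boundary $\partial G$ built there, and since $E(G)$ being trivial is exactly the ``no nontrivial finite normal subgroup'' hypothesis, the $G$-action on $\partial G$ is a minimal, extreme proximal, \emph{topologically free} action on a compact metrizable space. (Strictly, Theorem~\ref{MainThm} is stated for acylindrically hyperbolic $G$; for a general tamed action one runs the construction of Proposition~\ref{prop: minimal loxodromic elements} and Lemma~\ref{lem: minimal loxodromic with ns dynamics} with the loxodromic elements supplied by the tamed pair $(b,c)$ of Definition~\ref{defn: tamed action}, exactly as in Claim~\ref{claim: find f}, to produce a loxodromic $g$ with minimal fixed points and north--south dynamics on $\overline{\mathcal X}_h$, and then freeness follows from Lemma~\ref{lem: local minimum map injective Myrberg} together with the Myrberg free-point argument using $E(G)=\{1\}$.)

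\textbf{Key steps.} First I would record that a topologically free, minimal, extreme proximal action of a countable group on a compact Hausdorff space is $2$-paradoxical in the sense of \cite[Proposition 4.6]{GGKN}: given a finite $D\subset G$, one uses topological freeness to pick a free point $\xi_0$, shrinks to an open set $O$ small enough that $\{aO:a\in D\}$ are pairwise disjoint (possible since distinct $a\xi_0$ are distinct and $Z$ is Hausdorff), and then uses extreme proximality to find $g_1,g_2$ with $g_i^{-1}(Z\setminus O_i)$ covering $Z$ for a suitable further subdivision — this is precisely the content cited from \cite{GGKN}. Second, I transfer this dynamical paradoxicality to \emph{group-theoretic} paradoxical towers: fixing a base point $o\in\mathcal X$ and the orbit map $g\mapsto g\xi_0\in\partial G$, the sets $A_i:=\{g\in G: g\xi_0\in U_i\}$ for an appropriate open cover/partition $\{U_i\}$ of $\partial G$ pull back the disjointness in (1) and the covering in (2) of Definition~\ref{defn: tamed action}'s target definition to $G$; the point is that $aA_i=\{g: g\xi_0\in a U_i\}$, so pairwise disjointness of $\{aU_i\}$ over $a\in D$ gives (1), and $Z=\bigcup g_i U_i$ gives $G=\bigcup g_iA_i$ after possibly passing from $2$ to $3$ towers to absorb the boundary pieces where the orbit map is not injective. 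This is why the statement gives $3$ rather than $2$.

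\textbf{Main obstacle.} The genuine work is the first step — showing that a topologically free extreme proximal action is $2$-paradoxical — but this is quoted as \cite[Proposition 4.6]{GGKN}, so within this paper the proof is short. Given that, the only real point to be careful about is the passage from the compact-space statement to the abstract group statement: one must check that the pull-back sets $A_i$ genuinely witness Definition's conditions \emph{for every} finite $D$, uniformly in the chosen free point and open sets, and that the orbit map's failure of injectivity does not destroy condition~(1) — this is handled by choosing the $U_i$ to be an honest partition of $\partial G$ into Borel (indeed clopen, after shrinking) pieces and noting that $g\xi_0$ lands in exactly one $U_i$. So the plan is: (a) invoke topological freeness + minimality + extreme proximality of $G\act\partial G$ from Theorem~\ref{MainThm} (extended to tamed actions as above); (b) apply \cite[Proposition 4.6]{GGKN} to get a $2$-paradoxical dynamical decomposition of $\partial G$ relative to any finite $D\subset G$; (c) pull back along an orbit map to $G$, spending one extra tower to make the pull-back covering work, yielding $3$-paradoxical towers. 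The expected hard part is purely bookkeeping in step (c); no new geometry is needed beyond what Section~\ref{SSecProjectionComplex} already provides.
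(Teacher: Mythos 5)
Your route is genuinely different from the paper's, and it has a real gap. You propose to deduce the result from Theorem~\ref{MainThm} (topologically free extreme boundary) plus \cite[Proposition 4.6]{GGKN}. The paper itself remarks that this deduction is available \emph{for acylindrically hyperbolic groups}, but Proposition~\ref{prop: paradoxical towers} is stated for the strictly broader class of \emph{tamed} non-elementary actions with trivial $E(G)$, and the entire point of Section~5 is to give a proof using only the elementary material of \textsection\ref{sec: limit set and Myrberg}, precisely because the machinery behind Theorem~\ref{MainThm} is not available in that generality. Your parenthetical claim that one can ``run the construction of Proposition~\ref{prop: minimal loxodromic elements} \ldots for a general tamed action'' is where the argument breaks: the projection complex requires the family of translated axes to satisfy the projection axioms (which the paper invokes as ``well-known'' only under acylindricity), and the topological-freeness step in the proof of Theorem~\ref{MainThm} uses \cite[Theorem 6.14]{DGO} (existence of a loxodromic $f$ with $E(f)=\langle f\rangle$) and the absence of parabolic isometries — both facts specific to acylindrical actions. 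None of this is established for tamed actions, so step (a) of your plan is unproved. Your step (c) is also off target: \cite[Proposition 4.6]{GGKN} already produces group-theoretic paradoxical towers (the definition is about subsets of $G$, not of the boundary), so no orbit-map pull-back is needed, and $2$-paradoxical trivially implies $3$-paradoxical by adjoining $A_3=\emptyset$; the ``$3$'' in the statement does not come from absorbing non-injectivity of an orbit map.

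For contrast, the paper's proof is direct and avoids boundaries entirely: it fixes three weakly independent loxodromic elements $F=\{f_1,f_2,f_3\}$, applies the strong-faithfulness Claim~\ref{claim: find f} to $D\cup D^{-1}D$ to obtain $f_0$ with $af_0^\pm\cap f_0^\pm=\emptyset$ and $a^{-1}a'f_0^\pm\cap f_0^\pm=\emptyset$, and then uses the Extension Lemma~\ref{extend3} to attach to each $b\in G$ an element $f_b\in F$ making $a\cdot f_0\cdot f_b\cdot b$ a quasi-geodesic. An injectivity claim (distinct $(a,b)$ give distinct products) yields the disjointness condition, and setting $A_i=\{f_0f_bb: f_b=f_i\}$, $g_i=f_i^{-1}f_0^{-1}$ gives the covering $G=\bigcup_i g_iA_i$; the ``$3$'' is exactly the cardinality of $F$. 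If you want to salvage your approach, you would either have to restrict the proposition to acylindrically hyperbolic groups, or supply a proof of a topologically free extreme boundary for arbitrary tamed actions — neither of which matches the statement as given.
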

\begin{proof}
Fix a set  $F=\{f_1,f_2,f_3\}$ of three weakly independent loxodromic elements.
Let $D$ be a finite set of elements in $G$. Apply Claim \ref{claim: find f} to the union of $D$ and $D^{-1}D$. Then there exists $f_0\in G$ so that $a f_0^\pm\cap f_0^\pm=\emptyset$ and $a^{-1}a'f_0^\pm\cap f_0^\pm=\emptyset$ for any $a\ne a'\in D$. 

Let $c>1$   be given by Lemma \ref{extend3} for the set $F$. Given $a\in D$ and $b\in G$, there exists $f_{b}\in F$ so that the path labeled by the word $a\cdot f_0\cdot f_{b} \cdot b$ :  $$[o,ao]a[o,f_0o] af_0[o,f_{b}o]af_0f_{b}[o,bo]$$  is a $c$-quasi-geodesic.
\begin{claim}
If $a\ne a'$ and $b\ne b'$, then $a\cdot f_0\cdot f_{b}\cdot b\ne a' \cdot f_0 \cdot f_{b'}\cdot b'$.    
\end{claim}
\begin{proof}[Proof of the claim]
Assume first that $a=a'$ and $f_{b}\ne f_{b'}$; otherwise there is nothing to do. Thus, their axis of $f_b$ and $f_{b'}$ have bounded projection, so $b^{-1}f_{b}^{-1}f_{b'}b'$ labels a $c$-quasi-geodesic $\gamma$. Taking a larger power of $f\in F$ so that $d(o,f_b^no)\gg 0$, this gives a contradiction with the fact that $\gamma$ has the same endpoints.  

Assume now that $a\ne a'$. Since $a^{-1}a'f_0^\pm\cap f_0^\pm=\emptyset$,  the word  $b^{-1}f_{b}^{-1}(a^{-1} a')f_0f_{b'}b'$ labels a $c$-quasi-geodesic $\gamma$, which is a loop. A similar argument as above concludes the proof in this case.
\end{proof}

Let $A_1$ be the set of elements $f_0 f_{b}b$ with $b\in G$ so that $f_{b}=f_1$. $A_2$ and $A_3$ are similarly defined. By the claim, $aA_1,aA_2,aA_3$ for every $a\in D$ are all disjoint. Let $g_i=f_i^{-1}f_0^{-1}$ with $1\le i\le 3$. Then $G=g_1 A_1\cup g_2A_2\cup g_3A_3$.
\end{proof}



\bibliographystyle{amsplain}
 \bibliography{bibliography}

\end{document}